\def\CC {{\mathbb F}}     
\def\FF {{\mathbb F}}     
\def\NN {{\mathbb N}}     
\def\PP {{\mathbb P}}     
\def\XX {{\mathbb X}}     
\def\mc {\mathcal}
\def\mk {\mathfrak}
\def\ol  {\overline}
\def\rw  {\rightarrow}
\def\ul  {\underline}
\DeclareMathOperator{\Hom}{\mathrm{Hom}}\DeclareMathOperator{\Ima}{\mathrm{Im}}\DeclareMathOperator{\Sp}{\it{Sp}}\DeclareMathOperator{\T}{{\it T}}\DeclareMathOperator{\SO}{\mathrm{SO}}\DeclareMathOperator{\codim}{\mathrm{codim}}\DeclareMathOperator{\Supp}{\mathrm{Supp}}\DeclareMathOperator{\rk}{\mathrm{rk}}
\newcounter{tAP}\setcounter{tAP}{0}
\newcommand{\APbr}[1]{\langle#1\rangle}
\theoremstyle{definition}
\newtheorem{theorem}{Theorem}[section]
\newtheorem{lemma}[theorem]{Lemma}
\newtheorem{prop}[theorem]{Proposition}
\newtheorem{coro}[theorem]{Corollary}
\newtheorem{rem}{Remark}[section]
\newtheorem{df}{Definition}[section]
\newcounter{AP}
\begin{document}

\begin{center}

\large{\bf HOMOGENEOUS PROJECTIVE VARIETIES}\\

\large{\bf WITH SEMI-CONTINUOUS RANK FUNCTION}\\

\vspace{0.5cm}

\large{\textsc{A. V. Petukhov \footnote{Institute for Information Transmission Problems, Bol'shoy Karetniy 19, Moscow, Russia.\\ Email: alex-~\!\!-~2@yandex.ru.} and V. V. Tsanov \footnote{Fakult\"at f\"ur Mathematik, Ruhr-Universit\"at Bochum, Bochum, Germany.\\ Email: valdemar.tsanov@gmail.com.}}}\\

\vspace{0.5cm}

\begin{abstract}Let $\XX\subset\PP(V)$ be a projective variety, which is not contained in a hyperplane. Then
every vector $v$ in $V$ may be written as a sum of vectors from the
affine cone over $\XX$. The minimal number of summands in such a
sum is called the rank of $v$. In this paper, we classify
all equivariantly embedded homogeneous projective varieties
$\XX\subset\PP(V)$ whose rank function is lower semi-continuous. Classical examples
are: the variety of rank one matrices (Segre variety with two
factors) and the variety of rank one quadratic forms (quadratic
Veronese variety). In the general setting, $\XX$ is the orbit in
$\PP(V)$ of a highest weight line in an irreducible representation
$V$ of a reductive algebraic group $G$. Thus, our result is a list
of all irreducible representations of reductive groups, for which the corresponding rank function is lower semi-continuous.
\end{abstract}

\end{center}

\small{
\tableofcontents
}

\section{Introduction}\label{Sintro}

Let $V$ be a finite dimensional vector space over an algebraically closed field $\FF$ of characteristic $0$. Let $\XX\subset\PP(V)$ be a projective variety and $X\subset V$ be the affine cone over $\XX$. Suppose that $\XX$ is nondegenerate, i.e. it is not contained in a hyperplane. Then every vector $v\in V$ can be written as a linear combination of points of $X$ and we have a well-defined function ${\rm rk}:\PP(V)\to\NN$, called {\it rank} (or $\XX$-{\it rank}), given by
$$
{\rm rk}[v]={\rm rk}_\XX[v] = \min\{r\in\NN: v=x_1+...+x_r,\; {\rm with}\; x_j\in X\} \;,
$$
where $[v]\in\PP(V)$ denotes the projective point corresponding to a non-zero vector $v$. The {\it rank sets} in $\PP(V)$ with respect to $\XX$ are defined as
$$
\XX_r = \{[v]\in\PP(V): {\rm rk}[v]=r\} \;.
$$
The {\it secant varieties} of $\XX$ are defined as the Zariski closure
$$
\sigma_r(\XX) = \ol{\bigcup_{s\leq r}\XX_s} \;.
$$
The {\it border rank} of $[v]\in\PP(V)$ is defined as
$$
\ul{\rm rk}[v]=\ul{\rm rk}_\XX[v] = \min\{r\in\NN:[v]\in\sigma_r(\XX)\} \;.
$$
Let ${\rm Aut}(\XX)\subset SL(V)$ denote the group of linear automorphisms of $\XX$. Then rank and border rank are ${\rm Aut}(\XX)$-invariant, and hence ${\rm Aut}(\XX)$ acts on $\XX_r$ and $\sigma_r(\XX)$.

\begin{df}
We call $\XX$ {\it rs-continuous} if ${\rm rk}_\XX$ is a lower semi-continuous function on $\PP(V)$, i.e. if rank and border rank defined by $X$ coincide. Otherwise, we say that $\XX$ is {\it r-discontinuous}. We call $[v]\in\PP(V)$ {\it exceptional} if ${\rm rk}_\XX[v]\ne\ul{\rm rk}_\XX[v]$.
\end{df}

Our goal is to classify all rs-continuous varieties belonging to a certain class, namely, the homogeneous rs-continuous varieties. By a homogeneous projective variety we mean an equivariantly embedded variety $\XX\subset\PP(V)$ whose automorphism group is transitive. In such a case the (linear) automorphism group $G$ is always semisimple and $\XX\cong G/P$, where $P$ is a parabolic subgroup of $G$. In other words, $\XX$ is a flag variety of $G$. Furthermore, when the embedding is nondegenerate, $V$ is an irreducible $G$-module, and so it is determined up to isomorphism by its highest weight, say $\lambda$, so that $V\cong V(\lambda)$. Then $\XX$ can be viewed as the orbit of a highest weight line $\XX=G[v^\lambda]\subset\PP(V)$. Conversely, if $G$ is a semisimple algebraic group and $V=V(\lambda)$ is an irreducible $G$-module, then $\PP(V)$ contains a unique closed $G$-orbit --- the orbit of a highest weight line; we denote this orbit by $\XX(G,V)$. It is not always true that $G$ is the full automorphism 
group
of $\XX(G, V)$. For instance, the symplectic group $Sp_{2n}$ acts transitively on the projective space $\PP(\CC^{2n})$, but the full linear automorphism group is $SL_{2n}$. Our approach is to classify all irreducible representations $(G,V)$ such that $\XX(G,V)$ is rs-continuous. Then the list of rs-continuous homogeneous projective varieties is obtained by dropping the redundancies. For brevity of expression, we shall call a representation $(G,V)$ {\it rs-continuous}, if the corresponding variety $\XX(G,V)$ is rs-continuous.

Before stating our classification theorem, let us mention some classical examples, where the terminology stems from.

Let $V=\CC^m\otimes\CC^n$ be the space of $m\times n$-matrices. On $V$ we have the classical notion of rank of a matrix. Let $\XX\subset \PP(V)$ denote the variety of matrices of rank 1; $\XX$ can be defined by the vanishing of all $2\times 2$-minors and is also known as the Segre variety of simple tensors $\XX={\rm Segre}(\PP^{m-1}\times\PP^{n-1})$. It is well-known that every matrix of rank $r$ can be written as a sum of $r$ matrices of rank 1. The set of matrices of rank $r$ or less is the common zero-locus of all $(r+1)\times(r+1)$-minors and hence is a closed set, which equals the secant variety $\sigma_r(\XX)$. Hence $\XX$ is rs-continuous. The automorphism group of $\XX$ equals $PSL_m\times PSL_n$ in this case.

A similar situation occurs for symmetric and for skew symmetric matrices. The corresponding varieties are the quadratic Veronese embedding ${\rm Ver}_2(\PP^{n-1})\subset\PP(S^2\CC^n)$ and the Grassmannian of planes ${\rm Gr}_2(\CC^n)\subset\PP(\Lambda^2\CC^n)$. In both cases we have a linear action of $SL_n$ preserving $\XX$ and yielding the full automorphism group of $\XX$. Both varieties are rs-continuous.

Perhaps the simplest r-discontinuous homogeneous variety is the twisted cubic curve: $\XX={\rm Ver}_3(\PP^1)\subset\PP(S^3\CC^2)$. The respective automorphism group $G=PSL_2$ has three orbits in $\PP(S^3\CC^2)$, which can be written as $G[x^3]$, $G[x^2y]$ and $G[x^3+y^3]$, where $x,y$ is an arbitrary basis of $\CC^2$. Indeed, an element of $\PP(S^3\CC^2)$ can be written as a product $[l_1l_2l_3]$, with $[l_j]\in\PP^1$, and there are three possibilities: either $[l_1]=[l_2]=[l_3]$, or $[l_1]=[l_2]\ne [l_3]$, or all three are distinct. Since $PSL_2$ acts transitively on triples of distinct points on $\PP^1$, we have three orbits in $\PP(S^3\CC^2)$. It is easy to see that the ranks are $1$, $3$ and $2$, respectively. However, we have $\sigma_2(\XX)=\ol{G[x^3+y^3]}=\PP(S^3\CC^3)$ and hence $[x^2y]$ is exceptional and $\XX$ is r-discontinuous.

Let us notice that, in the above example, $\ol{G[x^2y]}$ is the tangential variety of $\XX$, which is clearly contained in the secant variety $\sigma_2(\XX)$. In fact it is a general phenomenon, that exceptional points do appear in the tangential variety, whenever they exist. This fact is in the basis of our methods.

Now, we formulate our main result.

\begin{theorem}\label{Tkltame} Let $G$ be a semisimple algebraic group and $V$ be a finite dimensional irreducible $G$-module, with $\dim V\ge2$. Then the closed $G$-orbit $\XX(G, V)\subset\PP(V)$ is rs-continuous if and only if the pair $(G, V)$ appears in the following table.

\begin{equation}\begin{tabular}{|c|}\hline$\begin{tabular}{c|c|c}Group
$G$&Representation $V$&Highest weight of $V$\\\end{tabular}$\\
\hline Simple classical groups\\\hline
$\begin{tabular}{c|c|c}$SL_n$&$\begin{tabular}{c}$\CC^n, (\CC^n)^*, (\Lambda^2\CC^n), (\Lambda^2\CC^n)^*,$\\ S$^2\CC^n$, (S$^2\CC^n)^*$, $\mk{sl}_n$ \end{tabular}$&$\begin{tabular}{c}$\pi_1$, $\pi_{n-1}$, $\pi_2$, $\pi_{n-2}$,\\ $2\pi_1$, $2\pi_{n-1}$, $\pi_1+\pi_{n-1}$\end{tabular}$\\\hline$SO_n$&$\CC^n, RSpin_n(n\le10)$&$\begin{tabular}{c}$\pi_1$, $\pi_{\frac n2} (2\mid n)$, $\pi_{\frac{n}{2}-1}(2\mid n)$,\\ $\pi_{\frac{n-1}2} (2\nmid n)$\end{tabular}$\\\hline$Sp_{2n}$&$\CC^{2n}, \Lambda^2_0\CC^{2n},$ S$^2\CC^{2n}\cong\frak{sp}_{2n}$&$\pi_1, \pi_2, 2\pi_1$\\\end{tabular}$\\
\hline Simple exceptional groups\\\hline
$\begin{tabular}{c|c|c}$E_6$&$\CC^{27}, (\CC^{27})^*$&$\pi_1,
\pi_5$\\\hline$F_4$&$\CC^{26}$&$\pi_1$\\\hline$G_2$&$\CC^7$&$\pi_1$\\\end{tabular}$\\
\hline Non-simple groups\\\hline $\begin{tabular}{c|c|c}$SL_m\times
SL_n$&$\CC^m\otimes\CC^n$&$\pi_1\oplus\pi_1$\\\hline$SL_m\times
Sp_{2n}$&$\CC^m\otimes\CC^{2n}$&$\pi_1\oplus\pi_1$\\\hline
$Sp_{2m}\times
Sp_{2n}$&$\CC^{2m}\otimes\CC^{2n}$&$\pi_1\oplus\pi_1$\\\end{tabular}$\\\hline
\end{tabular}\hspace{10pt},\label{T1}\end{equation}
where by $RSpin_n$ we denote (any) spinor representation of the
simply connected cover of $SO_n$, by $\Lambda^2_0\CC^{2n}$ we denote
the second fundamental representation of $Sp_{2n}$ (which is
identified with a hyperplane in $\Lambda^2\CC^{2n}$), by $\CC^{27}$
we denote any one of the two smallest fundamental representations of
$E_6$, by $\CC^{26}$ we denote the smallest fundamental
representation of $F_4$, by $\CC^7$ we denote the smallest
fundamental representation of $G_2$. In the third column we list
the highest weight of $V$ with respect to $G'$ (we use here and
throughout the paper the numbering convention
of~\cite[p. 294]{VO}).

Moreover, if $(G,V)$ is r-discontinuous, then it contains an exceptional vector of border rank 2.
\end{theorem}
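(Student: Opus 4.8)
The plan is to establish the equivalence by proving the two implications separately --- each pair in the table is tame, and each pair outside the table is wild --- and to note that the ``moreover'' assertion comes for free, since in the second implication one in fact produces an exceptional vector of border rank $2$. Throughout one first reduces to the case where $G=G'$ is semisimple and simply connected: the centre of $G$ acts by scalars and hence affects neither $\XX(G,V)$ nor the rank function, and a simple module over a semisimple group is an external tensor product $V=V_1\boxtimes\cdots\boxtimes V_k$ over the simple factors $G_i$, with $\XX(G,V)$ the Segre product of the $\XX(G_i,V_i)$ and affine cone $X(G,V)=\{x_1\otimes\cdots\otimes x_k:x_i\in X(G_i,V_i)\}$. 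One also records the description $\wh T_{[v_\lambda]}\XX=\FF v_\lambda\oplus\mathfrak{n}^-v_\lambda$ of the affine tangent space at the highest weight line, together with the classical fact that for a smooth variety a point of $\sigma_2(\XX)$ lying on no secant line lies on a tangent line; consequently $\XX(G,V)$ has an exceptional point of border rank $2$ if and only if, for some (equivalently any) $p\in\XX$, the direction $\wh T_p\XX$ contains a vector of $\XX$-rank $\geq3$.

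\emph{Tameness of the listed pairs.} For the ``determinantal'' families $\CC^m\otimes\CC^n$, $S^2\CC^n$, $\Lambda^2\CC^n$ (and their twisted versions $\CC^m\otimes\CC^{2n}$, $\CC^{2m}\otimes\CC^{2n}$, where the symplectic factors act transitively on the projective space so that the closed orbit is the full Segre variety), as well as $\mathfrak{sl}_n=\CC^n\otimes(\CC^n)^*$ and $\mathfrak{sp}_{2n}=S^2\CC^{2n}$, one checks that $\rk_\XX[v]$ coincides with the relevant classical rank (matrix rank; ordinary rank for quadratic forms, using that the field is algebraically closed of characteristic $0$; half the rank for alternating forms; and so on), so that $\{\rk_\XX\leq r\}$ is the corresponding determinantal variety, hence closed; equivalently these are Scorza varieties and Zak's theorem applies. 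For the remaining small cases --- $\CC^n$ with $SO_n$ (a quadric), $\CC^{2n}$ with $Sp_{2n}$ (all of $\PP(V)$), the spinor modules of $SO_n$ for $n\leq10$, and $\CC^{27},\CC^{26},\CC^7$ for $E_6,F_4,G_2$ --- the generic rank is $1$, $2$ or $3$; one verifies directly that $\sigma_2(\XX)$ is $\PP(V)$ or the unique invariant hypersurface, that $\sigma_3(\XX)=\PP(V)$, and that every point of $\PP(V)$ has rank $\leq r_g$, using the explicit geometry of these homogeneous spaces (the Severi/Scorza picture for the Cayley plane $\CC^{27}$, and the analogous descriptions for the spinor and $F_4,G_2$ modules). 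In each case $\sigma_r(\XX)=\bigsqcup_{s\leq r}\XX_s$ for all $r$.

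\emph{Wildness of the unlisted pairs.} The engine is a heredity principle: if $H\leq G$, if $V'\subseteq V$ is an $H$-submodule with $\XX(H,V')\subseteq\XX(G,V)$, and if there is a linear map $p\colon V\to V'$ with $p|_{V'}=\mathrm{id}$ and $p(X(G,V))\subseteq X(H,V')$, then applying $p$ to a decomposition gives $\rk_{\XX(G,V)}[v]\geq\rk_{\XX(H,V')}[v]$ for $v\in V'$, while $\sigma_2(\XX(H,V'))\subseteq\sigma_2(\XX(G,V))$ gives $\ul{\rk}_{\XX(G,V)}[v]\leq\ul{\rk}_{\XX(H,V')}[v]$; thus a border-rank-$2$ exceptional vector for $(H,V')$ remains one for $(G,V)$. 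It therefore suffices to (i) fix a short list of ``seed'' wild pairs each carrying an explicit border-rank-$2$ exceptional tangent vector --- e.g. binary cubics $S^3\CC^2$ with $x^2y=\lim_{\epsilon\to0}\tfrac{1}{3\epsilon}\bigl((x+\epsilon y)^3-x^3\bigr)$ (rank $3$); $\CC^2\otimes\CC^2\otimes\CC^2$ with the $W$-vector $e_0\otimes e_0\otimes e_1+e_0\otimes e_1\otimes e_0+e_1\otimes e_0\otimes e_0$; $\Lambda^3\CC^6$ ($Gr(3,6)$) with a suitable tangent vector; the spinor module of $SO_{11}$; the fundamental module $\Lambda^3_0\CC^6$ of $Sp_6$; the adjoint module of $G_2$; and a few further seeds built analogously --- and (ii) show that every pair $(G',V)$ absent from the table admits a projection of the above type onto one of the seeds. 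For (ii) one runs through the simple groups and their dominant weights: a $k$-fold tensor product with $k\geq3$ factors projects (kill all but three factors, keeping each remaining factor's closed orbit, or restricting it to a line) onto a three-factor Segre seed; a weight with some $\langle\lambda,\alpha^\vee\rangle\geq3$, or supported on $\geq2$ simple roots, contains a Veronese/cubic or a higher Grassmannian/spinor substructure; the adjoint modules of all simple groups other than $SL_n$ and $Sp_{2n}$ likewise map onto a seed; and mixed products such as $SL_m\times SO_n$ or $SO_m\times SO_n$ are dispatched by a direct argument (a rank-$2$ matrix whose column space meets the relevant subvariety of the closed orbit --- e.g. an isotropic quadric --- in too few points cannot be written as a sum of two points of that orbit).

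\emph{The main obstacle.} The delicate part is the rank lower bound in the wildness step, together with the bookkeeping that no wild pair escapes. Exhibiting, for each seed and for each ``near-miss'' family ($\Lambda^3\CC^6$, the $SO_{11}$ spinors, the $Sp_6$ module $\Lambda^3_0\CC^6$, the exceptional adjoints), a tangent vector of $\XX$-rank $\geq3$ requires proving that no expression as a sum of two points of the closed orbit exists; since membership in $\sigma_2$ is cut out by the ``expected'' equations while rank $\leq2$ is strictly stronger, this rests on the local geometry and equations of $\XX$ near $[v_\lambda]$ and must be argued case by case, or via an invariant-theoretic obstruction separating $\sigma_2(\XX)$ from $\XX_1\sqcup\XX_2$. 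Arranging the projections so that finitely many seeds cover every pair outside the table --- in particular pinning down the exact boundary of each classical family ($Gr(2,n)$ tame but $Gr(3,n)$ wild for all $n\geq6$, spinors of $SO_n$ tame iff $n\leq10$, $\Lambda^2_0\CC^{2n}$ tame for all $n$) --- is the most laborious portion and is where the precise inclusions and exclusions of the table are actually established. By contrast the tameness step is mainly an assembly of known results, the only genuinely case-specific verifications being the spinor modules of $SO_9,SO_{10}$ and the exceptional modules, where one must confirm that passing to the Zariski closure creates no vectors of higher rank.
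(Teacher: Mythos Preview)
Your overall architecture matches the paper's: verify tameness directly for each entry of the table, and derive wildness by hereditary reduction from a finite list of ``seed'' representations, each possessing a tangent (hence border-rank-$2$) vector of rank $\geq 3$. Your heredity principle is precisely Propositions~\ref{Predl} and~\ref{Prop Levi Reduction} specialised to Levi subgroups (with the small correction that one needs $p(X(G,V))\subseteq X(H,V')\cup\{0\}$, since the projection may kill some highest weight vectors). The tameness verifications you list are also the paper's, though two of them --- $\Lambda^2_0\CC^{2n}$ for $Sp_{2n}$ and $V(\pi_1)$ for $F_4$ --- are not instances of the Scorza/determinantal picture and require the substantive arguments of Lemma~\ref{Psppi2} and Subsection~\ref{SSf4t}.

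There is, however, a genuine gap in your wildness bookkeeping. The paper organises the analysis by the height $h(\lambda)=\sum_i f_\lambda(\alpha_i)$: the range $h(\lambda)\geq 3$ is handled uniformly (Proposition~\ref{Lh3}) via a tangent vector in the Cartan component of a triple tensor product, and the key device for $h(\lambda)=2$ is \emph{HW-density} (Definition~\ref{Dhwd}, Proposition~\ref{Lh2}, Corollary~\ref{Cfew}): writing $\lambda=\lambda_1+\lambda_2$, $2$-tameness forces $G$ to act transitively on both $\PP(V(\lambda_i))$, which immediately pins the $h=2$ list down to the six families of Proposition~\ref{Llist}. Your criteria (``some $\langle\lambda,\alpha^\vee\rangle\geq 3$, or supported on $\geq 2$ simple roots'') together with chopping to seeds do \emph{not} cover this: for instance $2\pi_2$ of $SL_4$ and $\pi_1+\pi_3$ of $SL_5$ are wild, yet every proper Levi chopping of either one is tame (one lands on $S^2\CC^3$, $\Lambda^2\CC^4$, $\CC^2\otimes\CC^3$, the adjoint of $SL_4$, or a trivial module), so no seed on your list is ever reached by your projections. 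The HW-density argument, or an equivalent, is indispensable here. Among the fundamental representations, two further seeds must be added to your list because, again, all of their choppings are tame: $V(\pi_1)$ for $E_7$ (the $56$-dimensional module; every single-vertex chopping yields $\pi_1$ of $E_6$, $D_6$ or $A_6$, all tame) and $V(\pi_2)$ for $F_4$. The paper devotes Subsections~\ref{SSf4w} and~\ref{SSSE7} to bespoke wildness proofs for these two, and your clause ``a few further seeds built analogously'' does not anticipate the nature of the arguments required --- a nilpotent-orbit dimension count inside $\mk e_8$ for the $E_7$ case, and a rank/signature invariant on $\Lambda^2 V(\pi_1)$ for the $F_4$ case.
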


We give a proof of Theorem~\ref{Tkltame} in Section~\ref{SprT}. As a corollary, we obtain the list of homogeneous projective varieties given below. The list of varieties is shorter,
because, as mentioned above, in certain cases there are subgroups of the automorphism group of the variety acting transitively.

\begin{coro}\label{Coro ListTameVar}
The rs-continuous projective varieties $\XX\subset\PP(V)$ with transitive linear automorphism group are the following:
\begin{center}
\begin{tabular}{|c|c|c|c|}
\hline
Notation for $\XX$ & Ambient~$\PP(V)$ & Group $G$& Max rk$_\XX$ \\
\hline
$\PP(\FF^n)$ &$\PP(\FF^n)$& $SL_n$ & $1$ \\
\hline
${\rm Ver}_2(\PP(\FF^n))$&$\PP({\rm S}^2\FF^n)$ & $SL_n$ & $n$ \\
\hline
Gr$_2(\FF^n)$&$\PP(\Lambda^2\FF^n)$ & $SL_n$ & $\lfloor\frac{n}{2}\rfloor$ \\
\hline
Fl$(1,n-1; \FF^n)$&$\PP(\mk{sl}_n)$ & $SL_n$ & $n$ \\
\hline
Q$^{n-2}$&$\PP(\FF^n)$ & $SO_n$ & $2$ \\
\hline
S$^{10}$&$\PP(\FF^{16})$ & $Spin_{10}$ & $2$ \\
\hline
Gr$_{\omega}(2,\FF^{2n})$&$\PP(\Lambda_0^2\FF^{2n})$ & $Sp_{2n}$ & n \\
\hline
E$^{16}$&$\PP(\FF^{27})$ & $E_6$ & $3$ \\
\hline
F$^{15}$&$\PP(\FF^{26})$ & $F_4$ & $3$ \\
\hline
${\rm Segre}(\PP(\FF^m)\times\PP(\FF^n))$&$\PP(\FF^m\otimes\FF^n)$ & $SL_m\times SL_n$ & $\min\{m,n\}$ \\
\hline
\end{tabular}
\end{center}
\end{coro}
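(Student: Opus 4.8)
The plan is to deduce Corollary~\ref{Coro ListTameVar} from Theorem~\ref{Tkltame} in three steps: reduce an arbitrary tame nondegenerate $\XX\subset\PP(V)$ with transitive linear automorphism group to the setting of the theorem; translate the rows of Table~(\ref{T1}) into concrete projective varieties, amalgamating those rows that yield the same variety; and read off the maximal rank in each case.

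For the reduction, put $G=\mathrm{Aut}(\XX)^\circ$, where $\mathrm{Aut}(\XX)=\{g\in GL(V):g\XX=\XX\}$ acts on $\XX\subset\PP(V)$. Since $\XX$ is connected and $\mathrm{Aut}(\XX)/G$ is finite, $G$ is still transitive, so $\XX$ is a single $G$-orbit and, being complete, is closed. The first point is that $G$ is reductive: if $U=R_u(G)\ne1$ then $U$, being unipotent, has a fixed point on the complete variety $\XX$, and since $U\lhd G$ the fixed locus $\XX^U$ is $G$-stable, hence equals $\XX$; thus $U$ fixes every line $\FF v$ with $[v]\in\XX$, and a unipotent group acts trivially on a one-dimensional space, so $U$ fixes a spanning set of vectors and therefore $U=1$. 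The second point is that $V$ is simple: $\XX$ contains a Borel-fixed point $[v_0]$, so $v_0$ is a highest weight vector, the $G$-submodule it generates is irreducible (characteristic zero), and its span is $V$ by nondegeneracy. Hence $\XX=\XX(G,V)$; since $\XX$ is tame, Theorem~\ref{Tkltame} forces $(G',V)$ into Table~(\ref{T1}). Conversely every variety in the corollary is the closed orbit $\XX(G,V)$ of a pair in Table~(\ref{T1}) --- hence tame --- and the displayed group acts linearly on $V$ and transitively on $\XX$.

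The core of the argument is a case-by-case identification of $\XX(G',V)$ for the rows of Table~(\ref{T1}); the closed orbit depends only on $G'$. Thus $SL_n$ on $\FF^n$ or $(\FF^n)^*$, and $Sp_{2n}$ on $\FF^{2n}$, give all of projective space; $SL_n$ on $S^2\FF^n$ or its dual, and $Sp_{2n}$ on $S^2\FF^{2n}=\mk{sp}_{2n}$, give the quadratic Veronese; $SL_n$ on $\Lambda^2\FF^n$ or $(\Lambda^2\FF^n)^*\cong\Lambda^{n-2}\FF^n$ gives $\mathrm{Gr}_2(\FF^n)$, the duality $W\mapsto W^\perp$ matching the two Plücker embeddings; $SL_n$ on $\mk{sl}_n$ gives $\mathrm{Fl}(1,n-1;\FF^n)$; $SO_n$ on $\FF^n$ gives the quadric $Q^{n-2}$; $E_6$ on either $27$-dimensional module gives $\mathbb E^{16}$; $F_4$ on $\FF^{26}$ gives $\mathbb F^{15}$; and the three pairs $SL_m\times SL_n$, $SL_m\times Sp_{2n}$, $Sp_{2m}\times Sp_{2n}$ all give the Segre variety of the two factors, whose full linear automorphism group is the product of the corresponding special linear groups. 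The remaining rows produce already-listed varieties through low-rank coincidences: $G_2\subset SO_7$ shows its closed orbit in $\PP^6$ is $Q^5$; and for $n\le10$ the spinor modules of $Spin_n$ give, by a dimension count combined with the fact that the closed $Spin_n$-orbit lies inside a closed orbit of equal dimension of a larger classical group, either a quadric ($n=7,8$ via $Spin_7,Spin_8\subset SO_8$), the spinor tenfold $\SS^{10}$ ($n=9,10$, using $Spin_9\subset Spin_{10}$ for $n=9$), or a projective space ($n=3,5,6$, where $Spin_3=SL_2$, $Spin_5=Sp_4$, $Spin_6=SL_4$). After these mergings exactly the ten varieties of the corollary remain, each with transitive group as displayed.

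Finally, tameness gives $\rk_\XX=\ul{\rk}_\XX$ identically, so every rank is at most the typical rank $r_g$, which is attained densely; hence $\max\rk_\XX=r_g$, and it remains to compute $r_g$. Most cases are classical: $r_g=1$ for $\PP^{n-1}$; $r_g=n$ for the Veronese $\mathrm{Ver}_2(\PP^{n-1})$ (rank of a quadratic form); $r_g=\min\{m,n\}$ for the Segre (matrix rank); $r_g=\lfloor n/2\rfloor$ for $\mathrm{Gr}_2(\FF^n)$ (half the Pfaffian rank); $r_g=2$ for $Q^{n-2}$ and for $\SS^{10}$ (here $Spin_{10}$ has only two orbits on $\PP^{15}$). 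The remaining values $r_g=n$ for $\mathrm{Fl}(1,n-1;\FF^n)$ and $\mathrm{Gr}_\omega(2,\FF^{2n})$, and $r_g=3$ for $\mathbb E^{16},\mathbb F^{15}$ (in the Jordan-algebra models $\FF^{27}=\mathfrak J_3(\mathbb O)$, $\FF^{26}=\mathfrak J_3^0(\mathbb O)$, where the rank is the Jordan rank), are read off from the orbit stratifications produced in the proof of Theorem~\ref{Tkltame}. The concluding assertion about highest weight vectors is then immediate from the construction of $\XX(G,V)$. I expect the main obstacle to be the amalgamation step --- fixing the exceptional coincidences $Spin_7,Spin_8\rightsquigarrow Q^6$, $Spin_9\rightsquigarrow\SS^{10}$, $G_2\rightsquigarrow Q^5$ and the symplectic reductions --- each needing a dimension count plus the observation that a closed orbit contained in an irreducible orbit of the same dimension of an overgroup must exhaust it; the reductivity-and-simplicity reduction is the other place requiring care.
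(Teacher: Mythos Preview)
Your proposal is correct. The paper gives no separate proof of this corollary—it is stated immediately after Theorem~\ref{Tkltame} with only the remark that the list of varieties is shorter than the list of representations because in several rows a proper subgroup already acts transitively on the same closed orbit. Your argument supplies exactly what the paper leaves implicit: the reduction showing that the connected linear automorphism group of such an $\XX$ is reductive with $V$ simple (so Theorem~\ref{Tkltame} applies), the explicit amalgamation of the low-rank spin and $G_2$ rows into quadrics, projective spaces and $\SS^{10}$, and the observation that tameness forces $\max\rk_\XX=r_g$. The individual facts you invoke (e.g.\ $Spin_{n-1}$ transitive on the closed $Spin_n$-orbit in $\PP(\pi_\ell)$, $Sp_{2n}$ transitive on ${\rm Ver}_2(\PP^{2n-1})$) do appear, scattered through the introduction and the proof of Proposition~\ref{Prop Fund SO}, so this is the intended derivation carried out in full.
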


It is natural to ask whether rs-continuous varieties admit another general characterization. In fact, the starting point of our study was a result by Buczy\'nski and Landsberg, \cite{Bucz-Lands-2011}, based on previous work by Landsberg and Manivel, \cite{Lands-Mani-2003}. This result exhibits a remarkable class of rs-continuous homogeneous varieties --- the subcominuscule varieties. Recall that a variety $\XX\subset\PP(V)$ is called subcominuscule if it is the variety associated to the isotropy representation of an irreducible Hermitian symmetric space $S$, i.e. $\XX=\XX(G,V)$, with $G$ being the complexification of the semisimple part of isotropy subgroup the isometry group of $S$, and $V$ being the tangent space. Then \cite[Prop. 4.1]{Bucz-Lands-2011} states that ${\rm rk}_\XX=\ul{\rm rk}_\XX$ and, furthermore, the $G$-orbits in $\PP(V)$ are exactly the rank sets $\XX_r$. It is then natural to ask: are there other rs-continuous homogeneous varieties besides the subcominuscule ones? There are. It was shown
by Kaji and Yasukura, \cite{Kaji-Yasukura-2000}, that the adjoint variety $\XX(G,{\mk g})$ of a simple Lie algebra ${\mk g}$ is rs-continuous if and only if ${\mk g}$ is of type $A_n$ or $C_n$; see also \cite{Kaji-SecAdjVar}, \cite{Baur-Draisma-2004}. The adjoint variety of type $C_n$ is just the quadratic Veronese variety, which is subcominuscule with respect to its automorphism group. However, the adjoint variety of type $A_n$ is not subcominuscule. The non-subcominuscule varieties in our list are Fl$(1,n-1;\FF^n)$, Gr$_\omega(2,\FF^{2n})$ and F$^{15}$. All these exceptions are, however, hyperplane sections in subcominuscule varieties. Conversely, the homogeneous hyperplane section of subminuscule varieties are the above three and the quadric $Q^{n-1}$ viewed as a hyperplane section in ${\rm Ver}_2(\PP^n)$. This follows from a classification of homogeneous hyperplane sections in homogeneous projective varieties given by Watanabe \cite{Wata-2011}. Thus our theorem implies that, if $\XX$ is a homogeneous
hyperplane section in a subcominuscule variety $\tilde\XX$ in its minimal projective embedding, then $\XX$ is rs-continuous. So we can formulate the following:

\begin{coro}
A homogeneous projective variety $\XX\subset\PP(V)$ is rs-continuous if and only if it is either a subcominuscule variety, or a hyperplane section in a subcominuscule variety $\tilde\XX\subset\PP(\tilde V)$ in its minimal projective embedding. In the latter case, the rank function of $\XX$ is equal to the restriction of the rank function of $\tilde \XX$ and the secant varieties of $\XX$ are equal to the intersections of the secant varieties of $\tilde \XX$ with $\PP(V)$.
\end{coro}

Since our approach is to start with a representation $(G,V)$ rather than with a homogeneous variety $\XX$, we have found the following observations useful. If $(G,V)$ is a representation such that $G$ acts spherically on the projective space $\PP(V)$, then the variety $\XX(G,V)$ is subcominuscule, i.e. $(Aut(\XX),V)$ is a subminuscule representation. This follows directly from the classification of spherical representations given by Kac, \cite{Kac-1980}, see also \cite{Knop-MultFree-1998}.

\begin{rem}
Since spherical representations have finitely many orbits and, by the above corollary, rs-continuous representations are not far away from spherical, it makes sense to ask whether this class of representations is related to the well-known class of projective representations with finitely many orbits. The fact that twisted cubic is not rs-continuous together with the fact that $PSL_n (n\ge3)$ has infinitely many orbits on $\PP(\frak{sl}_n)$ show that the class of rs-continuous representations neither contains, nor is contained in, the class of projective representations with finitely many orbits. Let us notice, however, that the automorphism group of an rs-continuous homogeneous projective variety has finitely many orbits in the projective space if and only if the variety is subcominuscule.
\end{rem}


One of the most important questions, which is asked in the studies of secant varieties and rank is: what are the ideals of secant varieties? It is known, by a result of Kostant, that the ideal of $\XX$ is generated in degree $2$, by the appropriate generalization of the Pl\"ucker equations; cf. \cite[Th. 16.2.2.6]{Landsberg-2012-book}. It was shown by Landsberg and Manivel, \cite{Lands-Mani-2003}, that, for a subcominuscule variety $\XX\subset\PP(V)$, the ideal of the $r$-th secant variety $\sigma_r(\XX)$ is generated in degree $r+1$ by the $(r-1)$-th prolongation of the generating set of the ideal of $\XX$, which is defined as $I_2(\XX)^{(r-1)}=(I_2(\XX)\otimes S^{r-1}V^*)\cap S^{r+1}V^*$. Our theorem has the following

\begin{coro}
If $\XX\subset\PP(V)$ is an rs-continuous homogeneous variety, then the ideal of $\sigma_r(\XX)$ is generated in degree $r+1$ by the prolongation $I_2(\XX)^{(r-1)}$.
\end{coro}

Let us emphasize, that the second secant variety $\sigma_2(\XX)$ plays a prominent role
both in this paper and in the literature. Sometimes the second
secant variety is called just ``the secant variety''. This variety
is much more accessible than the higher secant varieties: for
example for a simple $G$-module the second secant variety has an
open $G$-orbit~\cite[Ch. III, Thm 1.4]{Zak-Book}. Furthermore, we have $\sigma_2(\XX)=\XX_2\cup T\XX$, where $T\XX$ is the tangential variety of $\XX$, cf. \cite[\S 8.1]{Landsberg-2012-book}. It turns out that, if exceptional points exist, they are always present in $T\XX$.

For a systematic treatment, as well as an extensive bibliography, on secant varieties and rank we refer the reader to the recent book of Landsberg, \cite{Landsberg-2012-book}. The general theory of secant varieties allows one to deduce rs-continuity for varieties of small codimension, see Corollary \ref{Cz} here. However, this criterion is applicable to relatively few homogeneous varieties, and to none of the more difficult cases.

The paper is organized as follows. In Subsection~\ref{Sec Rank&Sec},
we recall the notions of secant varieties, rank and border rank,
with their basic properties. In Subsection~\ref{SSirep}, we recall
some basic notions about algebraic groups: Borel subgroup, Cartan
subgroup, weight lattice, root system, Weyl chamber. We also
introduce the notion of chopping (this is a simple combinatorial
procedure) and provide some facts on $\XX_2(G, V)$ and
$\sigma_2(\XX(G, V))$ playing a crucial role in this paper.

In Section~\ref{SprT}, we present a plan of our proof of
Theorem~\ref{Tkltame}, the main theorem of our article. Essentially,
this proof is a compilation of Propositions~\ref{Lh3},~\ref{Llist}
and Theorems~\ref{Pftamecl} and~\ref{Pftameex}. We prove
Propositions~\ref{Lh3},~\ref{Llist} in Section~\ref{Sgm}. We prove
Theorems~\ref{Pftamecl},~\ref{Pftameex} in Section~\ref{Sfrepcl}
and~\ref{Sfrepex} respectively.

In Section~\ref{SSrls}, we prove a strong necessary condition for rs-continuity
of a representation in terms of its choppings. This is formulated in Proposition~\ref{Predl}.

The main statement of Section~\ref{Sfrepcl} is
Theorem~\ref{Pftamecl}. In this theorem we find out which
fundamental representations of classical groups are rs-continuous and which
are r-discontinuous. This is done in the following way: for the fundamental modules
\begin{center}$\FF^n, \Lambda^2\FF^n$ for $SL_n$, $\Lambda^3\FF^6$ for $SL_6$, $\Lambda^2_0\FF^{2n}$ and $\Lambda^3_0\FF^{2n}$ for $Sp_{2n}$,\\$\Lambda^2\FF^n$ for $SO_n$, $RSpin_n$ for $Spin_n (n\le 12)$\end{center}
we check rs-continuity/r-discontinuity in a straightforward way. From these data we deduce r-discontinuity of all other modules using the notion of chopping.

The main statement of Section~\ref{Sfrepex} is
Theorem~\ref{Pftameex}. In this theorem we find out which
fundamental representations of exceptional groups are rs-continuous and which
are r-discontinuous. This is done via case by case checking of the 27 fundamental
representations of the 5 exceptional Lie algebras. For any such
representation we find some arguments by which it is r-discontinuous/rs-continuous. For
most of the representations the arguments are quite short, using chopping or a reference, but for
three representations:\begin{center}$V(\pi_1), V(\pi_2)$ for $F_4$
and $V(\pi_1)$ for $E_7$\end{center}we are able to find only relatively
long arguments presented in the corresponding subsections.

\section{Preliminaries}

In this section, we recall some definitions and elementary facts
about secant varieties and rank. The goal is to introduce notation
and perhaps help the unexperienced reader to become more familiar
with these notions. We also fix some standard notation for reductive
algebraic groups, their Lie algebras and their representations.

Throughout the paper we use the following notation.  The letter
$\XX$ is always used for a projective variety and $X$ denotes the
affine cone over it. For any subset $S\subset V$ we denote by
$\APbr{S}\subset V$ the span of $S$. For any subset $\mathbb
S\in\PP(V)$ we denote by $\APbr{S}\subset V$ the span of the cone
$S$ of $\mathbb S$ in $V$. For any non-zero vector $v\in V$ we
denote by $[v]$ the class of it in $\PP(V)$. If $v=0$, we set
$[v]:=0$.

\subsection{Secant varieties and rank: general definitions}\label{Sec Rank&Sec}

Let $\XX\subset\PP$ be an algebraic variety and $X \subset V$ denote
the affine cone over $\XX$. We denote by $\PP\APbr{\XX}=\PP(\APbr
X)$ the corresponding projective subspace of $\PP$. We say that
$\XX$ spans $\PP$ if $\PP\APbr\XX=\PP$; this is equivalent to the
requirement that $X $ contains a basis of $V$. Assume that this is
the case. Then every point in ${V}$ can be written as a linear
combination of points in $ X $. This allows us to define the notion
of rank already given in the introduction: the rank of
$[\psi]\in\PP$ with respect to $\XX$ is the minimal number of
elements of $X$ necessary to express $\psi$ as a linear combination.
Thus, the space $\PP$ is partitioned into the rank subsets,
$$\PP=\XX_1\sqcup\XX_2\sqcup ....$$ Since $\XX$ spans $\PP$, we have
$\XX_r=\emptyset$ for $r>\dim\PP$.

The following properties of varieties $\XX_r$ hold:

{\rm (i)} $\XX_1=\XX$.

{\rm (ii)} There exists a maximal $r_m\in\{1,...,\dim V\}$, such that $\XX_{r_m}\ne \emptyset$ and $\XX_{r}=\emptyset$ for $r>r_m$.

{\rm (iii)} If $r\in\{1,...,r_m\}$, then $\XX_{r}\ne \emptyset$.

{\rm (iv)} The projective space $\PP$ can be written as a disjoint
union $\PP=\XX_1\sqcup \dots \sqcup \XX_{r_m}$.

Let $r\in\{2,...,r_m\}$. The subset $\XX_r\subset\PP$ is not closed, because we have $\XX\subset\ol{\XX_r}$ and $\XX\nsubseteq\XX_r$. (Here and in what follows we use $\ol{S}$ to denote the Zariski closure of a subset $S$ of some algebraic variety.) The $r$-th secant variety of $\XX$ is defined as
$$
\sigma_r(\XX) = \ol{\bigsqcup\limits_{s\leq r}\XX_s} \subset\PP \;.
$$
It can also be written as
$$
\sigma_r(\XX) = \ol{\bigcup\limits_{x_1,...,x_r\in\XX} \PP_{x_1...x_r} } \;,
$$
where $\PP_{x_1...x_r}$ stands for the projective subspace of $\PP$ spanned by the points $x_1,...,x_r$.

The following properties of secant varieties
$\sigma_r(\XX)$ hold:

{\rm (i)} $\sigma_1(\XX)=\XX_1=\XX$.

{\rm (ii)} $\sigma_r(\XX)\subset\sigma_{r+1}(\XX)$.

{\rm (iii)} If $\XX$ is irreducible, then $\sigma_r(\XX)$ is also irreducible.

{\rm (iv)} There exists a minimal number $r_g\in\{1,...,r_m\}$ such that $\sigma_{r_g}(\XX)=\PP$ and $\sigma_{r_g-1}(\XX)\ne\PP$.

{\rm (v)} For $r\in\{1,...,r_g\}$ the rank subset $\XX_r$ is dense in $\sigma_r(\XX)$, i.e. we have $\sigma_r(\XX)=\ol{\XX_r}$.

\begin{df}
The number $r_g$ from part {\rm (iv)} of the above proposition is called the {\it typical rank} of $\PP$ with respect to $\XX$.
\end{df}

Let $[\psi]\in\PP$. The border rank of $[\psi]$ with respect to $\XX$ is defined as
$$
\ul{\rm rk}[\psi]:= \ul{\rm rk}_\XX[\psi]:=
\min\{r\in\NN:[\psi]\in\ol{\XX_r}\} \;.
$$

\begin{df}
Points $[\psi]\in\PP$, for which ${\rm rk}[\psi]\ne \ul{\rm rk}[\psi]$, are called {\it exceptional}.
\end{df}

Clearly, ${\rm rk}[\psi]\geq\ul{\rm rk}[\psi]$ and $[\psi]$ is
exceptional exactly when $\ul{\rm rk}[\psi]<{\rm rk}[\psi]$. So,
exceptional points are points which can be approximated by points of
lower rank. Also, we have $$\ul{\rm rk}[\psi] =
\min\{r\in\NN:[\psi]\in\sigma_r(\XX)\}.$$ This leads us to the next
definition.

\begin{df} (i) The secant variety $\sigma_r(\XX)$ is called {\it r-discontinuous} if it
contains an exceptional vector and {\it rs-continuous} if it does not.

(ii) The embedding $\XX\subset\PP$ is called {\it
rs-continuous}, if all secant varieties $\sigma_r(\XX)$ are
rs-continuous. Equivalently, $\XX\subset\PP$ is rs-continuous if ${\rm rk}$ is a lower semi-continuous function on $\PP$. We say that $\XX\subset\PP$ is {\it r-discontinuous} if $\XX\subset\PP$ is not rs-continuous.

(iii) The embedding $\XX\subset\PP$ is called {\it $i$-continuous}, if
$\sigma_i(\XX)$ is rs-continuous. The embedding $\XX\subset\PP$ is called
$i$-{\it discontinuous}, if $\sigma_i(\XX)$ is r-discontinuous and $\sigma_s(\XX)$ is rs-continuous for $s<i$.
\end{df}

We record another list of simple statements, which are derived
immediately from the above definitions.

1) The secant variety $\sigma_r(\XX)$ is r-discontinuous if and only if $\sigma_r(\XX)\ne \XX_1\sqcup\XX_2\sqcup...\sqcup\XX_r$.

2) The embedding $\XX\subset \PP$ is r-discontinuous if and only if ${\rm rk}_\XX\ne\ul{\rm rk}_\XX$.

We denote by $X_r\subset V$ the cone over $\XX_r$ without 0 and by
$\sigma_r(X)\subset V$ the cone over $\sigma_r(\XX)$ with 0. We denote by $T\XX$ the union of over all points of $\XX$ of tangent spaces to $X$ in $\PP$. We say that $\sigma_2(\XX)$ is {\it nondegenerate} if $\dim\sigma_2(\XX)=2\dim\XX+1$. We say that $T\XX$ is {\it nondegenerate} if $\dim T\XX=2\dim\XX$. The following propositions provide to us abstract tools which we use to study rs-continuity/r-discontinuity of varieties $\XX\subset\PP$.
\begin{prop}[{~\cite[Corollary 4]{FH}}] If $\XX$ is a smooth projective variety, then precisely one of the following must hold:

(i) $\dim T\XX=2\dim\XX$ and $\dim\sigma_2(\XX)=2\dim\XX+1$, or

(ii) $T\XX=\sigma_2(\XX)$.\end{prop}
\begin{prop}[{Zak's theorem on linear normality ~\cite{FL},~\cite{Zak-Book}, see also~\cite[Theorem 1.1]{L96}}] Assume that $\XX$ is smooth, nondegenerate and $\PP\ne\sigma_2(\XX)$. Then $\codim_\PP \XX\ge \frac{\dim\XX}2+2$.\end{prop}
\begin{coro}\label{Cz} Assume that $\XX$ is smooth, nondegenerate and $\codim_\PP \XX< \frac{\dim\XX}2+2$. Then $\XX$ is rs-continuous.\end{coro}

\begin{prop}[{Landsberg-Roberts~\cite[Theorem 10.3]{L96},~\cite[Introduction]{Rob}}]\label{Pl96} Assume that $\XX$ is smooth and $\codim_\PP\XX>\binom{\dim X+1}2$. Then $\sigma_2(\XX)$ is nondegenerate.\end{prop}



Some representations satisfy the conditions of Corollary~\ref{Cz} and thus their rs-continuity is checked by the general machinery. Nevertheless, to study all possible representations we need more methods. The general methods we employ to decide rs-continuity for nonfundamental representations are independent of codimension of the variety or degeneracy of its secant variety.

\subsection{Irreducible representations of reductive groups}\label{SSirep}

Here we fix some notation concerning semisimple or reductive
algebraic groups and their representations. All notions from this
theory used by us can be found in \cite{Goodman-Wallach-2009} (the
notation, however, may differ).

Let $G$ be a connected reductive algebraic group over $\CC$ and
${\mk g}$ be its Lie algebra. We assume that the semisimple part
$G'$ of $G$ is simply connected. Let $B\subset G$ be a Borel
subgroup and $T\subset B$ be a maximal torus. Let ${\mk h}\subset{\mk
b}\subset {\mk g}$ denote the respective subalgebras of ${\mk g}$.
Let $\Lambda\subset{\mk h}^*$ be the integral weight lattice and
$\Delta\subset\Lambda$ the root system. Let
$\Delta=\Delta^+\sqcup\Delta^-$ be the partition of the root system
into positive and negative roots corresponding to the Borel subgroup
$B$. Let $\Pi=\{\alpha_1,...,\alpha_\ell\}$ be the basis of simple
roots in $\Delta^+$.

The Cartan-Killing form of $\frak g$ determines a scalar product
$(\cdot, \cdot)$ on $\frak h^*$. This scalar product $(\cdot,
\cdot)$ defines a Dynkin diagram (graph) $Dyn$, whose vertices are
labeled by the simple roots $\alpha_1,...,\alpha_\ell\in\Pi$. Also
$\Pi$ and the scalar product define the dominant Weyl chamber
$\Lambda$ and the monoid of dominant weights
$\Lambda^+\subset\Lambda$, generated by the fundamental weights
$\pi_1,...,\pi_\ell$. Hence $\lambda\in\Lambda^+$ defines a function
$f_\lambda:\Pi\to\mathbb Z_{\ge0}$ such that
$\lambda=f_\lambda(\alpha_1)\pi_1+...+f_\lambda(\alpha_\ell)\pi_\ell$. This defines a one-to-one correspondence between the set $\Lambda^+$ of dominant weights and functions from the set of vertices of $Dyn$ to the non-negative integers. We put
\begin{center}$h(\lambda):=f_\lambda(\alpha_1)+...+f_\lambda(\alpha_\ell)$.\end{center}

The set $\Lambda^+$ is also in a one-to-one correspondence with the set of isomorphism classes of simple finite-dimensional
$G$-modules. We denote by $V(\lambda)$ the irreducible
representation of $G$ corresponding to $\lambda\in\Lambda^+$ ($\lambda$
is a highest weight of $V(\lambda)$ and $V(\lambda)$ contains a
unique up to scaling vector $v^\lambda$ of weight $\lambda$). Set
$\PP(\lambda):=\PP(V(\lambda))$. We denote by $X(\lambda)$ and
$\XX(\lambda)$ the orbits of $v^\lambda$ and $[v^\lambda]$ in
$V(\lambda)$ and $\PP(\lambda)$ respectively. Note that
$\XX(\lambda)$ is a unique closed $G$-orbit on $\PP(\lambda)$.

Any subdiagram of a Dynkin diagram is again a Dynkin diagram. Thus,
by chopping down some vertices (along with the adjacent edges) we
obtain a new diagram $\ul{Dyn}$, which corresponds to a semisimple
Levi subgroup $\ul G\subset G$. The restriction of $f_\lambda$ to
$\ul{Dyn}$ defines a simple representation $\ul{V}$ of the group
$\ul G$.

\begin{df}\label{Def_Chopping}
We say that a $\ul G$-representation $\ul V$ is a {\it chopping} of
a $G$-representation $V$, if $\ul V$ is obtained from $V$ via the
above construction.
\end{df}
\begin{rem}Note that a chopping of a fundamental representation is either
fundamental, or trivial one-dimensional.\end{rem}

The Dynkin diagram determines the Weyl group ${\mc W}$. We denote by
$w_0$ the longest element of ${\mc W}$ with respect to the Bruhat
order. The weights of the form $w\lambda$, with $w\in{\mc W}$, are
called {\it extreme weights} of the module $V(\lambda)$ and the
corresponding weight vectors are called extreme weight vectors. For
any $w\in{\mc W}$ we denote by $v^{w\lambda}$ the unique up to
scaling vector of weight $w\lambda$. The weight $w_0\lambda$ is
called {\it the lowest weight} of $V(\lambda)$.

For $\lambda_1, \lambda_2\in\Lambda^+$, $G$-module
$V(\lambda_1)\otimes V(\lambda_2)$ contains a unique up to scaling
vector of weight $\lambda_1+\lambda_2$. This vector is contained in
a simple $G$-submodule, which is isomorphic to
$V(\lambda_1+\lambda_2)$. We call this submodule the {\it Cartan
component} of $V(\lambda_1)\otimes V(\lambda_2)$. It is well known
that the Cartan component does not depend on a choice of Borel
subgroup $B\subset G$.

Let us fix $\lambda\in\Lambda^+$ and put $V=V(\lambda)$ and
$\PP=\PP(\lambda):=\PP(V(\lambda))$. The group $G$ acts on the
projective space $\PP$ and has a unique closed orbit therein,
namely, the orbit through the highest weight line, to be denoted by
$\XX=\XX(\lambda):=G[v^\lambda]$. We have $\XX=G/P$, where
$P$ denotes the stabilizer of $[v^\lambda]\in\PP$ in $G$.
This $P$ is a standard parabolic subgroup, i.e. a closed subgroup of
$G$ containing the fixed Borel subgroup $B$. The cosets of $G$ by
parabolic subgroups are called the flag varieties of $G$. Thus we
have an equivariantly embedded flag variety $\XX=G/P\subset
\PP$. In fact, all equivariantly embedded homogeneous
projective varieties are obtained in this fashion. Note, that the
variety $\XX$ is the set of highest weight vectors with respect to
all possible choices of Borel subgroups $B\subset G$.


The irreducibility of $V$ implies that $\XX$ spans
$\PP$. Hence, we have well defined rank and border rank
functions on $\PP$ with respect to $\XX$, as well as secant
varieties $\sigma_r(\XX)\subset \PP$. Since the group $G$ acts on
$V$ by invertible linear transformations, it follows immediately
that rank and border rank are $G$-invariant functions. Hence, the
rank sets $\XX_r$ and the secant varieties $\sigma_r(\XX)$ are
preserved by $G$.

\begin{df}
Let $V=V(\lambda)$, with $\lambda\in\Lambda^+$, be an irreducible
representation of a reductive linear algebraic group $G$. Let
$\XX=\XX(\lambda)$ be the unique closed $G$-orbit in
$\PP=\PP(\lambda)$. The $G$-module $V$ is called {\it
rs-continuous} (resp. {\it r-discontinuous}, $i$-{\it continuous}, $i$-{\it discontinuous}), if the
variety $\XX\subset\PP$ is rs-continuous (resp. r-discontinuous, $i$-continuous, $i$-discontinuous).
\end{df}

Our goal is to classify all rs-continuous irreducible representations of
semisimple algebraic groups.

\begin{rem}In some of our constructions we consider reductive groups, rather
than semisimple groups, just because this simplifies some steps. The
actions of $G$ and $G'$ (the commutant of $G$) on $\PP$ coincide and we are concerned with
properties of the embedding $\XX\subset\PP$. Thus the classification
of rs-continuous representations of reductive groups can be easily obtained
from the one for semisimple groups.\end{rem} Below we assume that
$\lambda\ne0$ (this corresponds to the inequality $\dim V\ge 2,~
V=V(\lambda),$ assumed in Theorem~\ref{Tkltame}).

We denote by $A_n, B_n, C_n, D_n, E_6, E_7, E_8, F_4, G_2$ the
simple simply connected algebraic groups with the corresponding
Dynkin diagrams. We denote by $\frak a_n, \frak b_n, \frak c_n,
\frak d_n, \frak e_6, \frak e_7, \frak e_8, \frak f_4, \frak g_2$
the corresponding Lie algebras.

\section{Plan of proof of Theorem~\ref{Tkltame}}\label{SprT}
Theorem~\ref{Tkltame} follows from a number of propositions and
theorems proved throughout the article. Thus the plan explains the
role of different parts of this text.

In Proposition \ref{Lh3}, we prove that, if an irreducible
representation $V(\lambda)$ with highest weight $\lambda$ is rs-continuous,
then $h(\lambda)<3$. In Proposition \ref{Llist}, we classify all
irreducible rs-continuous modules $V(\lambda)$ with $h(\lambda)=2$.

The rs-continuous fundamental representations (i.e. irreducible $G$-modules
$V(\lambda)$ with $h(\lambda)=1$) are classified in
Theorems~\ref{Pftamecl} and~\ref{Pftameex}. This is done in a case
by case study. We consider separately representations of classical
groups (Theorem~\ref{Pftamecl}), and representations of exceptional
groups (Theorem~\ref{Pftameex}). This completes the proof of
Theorem~\ref{Tkltame}.

Let us say a few words about the proofs of
Theorems~\ref{Pftamecl} and~\ref{Pftameex}. The rs-continuous representations are considered individually and for each case we provide specific arguments.
Most representations are r-discontinuous and thus we have to check
r-discontinuity of a huge amount of cases. The number of cases to be
considered is greatly reduced by Proposition \ref{Predl}, where we
show that, if a $\ul{G}$-representation $\ul V$ is a chopping of a
$G$-representation $V$ and $\ul{V}$ is r-discontinuous, then $V$ is also r-discontinuous. Thus,
it suffices to check r-discontinuity directly for only few basic cases, then we are able to
deduce r-discontinuity for most of the fundamental representations.

\section{Non-fundamental rs-continuous modules}\label{Sgm}
The goal of this section is to classify all non-fundamental rs-continuous
modules. First of all we provide in Lemma~\ref{L2w} of
Subsection~\ref{SSsx} a way to construct exceptional vectors in
$V(\lambda)$. Using this construction we show in
Proposition~\ref{Lh3} of Subsection~\ref{Sred} that, if a $G$-module
$V(\lambda)$ is 2-continuous, then it is either a fundamental $G$-module
(i.e. $h(\lambda)=1$) or is a Cartan component in a tensor product
of two fundamental $G$-modules (i.e. $h(\lambda)=2$). Further on, in
Proposition \ref{Lh2}, we shall show that, in the latter situation,
each of the fundamental modules satisfies a very strict condition,
related to the notion of HW-density introduced in
Definition~\ref{Dhwd}. Using the explicit description of all
HW-dense modules which we present in Corollary~\ref{Cfew}, we are
able to complete in Proposition~\ref{Llist} a classification of
non-fundamental rs-continuous $G$-modules $V(\lambda)$ (i.e. modules
$V(\lambda)$ such that $h(\lambda)=2$).


\subsection{The varieties $\sigma_2(\XX(\lambda))$ and
$\XX_2(\lambda)$}\label{SSsx}
In some sense, in this article we study the
difference between the variety $\sigma_2(\XX(\lambda))$ and its open subset
$\XX_2(\lambda)$. Here we collect some of their basic
features. First we recall that, if the generic rank of
$V(\lambda)$ is greater than 1 (i.e. if
$\PP(\lambda)\ne\XX(\lambda)$), then
$$\sigma_2(X(\lambda))=\overline{X_2(\lambda)}.$$
We provide some kind of explicit description of the elements of
$\XX_2(\lambda)$ (Lemma~\ref{Lzak?}) and exhibit some set of
elements of $\sigma_2(\XX(\lambda))$, which tend to be exceptional
(Lemma~\ref{L2w}).
\begin{lemma}~\label{Lzak?} Let $V(\lambda)$ be an irreducible representation of a reductive group $G$. Then

a) any pair $([v_1], [v_2])\in \XX(\lambda)\times \XX(\lambda)$ is $G$-conjugate to a pair $([v^\lambda],
[v^{w\lambda}])$ for some $w\in\mathcal W$,

b) any element of $\XX_2(\lambda)$ is conjugate to
$[v^\lambda+v^{w\lambda}]$ for some $w\in\mc W$.\end{lemma}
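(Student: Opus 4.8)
The plan is to prove (a) first and deduce (b) as a direct consequence. For (a), the key observation is that $\XX(\lambda)=G/P$ is a single $G$-orbit, so by acting with $G$ on the first coordinate we may assume $[v_1]=[v^\lambda]$. The stabilizer of $[v^\lambda]$ is the parabolic $P=G_{[v^\lambda]}$, so it remains to understand the $P$-orbits on $\XX(\lambda)$. Here I would invoke the Bruhat decomposition: $G=\bigsqcup_{w\in\mathcal W/\mathcal W_P} BwP$, equivalently $G/P=\bigsqcup_{w} BwP/P$, so the $B$-orbits (hence the $P$-orbits, since $B\subset P$) on $\XX(\lambda)=G/P$ are indexed by a set of coset representatives $w\in\mathcal W$. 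Each $B$-orbit $BwP/P$ contains the point $[v^{w\lambda}]$ (the image of the extreme weight line of weight $w\lambda$), which is fixed by $H$ and is the unique $H$-fixed point in its cell. Therefore every $[v_2]\in\XX(\lambda)$ is $P$-conjugate, a fortiori $G$-conjugate while keeping $[v_1]=[v^\lambda]$ fixed, to some $[v^{w\lambda}]$. This gives (a).

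For (b), take $[\psi]\in\XX_2(\lambda)$, so $\psi=v_1+v_2$ with $[v_1],[v_2]\in\XX(\lambda)$ and $\rk[\psi]=2$ (in particular $[v_1]\ne[v_2]$, and $[\psi]$ is genuinely of rank $2$). By part (a) there is $g\in G$ with $g\cdot[v_1]=[v^\lambda]$ and $g\cdot[v_2]=[v^{w\lambda}]$ for some $w\in\mathcal W$. Then $g\cdot\psi = c_1 v^\lambda + c_2 v^{w\lambda}$ for nonzero scalars $c_1,c_2$ (nonzero because $\rk[\psi]=2$ forces both summands to be present). Rescaling $v^\lambda$ and $v^{w\lambda}$ within their lines — which changes neither $\XX(\lambda)$ nor the conjugacy class of $[\psi]$ — we may absorb $c_1,c_2$ and conclude $[\psi]$ is conjugate to $[v^\lambda+v^{w\lambda}]$.

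\emph{Main obstacle.} The one point requiring care is the identification, in part (a), of a canonical representative of each $P$-orbit (equivalently $B$-orbit) on $G/P$ with an extreme weight line $[v^{w\lambda}]$. Concretely one must check that the Schubert cell $BwP/P$ really does pass through $[v^{w\lambda}]$ and that the various $w$ ranging over minimal-length coset representatives of $\mathcal W/\mathcal W_P$ exhaust all of $\XX(\lambda)$ — this is standard Bruhat theory, but it is the substantive input. A minor subtlety is that distinct $w$ may give $P$-conjugate (indeed equal) points $[v^{w\lambda}]$ when $w\mathcal W_P$ is not reduced, so the statement is an existence claim about \emph{some} $w\in\mathcal W$, not a parametrization; phrasing it that way avoids any issue. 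Everything else — tracking nonvanishing of the coefficients $c_1,c_2$ using $\rk[\psi]=2$, and the harmless rescaling within weight lines — is routine.
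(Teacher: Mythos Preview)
Your proof is correct, but your route to part (a) differs from the paper's. The paper argues symmetrically: given $[v_1],[v_2]\in\XX(\lambda)$, pick Borel subgroups $B_1,B_2$ for which $v_1,v_2$ are highest weight vectors; then $B_1\cap B_2$ contains a maximal torus $T_{12}$, and $B_1=wB_2$ for some $w$ in the Weyl group of $T_{12}$, so after conjugating $T_{12}$ to $H$ the pair becomes $([v^\lambda],[v^{w\lambda}])$. Your approach instead fixes $[v_1]=[v^\lambda]$ and uses the Bruhat decomposition of $G/P$ into $B$-orbits, each containing an extreme weight line. Both are standard; the paper's argument is more symmetric and avoids having to identify Schubert cells with extreme weight lines, while yours makes the parabolic stabilizer explicit and connects directly to Schubert theory. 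For part (b) the arguments agree, except that where you absorb the scalars $c_1,c_2$ by redefining the (only-up-to-scalar) weight vectors, the paper instead fixes $v^\lambda,v^{w\lambda}$ and uses the torus $H$: since $[v_1]\ne[v_2]$ forces $\lambda\ne w\lambda$, the torus acts with distinct characters on the two lines and can normalize $[av^\lambda+bv^{w\lambda}]$ to $[v^\lambda+v^{w\lambda}]$. Your phrasing ``changes neither $\XX(\lambda)$ nor the conjugacy class of $[\psi]$'' is a little loose---the point is really that the target $[v^\lambda+v^{w\lambda}]$ is only well-defined up to such rescalings---but the conclusion is the same.
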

\begin{proof}Fix $v_1, v_2\in X(\lambda)$ such that $[v_1]\ne[v_2]$. Let $B_1, B_2$ be Borel subgroups of $G$ such that $v_1, v_2$ are
the corresponding to $B_1, B_2$ highest weight vectors. It is known
that $B_1\cap B_2$ contains a maximal torus $T_{12}$ of $G$ and
there exists $w\in\mc W_{12}:=N_G(T_{12})/T_{12}$ such that
$B_1=wB_2$. Thus $([v_1], [v_2])$ is conjugate to $([v^{\lambda}],
[v^{w\lambda}])$ for some $w\in\mathcal W$. This
completes the proof of part a).

To prove part b) we observe that any element of $X_2(\lambda)$ is a
sum $v_1+v_2$ for some $v_1, v_2\in X(\lambda)$ such that $v_1\ne
v_2$. According to part a) the pair $([v_1], [v_2])$ is conjugate to
$([v^{\lambda}], [v^{w\lambda}])$ for some $w\in\mathcal W$. Thus
$[v_1+v_2]$ is conjugate to $[av^\lambda+bv^{w\lambda}]$ for some
non-zero $a,b\in \FF$. As $[v_1]\ne [v_2]$, $\lambda\ne w\lambda$.
Hence $[v^\lambda+v^{w\lambda}]$ is conjugate to
$[av^\lambda+bv^{w\lambda}]$ for any non-zero $a,b\in \FF$. This
completes the proof of b).
\end{proof}
\begin{coro} Let $w_0$ be the longest Weyl group element. The orbit $G[v^\lambda+v^{w_0\lambda}]$ is open in both varieties
$\sigma_2(\XX(\lambda))$ and $\XX_2(\lambda)$. (see
also{~\cite[Ch. III, Thm 1.4]{Zak-Book}})\end{coro}
\begin{lemma}\label{L2w}Fix $x\in X(\lambda)$ and $t\in\frak g$. Then $[x+tx]\in\sigma_2(\XX(\lambda))$.\end{lemma}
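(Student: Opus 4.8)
The plan is to exhibit $[x+tx]$ explicitly as a limit of points of rank $\le 2$, i.e., as a point of $\sigma_2(\XX(\lambda)) = \overline{\XX_2(\lambda)}$ (using the identification recalled at the start of Subsection~\ref{SSsx}). The natural curve to use is $[\exp(\varepsilon t)x]$, which lies on $\XX(\lambda)$ for every $\varepsilon$ since $\XX(\lambda)=G[v^\lambda]$ is $G$-stable and $\exp(\varepsilon t)\in G$. Expanding, $\exp(\varepsilon t)x = x + \varepsilon\, tx + O(\varepsilon^2)$. Thus the two points $x$ and $\exp(\varepsilon t)x$ both lie on $X(\lambda)$, so the vector
$$
w_\varepsilon := \frac{1}{\varepsilon}\bigl(\exp(\varepsilon t)x - x\bigr) = tx + O(\varepsilon)
$$
has $\rk_{\XX}[w_\varepsilon]\le 2$ for all $\varepsilon\ne 0$ small (it is a difference of two points of $X(\lambda)$, up to the nonzero scalar $1/\varepsilon$ which does not affect the rank), hence $[w_\varepsilon]\in\sigma_2(\XX(\lambda))$.

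The second step is to add back $x$: consider instead
$$
u_\varepsilon := x + \frac{1}{\varepsilon}\bigl(\exp(\varepsilon t)x - x\bigr) = \Bigl(1-\tfrac1\varepsilon\Bigr)x + \tfrac1\varepsilon \exp(\varepsilon t)x .
$$
This is a linear combination of the two points $x\in X(\lambda)$ and $\exp(\varepsilon t)x\in X(\lambda)$, so $[u_\varepsilon]\in\XX_2(\lambda)\cup\XX_1(\lambda)\subset\sigma_2(\XX(\lambda))$ for every $\varepsilon\ne0$. Now let $\varepsilon\to 0$: since $\exp(\varepsilon t)x = x + \varepsilon tx + O(\varepsilon^2)$, we get $u_\varepsilon \to x + tx$. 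Because $\sigma_2(\XX(\lambda))$ is Zariski closed and these are points of a morphism from (an open subset of) $\AA^1$, the limit $[x+tx]$ lies in $\sigma_2(\XX(\lambda))$. One should phrase the limit argument algebraically (as the value at $\varepsilon=0$ of a morphism $\AA^1\dashrightarrow\PP$ that extends to $\varepsilon=0$) rather than topologically, to stay within the algebraic-geometry setup over $\FF$; equivalently, clear denominators and observe that $[\varepsilon x + \exp(\varepsilon t)x - x]$ is a regular family on all of $\AA^1$ whose fiber over $0$ is $[tx]$ — wait, that gives $tx$, not $x+tx$, so one keeps the coefficient: use $[\varepsilon x + (\exp(\varepsilon t)x - x)]$ is wrong; rather use the family $\psi(\varepsilon) = \varepsilon x + \exp(\varepsilon t)x - x + \varepsilon\cdot(\text{nothing})$. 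More cleanly: the family $\phi(\varepsilon):=(\varepsilon-1)x + \exp(\varepsilon t)x$ is polynomial in $\varepsilon$, satisfies $[\phi(\varepsilon)]\in\sigma_2(\XX(\lambda))$ for $\varepsilon\ne0$ (spanned by $x$ and $\exp(\varepsilon t)x$), and $\phi(0) = -x + x = 0$ — degenerate again. The honest fix is to keep the scaling explicit: set $\phi(\varepsilon) = (\varepsilon - 1)x + \exp(\varepsilon t)x$; then $\phi(\varepsilon)/\varepsilon \to x + tx$ as a limit in $\PP$, and $[\phi(\varepsilon)/\varepsilon] = [\phi(\varepsilon)]\in\sigma_2$, and the map $\varepsilon\mapsto[\phi(\varepsilon)/\varepsilon]$ extends regularly to $\varepsilon=0$ with value $[x+tx]$ since $\phi(\varepsilon)/\varepsilon = x + tx + O(\varepsilon)$ has leading term $x+tx\ne 0$ (or, if $x+tx=0$, the statement $[0]=0\in\sigma_2$ is vacuous).

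The only genuinely delicate point is the degenerate-limit bookkeeping just described: one must choose the $\varepsilon$-family so that it extends to a morphism to $\PP$ at $\varepsilon=0$ with the desired value, i.e., normalize so the leading term in $\varepsilon$ is exactly $x+tx$. This is routine once set up correctly — the cleanest formulation is to note that $\bigl[x + \tfrac1\varepsilon(\exp(\varepsilon t) - 1)x\bigr] \in \sigma_2(\XX(\lambda))$ for all $\varepsilon\ne 0$, that $\tfrac1\varepsilon(\exp(\varepsilon t)-1)x$ is a regular (in fact polynomial-in-$\varepsilon$ with no pole, since the $\tfrac1\varepsilon$ cancels the absent constant term) vector-valued function of $\varepsilon$ with value $tx$ at $\varepsilon=0$, hence $\varepsilon\mapsto x + \tfrac1\varepsilon(\exp(\varepsilon t)-1)x$ is a morphism $\AA^1\to V$, and its composition with $V\setminus\{0\}\to\PP$ (defined wherever the vector is nonzero, which includes a neighborhood of $0$ unless $x+tx=0$) lands in the closed set $\sigma_2(\XX(\lambda))$ on the dense open $\varepsilon\ne 0$, hence at $\varepsilon=0$ as well, giving $[x+tx]\in\sigma_2(\XX(\lambda))$. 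No further obstacle arises; the statement is essentially the observation that the affine tangent space to the cone $X(\lambda)$ at $x$ is contained in $\sigma_2(X(\lambda))$, a standard fact about secant varieties.
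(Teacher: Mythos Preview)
Your approach is essentially the same as the paper's: both rest on the standard fact that the (affine) tangent space to the cone $X(\lambda)$ at $x$ is contained in $\sigma_2(X(\lambda))$, together with the observation that $tx$ is tangent to $X(\lambda)$ at $x$. The paper simply states this in two lines as a known fact about secant varieties, whereas you reconstruct it by an explicit one-parameter limit along $\exp(\varepsilon t)x$; you even note at the end that this is the standard tangent-space-in-secant-variety statement.

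One small technical correction: your claim that $\varepsilon\mapsto \tfrac1\varepsilon(\exp(\varepsilon t)-1)x$ is \emph{polynomial} in $\varepsilon$ is not correct for general $t\in\mk g$ --- it is polynomial only when $t$ acts nilpotently on $V(\lambda)$; otherwise it is merely analytic. Over $\FF=\mathbb{C}$ this is harmless since $\sigma_2(\XX(\lambda))$ is Zariski (hence analytically) closed. To keep the argument purely algebraic over an arbitrary algebraically closed field of characteristic zero, replace the exponential curve by any algebraic curve in $G$ through $e$ with tangent direction $t$ (such curves exist since $G$ is smooth), or simply invoke the standard fact directly as the paper does.
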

\begin{proof}By definition $\XX_2(\lambda)\cup\XX(\lambda)$ is the union of lines going
through pairs of points of $\XX(\lambda)$. Thus the tangent space
$\T_xX(\lambda)\subset V$ to $X(\lambda)$ in $x$ belongs to
$\overline{X_2(\lambda)}=\sigma_2(X(\lambda))$. On the other
hand, $x+tx$ belongs to $\T_xX(\lambda)$ as $tx$ is tangent to
$X(\lambda)$. This completes the proof.
\end{proof}

\subsection{HW-density and 2-continuity}
\label{Sred} In this subsection, we analyze the notion of 2-continuity via the notion of HW-density given in
Definition~\ref{Dhwd}. This analysis allows to find out all rs-continuous
modules which are not fundamental. Notice that 2-continuity is, a priori, weaker than rs-continuity, but is much
simpler to check. A posteriori, it turns out that rs-continuity and 2-continuity are equivalent for the class of
homogeneous projective varieties considered in this paper.

We proceed in the following way. We prove that, if an irreducible
$G$-module $V$ is 2-continuous, then it is either a fundamental module of
$G$ (i.e. only one mark of the highest weight of $V$ is distinct
from zero and this mark equals 1) or is a Cartan component of the
tensor product of two fundamental modules of $G$, see
Proposition~\ref{Lh3}. In the latter case we prove that both
fundamental modules in the product have to be HW-dense, see
Proposition~\ref{Lh2}. It turns out that HW-density is a very strict
condition as we show in Corollary~\ref{Cfew}. This result leads to
Proposition~\ref{Llist}, which lists all 2-continuous $G$-modules, which
are not fundamental.

We start with the definition of HW-density followed by the
statements of the results. The proofs of these results are given
below until the end of the section.
\begin{df}\label{Dprehwd}Let $\XX$ be a smooth subvariety of a projective space $\PP$. We say that $\XX$ is {\it HW-dense}, if for any point $x_1\in\XX$ there exists an open subset $U$ of the tangent space to $\XX$ at $x_1$ such that for all $v\in U$ there exists $x_2\in\XX$ such that $v\in \APbr{x_1, x_2}$.\end{df}
\begin{df}\label{Dhwd} We say that a simple $G$-module $V(\lambda)$ is {\it
HW-dense}, if $\XX(\lambda)$ is HW-dense~in~$\PP(\lambda)$.\end{df}
We shall prove below in this subsection the following criterion of
HW-density.
\begin{lemma}\label{Lfew2}The $G$-module $V(\lambda)$ is HW-dense if and
only if one of the following equivalent conditions holds:

(a) The set $X(\lambda)$ of highest weight vectors is dense in
$V(\lambda)$.

(b) All non-zero vectors of $V$ are highest weight vectors with
respect to some choice of a Borel subgroup of $G$.

(c) $G$ acts transitively on $\PP(V)$.\end{lemma}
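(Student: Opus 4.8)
The plan is to establish the chain of equivalences $\text{(HW-dense)} \Leftrightarrow (a) \Leftrightarrow (b) \Leftrightarrow (c)$, where most of the work is purely formal once one unwinds definitions, and the one substantive input is the closedness of $G$-orbits in suitable situations.

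First I would prove the cycle $(a)\Rightarrow(b)\Rightarrow(c)\Rightarrow(a)$. The implication $(a)\Rightarrow(b)$ is immediate: if $X(\lambda)$ is dense in $V(\lambda)$, then — recalling from the Preliminaries that $X(\lambda)$ is precisely the set of highest weight vectors over \emph{all} choices of Borel subgroup — every nonzero vector lies in the closure of this set; but $X(\lambda)$ is a cone and, being the cone over a single projective orbit, it is also constructible and $G$-stable, so its closure adds only the origin (indeed $\ol{X(\lambda)} = X(\lambda)\cup\{0\}$ because $\XX(\lambda)$ is closed in $\PP(\lambda)$), forcing $X(\lambda) = V(\lambda)\setminus\{0\}$ — hence every nonzero vector is a highest weight vector for some Borel. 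For $(b)\Rightarrow(c)$: if every $[v]\in\PP(V)$ is a highest weight line for some Borel $B'$, then $[v]$ lies on the closed $G$-orbit (which is independent of the Borel and equals $\XX(\lambda)$ as a set since highest weight lines for different Borels are $G$-conjugate, by Lemma~\ref{Lzak?}a applied with a single point); thus $\PP(V) = \XX(\lambda)$ and $G$ acts transitively. For $(c)\Rightarrow(a)$: if $G$ acts transitively on $\PP(V)$, then $\XX(\lambda) = \PP(V)$, so its cone is all of $V\setminus\{0\}$, which in particular is dense, giving $(a)$.

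Next I would dispatch the equivalence of HW-density with these conditions. The direction $(a)\Rightarrow$ HW-dense is easy: if $X(\lambda)$ is dense, then $\XX(\lambda) = \PP(\lambda)$, so for any $x_1$ the tangent space is all of $V$ and one may simply take $U$ to be the complement of $\APbr{x_1}$ (or all of $V$, taking $x_2$ arbitrary when $v$ is proportional to $x_1$); every $v$ then lies in $\APbr{x_1,x_2}$ for a suitable $x_2\in\XX(\lambda)=\PP(\lambda)$. The reverse direction, HW-dense $\Rightarrow(a)$ (equivalently $\Rightarrow(c)$), is the part that requires an actual argument. Here the idea is: HW-density says the union $\bigcup_{x_2\in\XX(\lambda)}\APbr{x_1,x_2}$ contains a dense (open) subset of $\T_{x_1}\XX(\lambda)$; but this union is contained in $\sigma_2(\XX(\lambda))$ and more precisely, for fixed $x_1$, it is the image of the incidence variety $\{(x_2,[v]): [v]\in\APbr{x_1,x_2}\}$, which is irreducible of dimension $\dim\XX(\lambda)+1$. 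If this image is dense in $\T_{x_1}\XX(\lambda)$, a dimension count forces $\dim\T_{x_1}\XX(\lambda)\le \dim\XX(\lambda)+1$; combined with smoothness this says $\XX(\lambda)$ is either all of $\PP(\lambda)$ or a hypersurface. One then rules out the hypersurface case: a homogeneous (in the group sense) smooth hypersurface in $\PP^{N-1}$ spanning the space must be a quadric $Q^{N-2}$, and for the quadric the tangent space at a point is a hyperplane, while lines through $x_1$ and a second point $x_2\in Q$ sweep out only the tangent hyperplane's intersection with the cone over $Q$ — which is \emph{not} dense in that hyperplane once $N\ge 4$ — so HW-density fails; the low-dimensional exceptions ($\PP^1$, conic in $\PP^2$) are already transitive, i.e. satisfy $(c)$. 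Hence HW-density forces $\XX(\lambda) = \PP(\lambda)$, which is $(c)$.

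The main obstacle I anticipate is precisely the hypersurface/quadric analysis in the last step: making rigorous that an equivariantly embedded smooth hypersurface other than a quadric cannot occur (this is classical — smooth $G$-varieties that are hypersurfaces are quadrics), and then checking by hand that the quadric itself is not HW-dense for $N\ge 4$ while handling the small cases $N=2,3$ separately. An alternative route that sidesteps the hypersurface subtlety is to argue directly at the level of tangent cones: HW-density implies $\T_{x_1}\XX(\lambda) \subseteq \ol{\bigcup_{x_2}\APbr{x_1,x_2}}$, and since the latter is $G_{x_1}$-invariant and the $G_{x_1}$-action on $\T_{x_1}\XX(\lambda)/\APbr{x_1}$ (the isotropy representation) is irreducible for the closed orbit, the closure is either $\APbr{x_1}$ (impossible, as $\XX(\lambda)$ spans) or all of $\T_{x_1}\XX(\lambda)$ — and then density of the constructible set $\bigcup_{x_2}\APbr{x_1,x_2}$ in the whole tangent space, again by a dimension count, forces $\dim\XX(\lambda) = N-1$, i.e. $\XX(\lambda) = \PP(\lambda)$ directly. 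I would likely present this second argument, as it keeps everything within the representation-theoretic framework already set up and avoids invoking external classification results.
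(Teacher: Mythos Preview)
Your handling of the equivalences $(a)\Leftrightarrow(b)\Leftrightarrow(c)$ and the implication $(a)\Rightarrow$ HW-dense is fine; the gap is entirely in HW-dense $\Rightarrow(a)$, and both routes you sketch break down at the same place. In your first argument, the dimension count gives only $\dim \T_{x_1}\XX(\lambda)\le \dim\XX(\lambda)+1$, which is trivially satisfied by smoothness and says nothing about $\codim_{\PP}\XX(\lambda)$; you seem to be tacitly replacing the tangent space of $\XX(\lambda)$ by the ambient $\PP(\lambda)$, but HW-density only asserts that chords through $x_1$ cover the \emph{tangent space}, not all of $\PP(\lambda)$. Concretely, HW-density forces the cone from $[v^\lambda]$ over $\XX\cap\PP(T_{v^\lambda}X)$ to be dense in $\PP(T_{v^\lambda}X)$, and this permits $\dim(\XX\cap\PP(T))=\dim\XX-1$ without any codimension constraint on $\XX$ in $\PP$. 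In your second argument, the claim that the isotropy representation of $P=G_{x_1}$ on $T_{x_1}\XX\cong\mk g/\mk p$ is irreducible is simply false in general (take $P=B$, or $P$ the short-root maximal parabolic in $Sp_4$), so you cannot use it to pin down the closure.

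What the paper supplies, and what is genuinely missing from your plan, is Lemma~\ref{Lroot}: if $v_2\in X(\lambda)$ lies in the tangent space $\APbr{v^\lambda,\mk g v^\lambda}$, then the weight $w\lambda$ of $v_2$ differs from $\lambda$ by a root $\beta$, and the corresponding root $SL_2(\beta)$ acts transitively on $\APbr{v^\lambda,v_2}\setminus\{0\}$, so the entire line lies on $X(\lambda)$. Granting this, HW-density gives that $X(\lambda)\cap T_{v^\lambda}X$ is itself dense in $T_{v^\lambda}X$ (not merely the cone over it), whence $\dim X(\lambda)\ge\dim T_{v^\lambda}X=\dim X(\lambda)$ with equality, so $\ol{X(\lambda)}$ equals the linear space $\APbr{\mk g v^\lambda}$; since $X(\lambda)$ spans $V(\lambda)$, this linear space is all of $V(\lambda)$. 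The $SL_2$ step is the substantive representation-theoretic input you are missing.
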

\begin{coro}\label{Cfew} Let $V$ be an effective fundamental HW-dense $G$-module. Then $(G, V)$ is isomorphic to $(\Sp(V), V)$, $(SL(V), V)$ or $(SL(V),V^*)$.\end{coro}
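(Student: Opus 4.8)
The plan is to combine Lemma~\ref{Lfew2} with the known classification of irreducible representations on which a semisimple group acts transitively on the projectivization, restricting attention to the fundamental case. By Lemma~\ref{Lfew2}(c), an HW-dense $G$-module $V=V(\lambda)$ is precisely one for which $G$ acts transitively on $\PP(V)$; equivalently, by part (a), the cone $X(\lambda)$ of highest weight vectors is dense in $V$, so that $\dim X(\lambda)=\dim V$. Since $\XX(\lambda)=G/P$ is the unique closed orbit and equals the set of highest weight lines, transitivity on $\PP(V)$ forces $\XX(\lambda)=\PP(V)$, i.e. the flag variety $G/P$ has the same dimension as $\PP(V)$, namely $\dim V-1$. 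For a fundamental weight $\lambda=\pi_i$, one has $\dim G/P = |\Delta^+| - |\Delta^+(\ul{\mk l})|$, where $\ul{\mk l}$ is the Levi factor obtained by deleting the node $i$; so the condition becomes a purely combinatorial equality between this number and $\dim V(\pi_i)-1$.

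First I would reduce to the case $G$ simple: if $G'=G_1\times\cdots\times G_k$ acts transitively on $\PP(V)$ with $V$ irreducible, then $V=V_1\otimes\cdots\otimes V_k$ and $\PP(V)$ contains $\PP(V_1)\times\cdots\times\PP(V_k)$ as a closed subset of strictly smaller dimension unless $k=1$ (the Segre embedding is never surjective onto $\PP(V)$ for $k\ge 2$ and $\dim V_j\ge 2$, and if some $V_j$ is trivial it can be dropped); moreover effectiveness rules out trivial factors. So $G'$ is simple. Then I would go through the simple types: for $\SL_n$, the fundamental modules with $G/P=\PP(V)$ are $V(\pi_1)=\FF^n$ and $V(\pi_{n-1})=(\FF^n)^*$, since $\Lambda^k\FF^n$ for $2\le k\le n-2$ has $\dim = \binom nk > kn-k^2+1 = \dim\mathrm{Gr}_k(\FF^n)+1$; for $\Sp_{2n}$, the standard module $V(\pi_1)=\FF^{2n}$ works because $Sp_{2n}$ is already transitive on $\PP(\FF^{2n})$, while the higher fundamental modules $\Lambda^k_0$ are too big; for $\SO_n$ one checks the quadric $Q^{n-2}=\XX(\pi_1)$ is a proper subvariety of $\PP(\FF^n)$ (it is not a hyperplane since it is a nondegenerate quadric, and $\dim Q^{n-2}=n-2<n-1$), and the spinor representations likewise have $\dim\XX$ strictly smaller than $\dim\PP$; for the exceptional types one does the same finite check node by node, each time comparing $\dim G/P$ with $\dim V(\pi_i)-1$ from standard tables. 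Since the $\Sp$ and $\SL$ cases already give surjectivity, and in the $\SL$ case the faithful action on $V(\pi_1)$ and on $V(\pi_{n-1})$ gives exactly $(\SL(V),V)$ and $(\SL(V),V^*)$ (note for $n=2$ these coincide and are also the $\Sp_2$ case), assembling the surviving cases yields the list $(\Sp(V),V)$, $(\SL(V),V)$, $(\SL(V),V^*)$ and nothing else.

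The main obstacle is organizing the case-by-case dimension count efficiently rather than doing it blindly: the cleanest route is to observe that $G/P=\PP(V(\pi_i))$ forces $\PP(V(\pi_i))$ to be a homogeneous space of Picard rank one that is isomorphic to a projective space, and the projective spaces among flag varieties $G/P$ with $P$ maximal are classified — they occur only as $\PP^{n-1}=\SL_n/P_1=\SL_n/P_{n-1}$ (and the low-rank coincidences $\PP^1=Sp_2/B=\SO_3/B$, $\PP^3=Sp_4/P_1$ viewed inside $\SO_5$-type data, etc.), all of which are already accounted for by the three families in the statement. Invoking this, the exceptional groups are dispatched immediately (no $G/P$ for $G$ exceptional is a projective space), and the remaining bookkeeping is just matching the abstract group to its action on $V$ or $V^*$, using effectiveness to pin down the group precisely as $\SL(V)$ or $\Sp(V)$.
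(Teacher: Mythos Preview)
Your proposal is correct and follows the same route the paper intends: use Lemma~\ref{Lfew2}(c) to translate HW-density into transitivity of $G$ on $\PP(V)$, and then invoke the classification of irreducible representations with this property. The paper gives no details for this corollary, treating the transitivity classification as known; you supply a proof by the dimension comparison $\dim G/P=\dim\PP(V(\pi_i))$ (equivalently, the classification of flag varieties isomorphic to projective space), which is the standard way to establish it.

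One minor simplification is available in your reduction to simple $G'$: since $\lambda$ is fundamental, it is supported on a single node of the Dynkin diagram, so for $G'=G_1\times\cdots\times G_k$ only one factor acts nontrivially on $V$; effectiveness then forces $k=1$ directly, without appealing to the Segre embedding. Also, your aside ``$\PP^3=Sp_4/P_1$ viewed inside $\SO_5$-type data'' is slightly muddled (the $Spin_5\cong Sp_4$ picture gives the quadric $Q^3\subset\PP^4$, not $\PP^3$), but the point stands: the only $G/P$ isomorphic to a projective space, for $G$ simple and $P$ maximal, are $SL_n/P_1$, $SL_n/P_{n-1}$, and $Sp_{2n}/P_1$, which is exactly the list claimed.
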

Now, we formulate Propositions~\ref{Lh3},~\ref{Lh2} announced at the
beginning of Section~\ref{Sgm}.

\begin{prop}\label{Lh3}Assume that $V(\lambda)$ is rs-continuous. Then $h(\lambda)<3$.\end{prop}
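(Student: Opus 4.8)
The plan is to prove the contrapositive: if $h(\lambda)\geq 3$, then $V(\lambda)$ is wild, in fact already $2$-wild. The key tool is Lemma~\ref{L2w}, which tells us that for any $x\in X(\lambda)$ and any $t\in\mk g$, the point $[x+tx]$ lies in $\sigma_2(\XX(\lambda))$. To show wildness it then suffices to exhibit a single choice of $x$ and $t$ for which $x+tx$ has rank (genuine rank, not border rank) at least $3$; since its border rank is at most $2$, it will be exceptional. Equivalently, it is enough to produce a point in the tangent space $\T_x X(\lambda)$ at a highest weight vector that cannot be written as a sum of two elements of the cone $X(\lambda)$.

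First I would set up coordinates on the relevant tangent space. Fix the Borel $B$ and Cartan $H$, and take $x=v^\lambda$ the highest weight vector. The tangent space $\T_{v^\lambda}X(\lambda)$ is spanned by $v^\lambda$ together with the vectors $f_\alpha v^\lambda$ for negative root vectors $f_\alpha$ (for simple $\alpha$, and more generally $e_{-\beta}v^\lambda$ for positive roots $\beta$ with $(\lambda,\beta^\vee)>0$); these are weight vectors of weights $\lambda-\beta$. So a natural candidate is $v := v^\lambda + \sum_{\alpha\in\Pi'} e_{-\alpha}v^\lambda$ where $\Pi'$ is a suitable set of simple roots — concretely one can take a set of simple roots that are "active" on $\lambda$. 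When $h(\lambda)\geq 3$, the highest weight $\lambda$ has total mark at least $3$, which gives us enough room: either one simple root carries mark $\geq 3$, or there are at least three nodes (with multiplicity) with nonzero mark, and in either case the weight $\lambda$ is "deep enough inside" the dominant chamber that the weights $\lambda$, $\lambda-\alpha$ (for the relevant $\alpha$) are spread out.

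The heart of the matter is a lower bound on rank. I would argue that if $v$ were in $X_2(\lambda)$, then by Lemma~\ref{Lzak?}(b) it is $G$-conjugate to $v^\lambda + v^{w\lambda}$ for some $w\in\mc W$, and in particular $v$ lies in a $2$-dimensional subspace spanned by two extreme weight vectors; hence $\langle v\rangle$ meets the cone $X(\lambda)$ in at least two points, so the secant line through $v$ contains two points of $\XX(\lambda)$. The obstruction I want to exploit is a weight/support argument: the vector $v$ I constructed has a carefully controlled set of weights appearing in it, and I claim no element of $X(\lambda)$ can be written using only those weights unless it is itself (a multiple of) $v^\lambda$. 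Since $X(\lambda)$ is a single closed orbit, any $y\in X(\lambda)$ whose weight-support is contained in that of $v^\lambda + \sum e_{-\alpha} v^\lambda$ must in fact be $H$-conjugate into the highest-weight line unless $\lambda$ is very small — and the hypothesis $h(\lambda)\geq 3$ is exactly what rules out the small cases. More precisely, I expect the argument to reduce, after chopping (Proposition~\ref{Predl} allows passing wildness up from a sub-Dynkin-diagram), to two model situations: $\SL_2$ with $V=\mathrm{S}^k\CC^2$ for $k\geq 3$ (where the rank of a generic tangent vector to the rational normal curve is classically $\geq 3$ — indeed the tangent developable of the twisted cubic is known to consist of points of rank $3$), and $\SL_3$ (or $\SL_2\times\SL_2$) with $\lambda=\pi_1+\pi_2$-type weights summing to $3$. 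Verifying wildness in these base cases is an explicit finite computation.

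The main obstacle I anticipate is the rank lower bound: proving that the explicit tangent vector $v$ genuinely has rank $\geq 3$, rather than just border rank $\leq 2$, requires ruling out all secant lines through $v$, and the set of such lines is governed by the rather rigid structure of $\XX(\lambda)$ as a flag variety. The cleanest route is probably to reduce to the $\SL_2$ case via chopping: if $h(\lambda)\geq 3$, choose a chain in the Dynkin diagram so that the chopped highest weight already has total mark $\geq 3$ supported on a type-$A$ subdiagram; if even one mark is $\geq 3$ we land on $\mathrm{S}^k\CC^2$ with $k\geq 3$ after further chopping, and if the marks are, say, $1+1+1$ along an $A_3$ chain or $2+1$ along an $A_2$ chain, one checks those low-dimensional cases by hand. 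In all of these the relevant tangent-developable-type variety is known to contain points of strictly higher rank than border rank, so $V(\lambda)$ is wild, completing the contrapositive.
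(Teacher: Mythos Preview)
Your overall strategy coincides with the paper's: both use Lemma~\ref{L2w} to place $v^\lambda+tv^\lambda$ in $\sigma_2(\XX(\lambda))$ and then try to show its genuine rank is $\geq 3$. The divergence is entirely in how the rank lower bound is obtained, and here your proposal has two real problems.

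First, the weight-support argument does not work as stated. You claim that no element of $X(\lambda)$ can have weight support contained in $\{\lambda\}\cup\{\lambda-\alpha:\alpha\in\Pi\}$ unless it is proportional to $v^\lambda$. This is false: if $\alpha$ is a simple root with $(\lambda,\alpha^\vee)=1$, then $\exp(f_\alpha)v^\lambda=v^\lambda+f_\alpha v^\lambda$ lies in $X(\lambda)$, has weight support $\{\lambda,\lambda-\alpha\}$, and is not proportional to $v^\lambda$. So one cannot rule out rank $2$ by weight supports alone; the two summands in a rank-$2$ expression may well live in the span of those few weight spaces.

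Second, the chopping reduction via Proposition~\ref{Predl} is viable in principle but your list of base cases is incomplete. Chopping only removes nodes; it never decreases the mark on a surviving node. Hence if $h(\lambda)\geq 3$ and you chop so as to keep $h\geq 3$, you must retain enough marked nodes that the surviving subdiagram can be of type $B_2$, $C_2$, $G_2$ (with marks summing to $\geq 3$), or $B_3$, $C_3$, $A_1\times B_2$, $A_1\times G_2$, etc.\ (with marks $(1,1,1)$), none of which appear in your list. Each of these would require its own verification, so ``one checks those low-dimensional cases by hand'' hides a genuine and somewhat tedious case analysis.

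The paper avoids both issues with a single uniform device. Writing $\lambda=\lambda_1+\lambda_2+\lambda_3$ with each $\lambda_i$ nonzero dominant, one embeds $V(\lambda)$ as the Cartan component of $V(\lambda_1)\otimes V(\lambda_2)\otimes V(\lambda_3)$, so that $v^\lambda=v^{\lambda_1}\otimes v^{\lambda_2}\otimes v^{\lambda_3}$ and every element of $X(\lambda)$ is a pure tensor in the ambient triple product. By the Leibniz rule,
\[
v^\lambda+tv^\lambda
= v^{\lambda_1}\!\otimes v^{\lambda_2}\!\otimes v^{\lambda_3}
+ tv^{\lambda_1}\!\otimes v^{\lambda_2}\!\otimes v^{\lambda_3}
+ v^{\lambda_1}\!\otimes tv^{\lambda_2}\!\otimes v^{\lambda_3}
+ v^{\lambda_1}\!\otimes v^{\lambda_2}\!\otimes tv^{\lambda_3},
\]
and for generic $t$ (so that $[tv^{\lambda_i}]\neq[v^{\lambda_i}]$) this is exactly the ``W-tensor'' of Lemma~\ref{Lv3}, which has tensor rank $>2$. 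Since a rank-$\leq 2$ element of $V(\lambda)$ would be a sum of two pure tensors in the ambient space, this gives $\rk_{\XX(\lambda)}(v^\lambda+tv^\lambda)\geq 3$ with no case analysis whatsoever. This tensor-product embedding is the key idea you are missing; it replaces both your weight argument and your chopping reduction.
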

\begin{prop}\label{Lh2}Let $\lambda_1, \lambda_2\in\Lambda$ be non-zero weights. Assume that $V(\lambda_1+\lambda_2)$ is 2-continuous. Then both $V(\lambda_1), V(\lambda_2)$ are HW-dense.\end{prop}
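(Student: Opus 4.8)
The goal is to show that if the Cartan component $V(\lambda_1+\lambda_2)$ is 2-tame, then each of $V(\lambda_1)$, $V(\lambda_2)$ must be HW-dense (equivalently, by Lemma~\ref{Lfew2}, $G$ acts transitively on each of $\PP(\lambda_1)$, $\PP(\lambda_2)$). The strategy is contrapositive: assuming, say, $V(\lambda_1)$ is \emph{not} HW-dense, I will produce an exceptional vector in $V(\lambda_1+\lambda_2)$, i.e. a point of $\sigma_2(\XX(\lambda_1+\lambda_2))$ whose rank exceeds $2$. The natural source of such vectors is Lemma~\ref{L2w}: for $x\in X(\lambda_1+\lambda_2)$ and $t\in\mk g$, the point $[x+tx]$ lies in $\sigma_2(\XX(\lambda_1+\lambda_2))$. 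So the plan is to choose $x=v^{\lambda_1}\otimes v^{\lambda_2}$ (the highest weight vector of the Cartan component, viewed inside $V(\lambda_1)\otimes V(\lambda_2)$) and a suitable $t\in\mk g$, and then show that $x+tx$ has rank $\ge 3$ with respect to $\XX(\lambda_1+\lambda_2)$, using the failure of HW-density of $V(\lambda_1)$.

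First I would set up the tensor-product picture. Write $\XX(\lambda_1+\lambda_2)$ as the Cartan component inside $\PP(V(\lambda_1)\otimes V(\lambda_2))$; its affine cone consists (at least generically) of decomposable tensors of the form $v_1\otimes v_2$ with $[v_i]\in\XX(\lambda_i)$ — indeed the closed orbit $\XX(\lambda_1+\lambda_2)$ is exactly $\{[v_1\otimes v_2]: [v_i]\in\XX(\lambda_i)\}$ intersected with the Cartan component, and in fact equals $\XX(\lambda_1)\times\XX(\lambda_2)$ under the Segre embedding restricted appropriately. A rank-$\le 2$ expression $x+tx = a_1\otimes b_1 + a_2\otimes b_2$ then says that the tensor $x+tx$, when contracted against $V(\lambda_1)^*$ or $V(\lambda_2)^*$, spans a subspace of dimension $\le 2$ inside $V(\lambda_2)$ resp. $V(\lambda_1)$, with the summands being highest weight vectors. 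Next, I would apply $t\in\mk g$ to $x = v^{\lambda_1}\otimes v^{\lambda_2}$ via the Leibniz rule: $tx = (tv^{\lambda_1})\otimes v^{\lambda_2} + v^{\lambda_1}\otimes(tv^{\lambda_2})$, so
\[
x + tx = v^{\lambda_1}\otimes\bigl(v^{\lambda_2}+tv^{\lambda_2}\bigr) + \bigl(tv^{\lambda_1}\bigr)\otimes v^{\lambda_2}.
\]
Now choose $t$ to lie in the nilradical of the parabolic $P_1$ stabilizing $[v^{\lambda_1}]$, so that $tv^{\lambda_1}$ is a genuine tangent direction to $X(\lambda_1)$ at $v^{\lambda_1}$ that is \emph{not} captured by any secant line from a second point of $\XX(\lambda_1)$ — this is precisely where the failure of HW-density of $V(\lambda_1)$ enters. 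More carefully: since $V(\lambda_1)$ is not HW-dense, by Definition~\ref{Dprehwd} there is a tangent vector $w\in\T_{v^{\lambda_1}}X(\lambda_1)$ not of the form contributing to $\APbr{v^{\lambda_1},x_2}$ for any $x_2\in X(\lambda_1)$; pick $t\in\mk g$ with $tv^{\lambda_1}=w$ (possible since $\T_{v^{\lambda_1}}X(\lambda_1)=\mk g\cdot v^{\lambda_1}$).

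The heart of the argument, and the main obstacle, is to convert ``$x+tx$ is not on a secant line of $\XX(\lambda_1)$ inside the first factor'' into ``$[x+tx]$ has rank $\ge 3$ in $\PP(\lambda_1+\lambda_2)$.'' The subtlety is that a rank-$2$ decomposition $x+tx = a_1\otimes b_1+a_2\otimes b_2$ need not have $b_1,b_2$ proportional to $v^{\lambda_2}$, so one cannot immediately read off a secant decomposition of $w = tv^{\lambda_1}$ in $V(\lambda_1)$; one must analyze the support of $x+tx$ as a bilinear form and use that $v^{\lambda_2}+tv^{\lambda_2}$ and $v^{\lambda_2}$ are linearly independent (generically) to force $a_1,a_2\in\langle v^{\lambda_1}, w\rangle$, hence that $w$ itself lies on a secant line of $\XX(\lambda_1)$, contradiction. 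I expect this linear-algebra reduction — controlling the flattening of the tensor and ensuring the choice of $t$ genuinely obstructs a rank-$2$ decomposition — to require the most care, and it may be cleaner to argue at the level of border rank vs.\ rank directly: $[x+tx]\in\sigma_2$ always by Lemma~\ref{L2w}, so it suffices to exhibit \emph{one} such $t$ for which $\rk[x+tx]\ge 3$, and for this one can also invoke the construction of exceptional vectors from Lemma~\ref{L2w} together with the explicit description in Lemma~\ref{Lzak?} of elements of $\XX_2(\lambda_1+\lambda_2)$ as $G$-translates of $[v^{\lambda_1+\lambda_2}+v^{w(\lambda_1+\lambda_2)}]$, checking that our $[x+tx]$ is not of this form. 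Finally, by symmetry the same argument applied to the second factor shows $V(\lambda_2)$ must also be HW-dense, completing the proof.
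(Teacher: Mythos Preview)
Your approach is essentially the paper's, just run contrapositively. Both arguments embed $V(\lambda_1+\lambda_2)$ as the Cartan component of $V(\lambda_1)\otimes V(\lambda_2)$, take $v^\lambda=v^{\lambda_1}\otimes v^{\lambda_2}$, consider $v^\lambda+tv^\lambda$ for $t\in\mk g$ (which lies in $\sigma_2$ by Lemma~\ref{L2w}), and use the flattenings $\Ima_i$: any rank-$\le 2$ expression $g_1v^\lambda+g_2v^\lambda$ forces $g_jv^{\lambda_i}\in\APbr{v^{\lambda_i},tv^{\lambda_i}}$, with at least one $g_jv^{\lambda_i}$ not proportional to $v^{\lambda_i}$. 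The paper argues \emph{directly}: for $t$ in the open set where $[tv^{\lambda_i}]\ne[v^{\lambda_i}]$ for both $i$, 2-tameness supplies the decomposition, and the flattening exhibits a new highest weight vector in each tangent line $\APbr{v^{\lambda_i},tv^{\lambda_i}}$, which is exactly HW-density via Lemma~\ref{Ldhwd}. Your contrapositive works too, but you must argue that the set of ``bad'' $t$ for $V(\lambda_1)$ is constructible and dense (not merely nonempty), so that it meets the open locus where $[tv^{\lambda_1}]\ne[v^{\lambda_1}]$ and $[tv^{\lambda_2}]\ne[v^{\lambda_2}]$; without the latter condition $\Ima_1(v^\lambda+tv^\lambda)$ need not be two-dimensional and your contradiction evaporates. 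The direct version sidesteps this bookkeeping.

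Two small corrections. Your suggestion to take $t$ in the nilradical of the parabolic $P_1$ stabilizing $[v^{\lambda_1}]$ is backwards: that nilradical annihilates $v^{\lambda_1}$; tangent directions come from the \emph{opposite} nilradical. In any case no such restriction is needed --- any $t\in\mk g$ with $tv^{\lambda_1}\notin\FF v^{\lambda_1}$ will do. Second, your fallback idea of using Lemma~\ref{Lzak?} to check that $[x+tx]$ is not $G$-conjugate to some $[v^\lambda+v^{w\lambda}]$ is a detour: the flattening argument you already sketched \emph{is} the proof, and is exactly what the paper does.
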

Proofs of Propositions~\ref{Lh3},~\ref{Lh2} are presented below in
this subsection. Corollary~\ref{Cfew} and Proposition~\ref{Lh2}
immediately imply the following proposition.
\begin{prop}\label{Llist}Assume that $V(\lambda)$ is an effective 2-continuous module and $h(\lambda)=2$. Then $(G, V(\lambda))$ appears in the following list:
\begin{center}1) $(SL(V_1)\times SL(V_2), V_1\otimes V_2)$;\hspace{10pt}2) $(SL(V_1)\times\Sp(V_2), V_1\otimes V_2)$;\hspace{10pt}3) $(\Sp(V_1)\times\Sp(V_2), V_1\otimes V_2)$;\\4) $(SL(V),~$S$^2V)$;\hspace{10pt}5) $(SL(V), \mathfrak{sl}(V))$;\hspace{10pt}6) $(\Sp(V), \mathfrak{sp}(V))\cong(Sp(V),$ S$^2V)$\end{center}(here $\mathfrak{sl}(V), \mathfrak{sp}(V)$ denote the adjoint modules of the corresponding groups).\end{prop}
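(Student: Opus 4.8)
The statement follows by combining the two earlier results, Proposition~\ref{Lh3} and Proposition~\ref{Lh2}, with the classification of HW-dense fundamental modules in Corollary~\ref{Cfew}. The argument is essentially bookkeeping once those inputs are in hand, so the plan is to organize the bookkeeping cleanly.

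First I would record the setup: $V=V(\lambda)$ is effective and $2$-tame with $h(\lambda)=2$. Since $h(\lambda)=2$, the highest weight is either $\lambda=2\pi_i$ for a single node $i$, or $\lambda=\pi_i+\pi_j$ with $i\neq j$. In both cases $\lambda=\lambda_1+\lambda_2$ with $\lambda_1,\lambda_2$ non-zero fundamental weights (take $\lambda_1=\lambda_2=\pi_i$ in the first case), and $V(\lambda)$ is the Cartan component of $V(\lambda_1)\otimes V(\lambda_2)$. By Proposition~\ref{Lh2}, both $V(\lambda_1)$ and $V(\lambda_2)$ are HW-dense. Now I would pass to the effective quotient: the kernel of $G\to GL(V(\lambda_i))$ acts trivially, so on each simple factor of $G'$ we get an effective HW-dense fundamental module (or the one-dimensional trivial module, which is absorbed). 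Applying Corollary~\ref{Cfew} to each simple factor, the only possibilities for an effective HW-dense fundamental module of a simple group are $(\SL(W),W)$, $(\SL(W),W^*)$, and $(\Sp(W),W)$. In particular $G'$ can have at most two simple factors, since $V(\lambda_1)$ and $V(\lambda_2)$ are each supported on at most one node (being fundamental), and their supports together must cover all of $G'$ by effectivity.

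Next I would split into the case $\lambda=\pi_i+\pi_j$, $i\neq j$, versus $\lambda=2\pi_i$. In the first case, if $\alpha_i$ and $\alpha_j$ lie in different simple components of the Dynkin diagram, then $G'=G_1\times G_2$, $V(\lambda_1)$ is an effective HW-dense fundamental module of $G_1$ and $V(\lambda_2)$ one of $G_2$; running through the three options from Corollary~\ref{Cfew} for each factor gives exactly items (1), (2), (3) of the list ($\SL\times\SL$, $\SL\times\Sp$, $\Sp\times\Sp$ acting on $V_1\otimes V_2$); one checks that $V_1\otimes V_2$ is indeed irreducible with highest weight $\pi_1\oplus\pi_1$. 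If instead $\alpha_i,\alpha_j$ lie in the same component, then $G'$ is simple and $\lambda=\pi_i+\pi_j$ must simultaneously be (up to duality) of the form forced by HW-density of $V(\pi_i)$ and of $V(\pi_j)$; the only way a simple group has two fundamental nodes each giving $\SL(W)$, $\SL(W)^*$ or $\Sp(W)$ is type $A_{n-1}$ with $\{i,j\}=\{1,n-1\}$, yielding $\lambda=\pi_1+\pi_{n-1}$, i.e. $V=\mathfrak{sl}(V)$, item (5). In the case $\lambda=2\pi_i$, the group $G'$ is simple, $\lambda_1=\lambda_2=\pi_i$, and HW-density of $V(\pi_i)$ forces via Corollary~\ref{Cfew} that $(G',V(\pi_i))$ is $(\SL(W),W)$, $(\SL(W),W^*)$ or $(\Sp(W),W)$; the Cartan component of $\mathrm{Sym}$ of these is $\mathrm{S}^2W$ for $\SL(W)$ (item (4), the two dual choices giving $\mathrm S^2V$ and $(\mathrm S^2V)^*$, which are isomorphic as abstract modules up to the outer automorphism and so both are listed under (4)), and $\mathrm S^2W\cong\mathfrak{sp}(W)$ for $\Sp(W)$ (item (6)).

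The main obstacle is purely combinatorial rather than conceptual: making sure the case analysis is exhaustive and non-redundant. Concretely, I expect the delicate point to be the ``same component'' subcase of $\lambda=\pi_i+\pi_j$ — one must verify that no simple Dynkin diagram outside type $A$ admits two distinct fundamental nodes both of whose fundamental modules are HW-dense, and that in type $A$ the only such pair is $\{1,n-1\}$. This is immediate from the explicit list in Corollary~\ref{Cfew} (the only HW-dense fundamental modules of a simple group are the defining representations of $\SL$ and $\Sp$ and the dual of the former), but it should be stated carefully. Once this is checked, collecting the cases gives precisely the six families (1)--(6), completing the proof.
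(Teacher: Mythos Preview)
Your proposal is correct and follows exactly the paper's approach: the paper's entire proof is the single sentence ``Corollary~\ref{Cfew} and Proposition~\ref{Lh2} immediately imply the following proposition,'' and you have simply written out the case analysis that this sentence leaves implicit. The only superfluous ingredient is your mention of Proposition~\ref{Lh3}, which is not needed here since $h(\lambda)=2$ is already part of the hypothesis.
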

All modules listed in Proposition~\ref{Llist} are rs-continuous. Cases 1) and
4) of Lemma~\ref{Llist} are known to be rs-continuous. Cases 2), 3) have the
same secant varieties as case 1), and hence are also rs-continuous. Cases 5)
and 6) are rs-continuous due to~\cite{Kaji-SecAdjVar}. Therefore
Proposition~\ref{Llist} explicitly lists all non-fundamental rs-continuous
modules.

The rest of the current subsection is dedicated to the proofs of
Propositions~\ref{Lh3},~\ref{Lh2} and
Lemma~\ref{Lfew2}.\label{SSplh3} We need the following lemma for the
$GL(V_1)\times GL(V_2)\times GL(V_3)$-module $V_1\otimes V_2\otimes
V_3$, where $V_1, V_2, V_3$ are finite-dimensional vector spaces.
\begin{lemma}\label{Lv3}Let $x_i, y_i$ be linearly independent vectors in $V_i$ for $i=1, 2, 3$. Then \begin{center}$T:=x_1\otimes x_2\otimes x_3+y_1\otimes x_2\otimes x_3+x_1\otimes y_2\otimes x_3+x_1\otimes x_2\otimes y_3\ne v_1\otimes v_2\otimes v_3+w_1\otimes w_2\otimes w_3$\end{center}for all $v_i, w_i\in V_i~(i=1, 2, 3)$. In other words, $\rk T>2$.\end{lemma}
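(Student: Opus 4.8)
The plan is to prove the stronger statement that $\rk T \ge 3$ by exhibiting an explicit $2\times 2$-type multilinear obstruction, namely by restricting to a suitable $2\times 2\times 2$ subtensor and invoking the known structure of the smallest wild Segre case. Concretely, $T$ already lies in the $8$-dimensional subspace $\langle x_1,y_1\rangle\otimes\langle x_2,y_2\rangle\otimes\langle x_3,y_3\rangle$, so choosing bases $x_i=e_0^{(i)},\ y_i=e_1^{(i)}$ identifies $T$ with the tensor $e_{000}+e_{100}+e_{010}+e_{001}$ in $(\FF^2)^{\otimes 3}$. This is (up to obvious symmetry and scaling) the classical tensor $W = e_{001}+e_{010}+e_{100}$ shifted by $e_{000}$; I would instead directly analyze $T' := e_{000}+e_{100}+e_{010}+e_{001}$ and show $\rk T' = 3$. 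Note $T'$ is the "generic" tangent vector from Lemma~\ref{L2w}: writing $x = x_1\otimes x_2\otimes x_3$ and $\xi = (y_1 x_1^{-1})\otimes 1\otimes 1 + 1\otimes(y_2 x_2^{-1})\otimes 1 + 1\otimes 1\otimes(y_3 x_3^{-1})$ acting in the first factors, $T' = x + \xi x$, so border rank $\le 2$ is clear; the content is that the actual rank is $3$.

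The key step is to show $\rk T' \neq 2$, i.e. $T' \ne v_1\otimes v_2\otimes v_3 + w_1\otimes w_2\otimes w_3$. Suppose for contradiction such a decomposition exists. Contracting $T'$ with a covector $\phi$ in the third slot gives a $2\times 2$ matrix: $T'_\phi = \phi(x_3)\,(x_1\otimes x_2 + y_1\otimes x_2 + x_1\otimes y_2) + \phi(y_3)\,(x_1\otimes x_2)$, while on the other side $T'_\phi = \phi(v_3)\,v_1\otimes v_2 + \phi(w_3)\,w_1\otimes w_2$ has rank $\le 2$ always, so matrix rank gives no contradiction directly. Instead I would use the standard flattening/catalecticant argument: consider the two $2\times 2$ slices $M_0 := T'(\cdot,\cdot,x_3^*)$ and $M_1 := T'(\cdot,\cdot,y_3^*)$ of $T'$ along the third factor, where $x_3^*, y_3^*$ is the dual basis. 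A rank $\le 2$ tensor $v_1\otimes v_2\otimes v_3 + w_1\otimes w_2\otimes w_3$ has the property that its two slices $M_0, M_1$ span a pencil of $2\times2$ matrices all of whose members have rank $\le 2$ and, crucially, which is simultaneously diagonalizable (generic pencil) or satisfies a degenerate Kronecker normal form; in particular $M_1 M_0^{-1}$ (when $M_0$ is invertible) must be diagonalizable. Here $M_0 = \begin{pmatrix} 1 & 1\\ 1 & 0\end{pmatrix}$ in the basis ordered $(x,y)$ — reading off coefficients of $e_{000}, e_{010}, e_{100}, (e_{110}=0)$ — and $M_1 = \begin{pmatrix} 1 & 0\\ 0 & 0\end{pmatrix}$, the coefficient of $e_{001}$. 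Then $M_0$ is invertible and $M_1 M_0^{-1} = \begin{pmatrix}1&0\\0&0\end{pmatrix}\begin{pmatrix}0&1\\1&-1\end{pmatrix} = \begin{pmatrix}0&1\\0&0\end{pmatrix}$, which is nilpotent and non-diagonalizable; this contradicts rank $\le 2$. Hence $\rk T' \ge 3$, and a rank $3$ decomposition is immediate (e.g.\ $T' = (x_1+y_1)\otimes x_2\otimes x_3 + x_1\otimes y_2\otimes x_3 + x_1\otimes x_2\otimes y_3$), so $\rk T' = 3$.

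The main obstacle is making the "pencil of slices must be diagonalizable" step rigorous and self-contained, since the Kronecker classification of matrix pencils has several normal forms and one must check the nilpotent case $M_1M_0^{-1}=\left(\begin{smallmatrix}0&1\\0&0\end{smallmatrix}\right)$ is genuinely excluded. I would sidestep this by a direct elementary argument: if $T' = a\otimes b\otimes c + a'\otimes b'\otimes c'$ with all slices as computed, then both $c$ and $c'$ must be nonzero and $M_0 = c_0\, a\otimes b + c_0'\, a'\otimes b'$, $M_1 = c_1\, a\otimes b + c_1'\, a'\otimes b'$ where $c = (c_0,c_1)$, $c' = (c_0',c_1')$ in the $(x_3^*,y_3^*)$ basis. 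Since $M_0$ is invertible, $a\otimes b$ and $a'\otimes b'$ are linearly independent, so $\{a,a'\}$ and $\{b,b'\}$ are each bases; expressing $M_1 M_0^{-1}$ in the basis $\{a,a'\}$ shows it is conjugate to $\mathrm{diag}(c_1/c_0, c_1'/c_0')$ (one checks $M_0^{-1}$ is diagonal in an adapted sense), hence diagonalizable — contradicting that it equals the non-diagonalizable matrix $\left(\begin{smallmatrix}0&1\\0&0\end{smallmatrix}\right)$. This reduces everything to a finite, transparent linear-algebra computation over $\FF$.
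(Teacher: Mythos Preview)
Your argument is correct and more self-contained than the paper's. The paper proceeds in two steps: first it uses the flattening images $\Ima_i(T)=\langle x_i,y_i\rangle$ to show that any hypothetical decomposition $T=v_1\otimes v_2\otimes v_3+w_1\otimes w_2\otimes w_3$ must satisfy $\langle v_i,w_i\rangle=\langle x_i,y_i\rangle$, thereby reducing to $\dim V_i=2$; second, it simply cites the $2\times2\times2$ case as ``well known'' in \cite{Landsberg-2012-book}. You take essentially the same first step (noting $T$ lives in $\langle x_1,y_1\rangle\otimes\langle x_2,y_2\rangle\otimes\langle x_3,y_3\rangle$), but then you actually \emph{prove} the $2\times2\times2$ case by the pencil-of-slices argument: with $M_0$ invertible, a rank-$2$ decomposition forces $M_1M_0^{-1}$ to be diagonalizable in the basis $\{a,a'\}$, contradicting the nilpotent Jordan block you compute. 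That is a genuine improvement in completeness over the paper's citation.

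One small point to tighten: when you pass from the ambient $V_1\otimes V_2\otimes V_3$ to $(\FF^2)^{\otimes3}$, you should say explicitly why a rank-$2$ expression in the big space yields one in the small space. The paper's $\Ima_i$ argument does exactly this (it pins $v_i,w_i$ inside $\langle x_i,y_i\rangle$); alternatively, choose complements $V_i=\langle x_i,y_i\rangle\oplus W_i$ and apply the induced projection, which fixes $T$ and sends rank-$1$ tensors to rank-$\le1$ tensors. Either way it is one line, but as written your reduction is asserted rather than justified.
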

\begin{proof}Assume on the contrary that \begin{center}$T= v_1\otimes v_2\otimes v_3+w_1\otimes w_2\otimes w_3$\end{center}for some $v_i, w_i\in V_i, i=1, 2, 3$.
We have $V_1\otimes V_2\otimes V_3\cong\Hom(V_2^*\otimes V_3^*,
V_1)$. Therefore for any $x\in V_1\otimes V_2\otimes V_3$ we can
define $\Ima_1(x)\subset V_1$ as the image of the corresponding
homomorphism from $\Hom(V_2^*\otimes V_3^*, V_1)$. Similarly we
define $\Ima_2(x)$ and $\Ima_3(x)$. We have
\begin{center}$\Ima_i(T)=\APbr{x_i, y_i}, i=1, 2, 3$.\end{center}On the other hand, if $v_1\otimes v_2\otimes v_3+w_1\otimes w_2\otimes w_3\ne 0$,
\begin{center}$\Ima_i(v_1\otimes v_2\otimes v_3+w_1\otimes w_2\otimes w_3)\subset\APbr{v_i, w_i} (i=1, 2, 3)$.\end{center}
Therefore, $\APbr{x_i, y_i}=\APbr{v_i, w_i}, i=1, 2, 3$. Hence,
without loss of generality, we may assume that $$V_i=\APbr{x_i,
y_i}=\APbr{v_i, w_i} (i=1, 2, 3),$$ i.e. that $\dim V_i=2~(i=1, 2,
3)$. For two-dimensional spaces $V_1, V_2, V_3$ this lemma is well
known; see e.g.~\cite{Landsberg-2012-book}.\end{proof} Note that for
$(G, V(\lambda))=(SL(V_1)\times SL(V_2)\times SL(V_3), V_1\otimes
V_2\otimes V_3)$ we have $h(\lambda)=3$. Moreover, Lemma~\ref{Lv3}
is essentially a particular case of Proposition~\ref{Lh3}.
\begin{proof}[Proof of Proposition~\ref{Lh3}]Assume on the contrary that $h(\lambda)\ge 3$. Then there exist non-zero weights $\lambda_1, \lambda_2, \lambda_3\in\Lambda^+$ such that $\lambda=\lambda_1+\lambda_2+\lambda_3$. Then \begin{center}$v^{\lambda_1}\otimes v^{\lambda_2}\otimes v^{\lambda_3}\in V(\lambda_1)\otimes V(\lambda_2)\otimes V(\lambda_3)$\end{center} is a highest weight vector of weight $\lambda=\lambda_1+\lambda_2+\lambda_3$. Therefore the smallest $G$-submodule of $V(\lambda_1)\otimes V(\lambda_2)\otimes V(\lambda_3)$ containing $v^{\lambda_1}\otimes v^{\lambda_2}\otimes v^{\lambda_3}$ is isomorphic to $V(\lambda)$. We identify $V(\lambda)$ with this submodule of $V(\lambda_1)\otimes V(\lambda_2)\otimes V(\lambda_3)$ and set $$v^{\lambda}:=v^{\lambda_1}\otimes v^{\lambda_2}\otimes
v^{\lambda_3}.$$
By Lemma~\ref{L2w} we have
$$v^\lambda+tv^\lambda\in\overline{X_2(\lambda)}(=\sigma_2(X(\lambda)))$$
for any $t\in\frak g$. Therefore
\begin{equation}\ul{\rk}_{\XX(\lambda)}(v^\lambda+tv^\lambda)\le2\label{Ebrk2}\end{equation}for any
$t\in\frak g$. By the Leibnitz rule we
have\begin{center}$T_\lambda:=v^\lambda+tv^\lambda=v^{\lambda_1}\otimes
v^{\lambda_2}\otimes v^{\lambda_3}+tv^{\lambda_1}\otimes
v^{\lambda_2}\otimes v^{\lambda_3}+v^{\lambda_1}\otimes
tv^{\lambda_2}\otimes v^{\lambda_3}+v^{\lambda_1}\otimes
v^{\lambda_2}\otimes tv^{\lambda_3}$\end{center}(note that
$T_\lambda$ is of the form $T$ of Lemma~\ref{Lv3}). We claim that
\begin{center}$v^\lambda+tv^\lambda\not\in X_2(\lambda)\cup X(\lambda)\cup \{0\}$\end{center}
for all $t\in U$ from some open subset $U$ of $\frak g$. As
$\lambda_i\ne 0$, there exists some open subset $U\subset\frak g$
such that $[tv^{\lambda_i}]\ne[v^{\lambda_i}]~(i=1, 2, 3)$ for all
$t\in U$. We fix $t\in U$. We claim that
\begin{equation}\rk(v^\lambda+tv^\lambda)\ge3.\label{Erk3}\end{equation} Assume on the contrary that
$v^\lambda+tv^\lambda\in X_2(\lambda)\cup X(\lambda)\cup 0$, then
\begin{center}$v^\lambda+tv^\lambda=g_1(v^{\lambda_1})\otimes
g_1(v^{\lambda_2})\otimes
g_1(v^{\lambda_3})+g_2(v^{\lambda_1})\otimes
g_2(v^{\lambda_2})\otimes g_2(v^{\lambda_3})$\end{center}for some
$g_1, g_2\in G$ and thus
$$T_\lambda=v^\lambda+tv^\lambda=v_1\otimes v_2\otimes
v_3+w_1\otimes w_2\otimes w_3$$for some $v_1, v_2, v_3, w_1, w_2,
w_3\in V(\lambda)$. This contradicts the statement of
Lemma~\ref{Lv3}. Comparing~(\ref{Ebrk2}) and~(\ref{Erk3}) we see
that $v^\lambda+tv^\lambda$ is an exceptional vector of $V(\lambda)$
and thus $V(\lambda)$ is 2-discontinuous.\end{proof}
\begin{proof}[Proof of Proposition~\ref{Lh2}]We use notation analogous to the one in Proposition~\ref{Lh3}. Fix $\lambda:=\lambda_1+\lambda_2$. Set $$v^\lambda:=v^{\lambda_1}\otimes v^{\lambda_2}\in V(\lambda_1)\otimes V(\lambda_2).$$ This defines a canonical embedding $V(\lambda)\to V(\lambda_1)\otimes V(\lambda_2)$. As
$\lambda_i\ne 0$, there exists some open subset $U\subset\frak g$
such that $[tv^{\lambda_i}]\ne[v^{\lambda_i}]~(i=1, 2)$ for all
$t\in U$. We fix $t\in U$. Repeating the argument
preceding~(\ref{Ebrk2}) we deduce that~(\ref{Ebrk2}) holds in the present
notation. Hence, since $V(\lambda)$ is 2-continuous we have
\begin{center}$v^\lambda+tv^\lambda=g_1v^\lambda+g_2v^\lambda$ or
$v^\lambda+tv^\lambda=g_1v^\lambda$\end{center} for some $g_1,
g_2\in G$. Then
\begin{center}$\Ima_i(v^\lambda+tv^\lambda)=\APbr{v^{\lambda_i}, tv^{\lambda_i}}\;(i=1, 2),$\end{center}(for the definition of $\Ima_i$ see proof of Lemma~\ref{Lv3}) and, if $g_1v^\lambda+g_2v^\lambda\ne 0$,\begin{center}$\Ima_i(g_1v^\lambda+g_2v^\lambda)=\APbr{g_1v^{\lambda_i}, g_2v^{\lambda_i}}\;(i=1, 2)$ or $\Ima_i(g_1v^\lambda+g_2v^\lambda)=\APbr{g_1v^{\lambda_i}}$.\end{center}
Hence $g_1v^{\lambda_i}, g_2v^{\lambda_i}\in\APbr{v^{\lambda_i},
tv^{\lambda_i}}$ and either
\begin{center}$[g_1v^{\lambda_i}]\ne[v^{\lambda_i}]$ or
$[g_2v^{\lambda_i}]\ne[v^{\lambda_i}]~(i=1,
2)$.\end{center}Therefore both $V(\lambda_1)$ and $V(\lambda_2)$ are
HW-dense. This completes the proof.\end{proof}

To prove Lemma~\ref{Lfew2} we need two technical lemmas. The first gives a reformulation of
Definition~\ref{Dhwd}.
\begin{lemma}\label{Ldhwd}A simple $G$-module $V(\lambda)$ is HW-dense
if and only if there exists an open subset $U\subset\frak g$ such
that, for all $t\in U$, there exists an element $$v\in
X(\lambda)\cap \APbr{ v^\lambda, tv^\lambda}$$ such that $[v]\ne
[v^\lambda]$ (note that if such an element $v$ exists, then
$[tv^\lambda]\ne [v^\lambda]$).\end{lemma}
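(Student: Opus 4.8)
The plan is to fix the point $x_1=[v^\lambda]$, reduce the ``for every $x_1\in\XX(\lambda)$'' clause in Definition~\ref{Dprehwd} to this one point by $G$-homogeneity, and then transport the geometric condition at $[v^\lambda]$ to the infinitesimal condition in $\frak g$ by means of the orbit map $t\mapsto tv^\lambda$.

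\emph{Reduction to the highest weight point.} Since $\XX(\lambda)$ is a single $G$-orbit, every $x_1\in\XX(\lambda)$ is of the form $g[v^\lambda]$ for some $g\in G$. The linear automorphism $g$ of $\PP(\lambda)$ preserves $\XX(\lambda)$, carries the embedded tangent space $\T_{[v^\lambda]}\XX(\lambda)$ isomorphically onto $\T_{x_1}\XX(\lambda)$, and sends a line $\APbr{[v^\lambda],x_2}$ to $\APbr{x_1,gx_2}$ with $gx_2\in\XX(\lambda)$. Hence an open subset $\widetilde U\subset\T_{[v^\lambda]}\XX(\lambda)$ witnessing the HW-density property at $[v^\lambda]$ yields, via $g$, such a witness at every $x_1$. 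Therefore $V(\lambda)$ is HW-dense if and only if the property of Definition~\ref{Dprehwd} holds at the single point $x_1=[v^\lambda]$, and it suffices to prove that this is equivalent to the condition in the statement.

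\emph{The orbit map.} Because $\lambda\ne 0$, the character $\lambda\colon H\to\FF^\times$ is surjective, so $\FF^\times v^\lambda\subset X(\lambda)=Gv^\lambda$; thus $X(\lambda)$ is the affine cone over $\XX(\lambda)$ with the origin deleted, and in particular $v^\lambda\in\frak gv^\lambda=\T_{v^\lambda}X(\lambda)$, so $\T_{[v^\lambda]}\XX(\lambda)=\PP(\frak gv^\lambda)$. Consider
$$\phi\colon\ \{t\in\frak g:\ v^\lambda+tv^\lambda\ne 0\}\ \lw\ \PP(\frak gv^\lambda),\qquad t\ \lo\ [\,v^\lambda+tv^\lambda\,].$$
Its domain is a nonempty Zariski-open subset of $\frak g$ (it contains $t=0$). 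Writing $\phi$ as the composite of the affine map $t\mapsto v^\lambda+tv^\lambda$, whose image is $v^\lambda+\frak gv^\lambda=\frak gv^\lambda$ since $v^\lambda\in\frak gv^\lambda$, with the projection $\frak gv^\lambda\setminus\{0\}\to\PP(\frak gv^\lambda)$, one sees that $\phi$ is surjective and open: a surjective linear map of affine spaces is flat of finite type, hence open, and the projectivization map is open. Finally, the locus $\{t\in\frak g:[tv^\lambda]=[v^\lambda]\}$ is a proper Zariski-closed subset (it is proper because, e.g., a suitable lowering operator $t$ has $tv^\lambda\notin\FF v^\lambda$).

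\emph{The equivalence at $[v^\lambda]$.} Suppose the HW-density property holds at $[v^\lambda]$ with open witness $\widetilde U\subset\PP(\frak gv^\lambda)$, and put $U=\phi^{-1}(\widetilde U)\setminus\{t:[tv^\lambda]=[v^\lambda]\}$, a nonempty open subset of $\frak g$ by the last paragraph. For $t\in U$ the point $\phi(t)=[v^\lambda+tv^\lambda]$ lies in $\widetilde U$ and differs from $[v^\lambda]$; hence $\phi(t)\in\APbr{[v^\lambda],x_2}$ for some $x_2=[v_2]\in\XX(\lambda)$, necessarily with $x_2\ne[v^\lambda]$, so $\APbr{v^\lambda,v_2}=\APbr{v^\lambda,v^\lambda+tv^\lambda}=\APbr{v^\lambda,tv^\lambda}$, giving $v_2\in X(\lambda)\cap\APbr{v^\lambda,tv^\lambda}$ with $[v_2]\ne[v^\lambda]$. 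Conversely, given a nonempty open $U\subset\frak g$ as in the statement, the parenthetical remark shows $[tv^\lambda]\ne[v^\lambda]$ for all $t\in U$, so $\APbr{v^\lambda,tv^\lambda}$ is two-dimensional and $\phi$ is defined on $U$; set $\widetilde U=\phi(U)$, which is open in $\PP(\frak gv^\lambda)=\T_{[v^\lambda]}\XX(\lambda)$ by openness of $\phi$. Any $v\in\widetilde U$ equals $\phi(t)$ for some $t\in U$, and choosing $v_2\in X(\lambda)\cap\APbr{v^\lambda,tv^\lambda}$ with $[v_2]\ne[v^\lambda]$ we get $\APbr{v^\lambda,v_2}=\APbr{v^\lambda,tv^\lambda}\ni v^\lambda+tv^\lambda$, so $v\in\APbr{[v^\lambda],[v_2]}$ with $[v_2]\in\XX(\lambda)$. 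Thus the HW-density property holds at $[v^\lambda]$, and combined with the homogeneity reduction this proves the lemma. The only non-formal ingredient is the openness and surjectivity of $\phi$ — i.e. the exact correspondence between Zariski-open subsets of the projective tangent space at $[v^\lambda]$ and Zariski-open subsets of $\frak g$; the rest is bookkeeping with orbit maps and $2$-planes.
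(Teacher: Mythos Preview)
Your argument is correct. The paper does not actually prove this lemma: its entire proof reads ``This is fairy easy exercise for Lie algebras-Lie groups formalism. We omit it.'' Your proposal therefore supplies precisely the details the authors chose to skip, and your route --- reduce to $x_1=[v^\lambda]$ by $G$-homogeneity, identify the embedded tangent space with $\PP(\frak g v^\lambda)$ via the orbit map, and transfer open sets back and forth using openness and surjectivity of $t\mapsto[v^\lambda+tv^\lambda]$ --- is exactly the kind of Lie-theoretic bookkeeping the paper is alluding to.

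One small point of notation: the set you call $\{t\in\frak g:[tv^\lambda]=[v^\lambda]\}$ should, strictly under the paper's convention $[0]:=0$, be written as $\{t:tv^\lambda\in\FF v^\lambda\}$ (including $tv^\lambda=0$); this is the linear subspace you actually need to remove, and it is indeed proper and closed. Also, Definition~\ref{Dprehwd} is slightly ambiguous about whether ``the tangent space'' means the linear subspace $\frak g v^\lambda\subset V$ or its projectivization; you work with the projective version, which is fine since open subsets correspond under the quotient map. Neither point affects the validity of your argument.
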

\begin{proof}This is a fairy easy exercise for Lie algebras-Lie groups formalism. We omit it.\end{proof}

\begin{lemma}\label{Lroot}Fix $v_1\in X(\lambda)$. Assume that there exists $v_2\in X(\lambda)$ such that $v_2\in\APbr{v_1, \frak gv_1}$. Then all non-zero vectors of $\APbr{v_1, v_2}$ belong to $X(\lambda)$.\end{lemma}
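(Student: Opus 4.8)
The plan is to reduce, via part a) of Lemma~\ref{Lzak?}, to the model situation $v_1=v^\lambda$ and $v_2=v^{w\lambda}$, and then to run a rank-one $\mathfrak{sl}_2$-computation. We may assume $v_2\notin\FF v_1$, since otherwise the non-zero vectors of $\APbr{v_1,v_2}$ are the non-zero multiples of $v_1\in X(\lambda)$. By Lemma~\ref{Lzak?}(a) there is a $g\in G$ with $gv_1=c_1v^\lambda$ and $gv_2=c_2v^{w\lambda}$ for some $w\in\mathcal W$ and $c_1,c_2\in\FF^*$. Since $\mathrm{Ad}(g)\mathfrak{g}=\mathfrak{g}$ and $\lambda\ne0$ (so that $\mathfrak{h}v^\lambda=\FF v^\lambda\subset\mathfrak{g}v^\lambda$), we get $g\APbr{v_1,\mathfrak{g}v_1}=\APbr{v^\lambda,\mathfrak{g}v^\lambda}=\mathfrak{g}v^\lambda$, hence $v^{w\lambda}\in\mathfrak{g}v^\lambda$, and moreover $w\lambda\ne\lambda$. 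As $G$ preserves $X(\lambda)$ and $g\APbr{v_1,v_2}=\APbr{v^\lambda,v^{w\lambda}}$, it suffices to prove that every non-zero vector of $\APbr{v^\lambda,v^{w\lambda}}$ lies in $X(\lambda)$, and then apply $g^{-1}$.

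Next I would read off the weight decomposition $\mathfrak{g}v^\lambda=\FF v^\lambda\oplus\bigoplus_{\beta\in\Delta^+}\mathfrak{g}_{-\beta}v^\lambda$ (using $\mathfrak{n}^+v^\lambda=0$), whose summands lie in the pairwise distinct weight spaces of weights $\lambda$ and $\lambda-\beta$. Since $0\ne v^{w\lambda}\in\mathfrak{g}v^\lambda$ has weight $w\lambda\ne\lambda$, this forces $\beta:=\lambda-w\lambda$ to be a positive root and $v^{w\lambda}\in\mathfrak{g}_{-\beta}v^\lambda\setminus\{0\}$; fixing $0\ne X_{-\beta}\in\mathfrak{g}_{-\beta}$ we obtain $X_{-\beta}v^\lambda\ne0$ and $v^{w\lambda}\in\FF^*X_{-\beta}v^\lambda$. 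Because $w\lambda$ is an extreme weight we have $(w\lambda,w\lambda)=(\lambda,\lambda)$, and expanding $(\lambda-\beta,\lambda-\beta)=(\lambda,\lambda)$ yields $2(\lambda,\beta)=(\beta,\beta)$, i.e. $\langle\lambda,\beta^\vee\rangle=1$.

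Now take an $\mathfrak{sl}_2$-triple $(X_\beta,H_\beta,X_{-\beta})$ attached to $\beta$. Then $X_\beta v^\lambda=0$ and $H_\beta v^\lambda=\langle\lambda,\beta^\vee\rangle v^\lambda=v^\lambda$, so the $\mathfrak{sl}_2$-submodule generated by $v^\lambda$ is the two-dimensional irreducible one, namely $\FF v^\lambda+\FF X_{-\beta}v^\lambda$, and in particular $X_{-\beta}^2v^\lambda=0$. Hence $\exp(aX_{-\beta})v^\lambda=v^\lambda+aX_{-\beta}v^\lambda$ for every $a\in\FF$. Since $\exp(aX_{-\beta})\in G$ and $X(\lambda)$ is a $G$-stable cone, every vector $c(v^\lambda+aX_{-\beta}v^\lambda)$ with $c\in\FF^*$, $a\in\FF$ lies in $X(\lambda)$; the remaining non-zero vectors of $\APbr{v^\lambda,X_{-\beta}v^\lambda}$ are the non-zero multiples of $X_{-\beta}v^\lambda$, which lie in $X(\lambda)$ because $X_{-\beta}v^\lambda$ spans the extreme weight space $V(\lambda)_{w\lambda}$ and any extreme weight vector is a highest weight vector for the Borel $wBw^{-1}$. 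As $\APbr{v^\lambda,v^{w\lambda}}=\FF v^\lambda+\FF X_{-\beta}v^\lambda$, this settles the model case, and transporting by $g^{-1}$ finishes the proof.

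I expect the only non-routine point to be the identity $\langle\lambda,\beta^\vee\rangle=1$ — equivalently, the observation that an extreme weight cannot be an interior node of the $\mathfrak{sl}_2$-string through $\lambda$ — since it is exactly this that makes $\exp(aX_{-\beta})$ act on $v^\lambda$ as a linear (rather than higher-degree polynomial) map, and hence sweep out the whole line $\APbr{v^\lambda,v^{w\lambda}}$ inside $X(\lambda)$. Everything else is routine weight-space bookkeeping together with the reduction furnished by Lemma~\ref{Lzak?}.
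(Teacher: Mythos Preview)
Your proof is correct and follows essentially the same route as the paper: reduce via Lemma~\ref{Lzak?}(a) to the pair $(v^\lambda,v^{w\lambda})$, observe that $w\lambda-\lambda$ is a root $\beta$, and then use the associated $SL_2(\beta)$ acting on the two-dimensional span. The paper simply asserts that $\APbr{v^\lambda,v^{w\lambda}}$ is a two-dimensional simple $SL_2(\beta)$-module (hence $SL_2(\beta)$ is transitive on its non-zero vectors), whereas you spell out the justification via $\langle\lambda,\beta^\vee\rangle=1$ and the explicit action of $\exp(aX_{-\beta})$; this is a more detailed version of the same argument, not a different one.
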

\begin{proof}Without loss of generality we assume that
$[v_1]\ne[v_2]$. Then, by Lemma~\ref{Lzak?}a), the pair $(v_1, v_2)$
is conjugate to the pair $(v^\lambda, v^{w\lambda})$ for some
$w\in\mathcal W$ and thus we can assume that
\begin{center}$v_1=v^\lambda$ and $v_2=v^{w\lambda}$\end{center}for
the fixed maximal torus $T\in G$. The space $\frak gv_1$ is clearly
$T$-invariant and the weights of this space form a subset of the set
$\lambda+\Delta$ (this is a point-wise sum). As
$v^{w\lambda}\in\frak gv^\lambda$, we
have\begin{center}$w\lambda=\lambda+\beta$ for some
$\beta\in\Delta$\end{center} (note that $w\lambda\ne\lambda$ as
$[v_1]\ne[v_2]$). Let $SL_2(\beta)$ be the $T$-stable
$SL_2$-subalgebra corresponding to the root $\beta\in\Delta$. Then the
space $\APbr{v^\lambda, v^{w\lambda}}$ is a two-dimensional simple
$SL_2(\beta)$-module and thus any two non-zero elements of
$\APbr{v_1, v_2}$ are $SL_2(\beta)$-conjugate, and hence
$G$-conjugate. Therefore all non-zero elements of $\APbr{v_1, v_2}$
belong to $X(\lambda)$.\end{proof}

\begin{proof}[Proof of Lemma~\ref{Lfew2}] The equivalence of conditions (a), (b), (c) is clear. It is
also immediate to verify that each of these conditions implies HW-density.
It remains to show that, if the module $V(\lambda)$ is HW-dense, then it satisfies
condition (a).

Assume that $V(\lambda)$ is
HW-dense. Then according to Lemma~\ref{Ldhwd} and Lemma~\ref{Lroot}
there exists an open subset $U\subset\frak g$ such that for any
non-zero $a\in\mathbb F$ and any $t\in U$ we have\begin{center}$v^\lambda+a
tv^\lambda\in X(\lambda)$,\end{center}i.e. we have that
$X(\lambda)\cap\APbr{v^\lambda, \frak gv^\lambda}$ is a dense subset
of $\APbr{v^\lambda, \frak gv^\lambda}$. Note that
$v^\lambda\in\frak gv^\lambda$, as $\lambda\ne0$, and hence
$$\APbr{v^\lambda, \frak gv^\lambda}=\APbr{\frak gv^\lambda}.$$We
have\begin{center}$\dim X(\lambda)=\dim Gv^\lambda=\dim\frak
gv^\lambda$\end{center} and therefore $$\overline{X(\lambda)}=\frak
gv^\lambda.$$On the other hand $$V(\lambda)=\APbr{X(\lambda)}$$ and
hence $$V(\lambda)=\overline{X(\lambda)}=\frak
gv^\lambda.\label{Ehwd-1}$$\end{proof}

\section{Restriction to a Levi subgroup}\label{SSrls}

The main result of this section is Proposition~\ref{Predl}. This proposition provides a strong sufficient condition for r-discontinuity of a representation. We will apply this proposition to study rs-continuity/r-discontinuity of fundamental representations in the subsequent sections of this article.

\begin{prop}\label{Predl}
Let $V$ be an irreducible $G$-module and $\ul V$ be a $\ul
G$-module, which is a chopping of $V$. If $\ul V$ is r-discontinuous, then $V$
is r-discontinuous.
\end{prop}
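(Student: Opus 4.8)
The plan is to contrapose: starting from an exceptional vector in $\ul V$, I would produce an exceptional vector in $V$. Recall that $\ul V = \ul V(\ul\lambda)$ is the chopping of $V = V(\lambda)$ associated to a subdiagram $\ul{Dyn}\subset Dyn$, i.e. to a semisimple Levi subgroup $\ul G\subset G$ (in fact I would work with the Levi $L = \ul G\cdot Z$ including the central torus, so that $V$ decomposes over $L$ and one of the summands is the irreducible $\ul G$-module $\ul V$). The first step is to realise $\ul V$ explicitly inside $V$. The restriction $V|_L$ splits into a direct sum of irreducible $L$-modules; the isotypic piece generated by the highest weight vector $v^\lambda$ is exactly $\ul V$, because $\ul\lambda = f_\lambda|_{\ul{Dyn}}$ is the highest weight of $\ul V$ and the parabolic $\ul P\subset \ul G$ stabilising $[v^\lambda]$ is the standard one. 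So I fix a $\ul G$-equivariant linear embedding $\iota\colon \ul V\hookrightarrow V$ with $\iota(v^{\ul\lambda}) = v^\lambda$, and correspondingly $\ul{\XX}(\ul\lambda) = \ul G[v^\lambda]\subset \XX(\lambda)$ as the closed $\ul G$-orbit through the highest weight line.

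The key point is then a compatibility of rank and border rank under this embedding. First, $\iota(\ul X) \subset X$: every highest weight vector of $\ul V$ for $\ul G$ lies on the $\ul G$-orbit of $v^{\ul\lambda}$, which maps into the $G$-orbit of $v^\lambda$, i.e. into $X(\lambda)$. Consequently, for $w\in\ul V$ one has $\rk_{\XX(\lambda)}\iota(w) \le \rk_{\ul\XX(\ul\lambda)}(w)$: any expression of $w$ as a sum of $r$ highest weight vectors of $\ul V$ pushes forward to such an expression of $\iota(w)$ using highest weight vectors of $V$. Second, and this is the crux, I need the reverse inequality for border rank restricted to the subspace $\iota(\ul V)$, namely $\ul{\rk}_{\ul\XX(\ul\lambda)}(w) \le \ul{\rk}_{\XX(\lambda)}\iota(w)$. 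The mechanism is a $\ul G$-equivariant (or better, $L$-equivariant) linear projection $\pi\colon V\to \iota(\ul V)$ splitting $\iota$, available because $\iota(\ul V)$ is an $L$-submodule of the completely reducible $L$-module $V$. If $\iota(w)\in\sigma_r(\XX(\lambda))$, apply $\pi$: since $\sigma_r$ is cut out as a closure of the image of the incidence variety $\{(x_1,\dots,x_r)\}\mapsto \sum x_j$, and $\pi$ is continuous and linear, $\pi(\sigma_r(\XX(\lambda)))\subseteq \overline{\{\sum_{j}\pi(x_j) : x_j\in X(\lambda)\}}$. The remaining task is to show $\pi(X(\lambda))\subseteq \ul X(\ul\lambda)\cup\{0\}$, i.e. that the $L$-equivariant projection of a highest weight vector of $V$ is either zero or a highest weight vector of $\ul V$. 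This holds because $X(\lambda)$ is the union of highest weight orbits over all Borel subgroups, but more directly: $\pi$ intertwines the $\ul G$-actions, $\pi$ restricted to the irreducible $\ul G$-summand $\iota(\ul V)$ is an isomorphism and kills the complementary summands; a highest weight vector $x = g v^\lambda$ with $g\in G$ need not lie in $\iota(\ul V)$, so here I instead use that the closure $\sigma_r$ is $G$-invariant and reduce to analysing $\pi$ on $\overline{X(\lambda)}$ componentwise, concluding $\pi(x)$ lies in the cone over $\ul\XX(\ul\lambda)$ whenever $\pi(x)\ne 0$ by the same $SL_2$-orbit argument as in Lemma~\ref{Lroot}. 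Granting this, $\pi(\iota(w)) = w$ lies in $\overline{\bigsqcup_{s\le r}\ul\XX_s(\ul\lambda)} = \sigma_r(\ul\XX(\ul\lambda))$, so $\ul{\rk}(w)\le r$.

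Combining: if $w\in\ul V$ is exceptional for $\ul\XX(\ul\lambda)$, say $\ul{\rk}(w) = r < \rk(w) = s$, then $\iota(w)$ satisfies $\ul{\rk}_{\XX(\lambda)}\iota(w)\le r$ (push forward a border-rank witness) while $\rk_{\XX(\lambda)}\iota(w)\ge \ul{\rk}_{\ul\XX(\ul\lambda)}(\pi\iota(w))$... — more carefully, $\rk_{\XX(\lambda)}\iota(w) \ge \rk_{\ul\XX(\ul\lambda)}(\pi(\iota(w))) = \rk_{\ul\XX(\ul\lambda)}(w) = s$ using that $\pi$ maps a rank-$t$ expression over $X(\lambda)$ to a rank-$\le t$ expression over $\ul X(\ul\lambda)\cup\{0\}$, hence $\rk_{\XX(\lambda)}\iota(w) = s > r \ge \ul{\rk}_{\XX(\lambda)}\iota(w)$, so $\iota(w)$ is exceptional and $V$ is wild. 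I expect the main obstacle to be the clean verification that $\pi(X(\lambda))\subseteq \ul X(\ul\lambda)\cup\{0\}$; once the equivariant splitting $\pi$ is in place this is the only non-formal ingredient, and it is exactly the sort of highest-weight-line geometry handled by Lemma~\ref{Lzak?} and Lemma~\ref{Lroot}, so I would phrase it as: $\pi$ sends a highest weight line to a highest weight line (of $\ul\XX$) or to $0$, because $\pi$ is $L$-equivariant and the image of any $G$-highest-weight vector generates under $\ul G$ a subspace on which $\pi$ acts as projection to the top $\ul G$-isotypic component.
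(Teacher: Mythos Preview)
Your overall strategy coincides with the paper's: embed $\ul V$ in $V$ via the highest weight vector, take the $\ul G$-equivariant projection $\pi:V\to\ul V$, and reduce everything to the key claim $\pi(X(\lambda))\subset \ul X(\ul\lambda)\cup\{0\}$. Once that claim is in hand, your final paragraph is correct and is exactly what the paper does (the paper packages it as the equality $\rk_{\ul\XX}[\psi]=\rk_{\XX}[\psi]$ for $[\psi]\in\PP(\ul V)$, Proposition~\ref{Prop Levi Reduction}, together with the trivial inclusion $\sigma_r(\ul\XX)\subset\sigma_r(\XX)$).

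The gap is in your justification of $\pi(X(\lambda))\subset \ul X(\ul\lambda)\cup\{0\}$. Lemmas~\ref{Lzak?} and~\ref{Lroot} are not the right tools: they concern pairs of extreme-weight vectors and the $SL_2$-orbit through them, and do not tell you anything about where an arbitrary $gv^\lambda$ ($g\in G$) lands under $\pi$. Your last sentence (``the image of any $G$-highest-weight vector generates under $\ul G$ a subspace on which $\pi$ acts as projection to the top $\ul G$-isotypic component'') is not a proof either: a general $x\in X(\lambda)$ need not be a highest weight vector for any Borel of $\ul G$, so the $\ul G$-module it generates need not be irreducible, and there is no reason a priori that its $\ul V$-component should again be a highest weight line. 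The paper's argument is short and avoids all this: let $P\supset B$ be the parabolic with Levi $L$ and let $N_P^-$ be the unipotent radical of the opposite parabolic. Then $X\cup 0=\overline{N_P^-\,\ul X}$, and for $\xi\in\mk n_P^-$, $v\in\ul X$ one has
\[
\exp(\xi)v \;=\; v+\xi v+\tfrac12\xi^2 v+\cdots,
\]
with every term after the first lying in $\ker\pi$ (since $\mk n_P^-\cdot\ul V\subset\ker\pi$ by weight reasons). Hence $\pi(\exp(\xi)v)=v$, and continuity of $\pi$ gives $\pi(X\cup 0)\subset\ul X\cup 0$. I would replace your appeal to Lemmas~\ref{Lzak?}/\ref{Lroot} by this parabolic argument.
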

This proposition is an immediate corollary of Proposition~\ref{Prop
Levi Reduction}. To state Proposition~\ref{Prop Levi Reduction} we
need more notation.

Recall that we have fixed Cartan and Borel subgroups $H\subset
B\subset G$ and that $\Pi$ denotes the corresponding set of simple
roots. Let $\ul \Pi \subset \Pi$ be a subset. Then $\ul
\Delta=\Delta\cap \APbr{\ul\Pi}$ is a root system having $\ul \Pi$
as a set of simple roots and $\ul\Delta^\pm=\ul\Delta\cap\Delta^\pm$
as sets of positive and negative roots. Further, let $\ul{\mk
g}={\mk h}\oplus(\oplus_{\alpha\in\ul\Delta}){\mk g}^\alpha$. Then
$\ul{\mk g}$ is a reductive subalgebra of ${\mk g}$; we call
subalgebras of this form (reductive) Levi subalgebras. Let $\ul
G\subset G$ be the corresponding Levi subgroup. We shall add
underline to denote the attributes of $\ul G$ with the notational
conventions already introduced for $G$.

Note that $G$ and $\ul G$ have a common Cartan subgroup $H$ and
hence have the same weight lattice $\Lambda$. However, the dominant
Weil chambers do not coincide, unless $\ul\Pi=\Pi$, a case which is
of no use for us. We have an inclusion
$\Lambda^+\subset\ul\Lambda^+$, so a weight $\lambda\in\Lambda^+$
can be regarded as a dominant weight for both $G$ and $\ul{G}$.
Furthermore, since $\ul B=B\cap\ul G$, the $B$-highest weight
vectors are also $\ul B$-highest weight vectors.

Fix $\lambda\in\Lambda^+$. There is a $\ul G$-equivariant inclusion
of the corresponding representations
$$
\ul{V} = \ul V(\lambda) = {\mk U}(\ul{\mk g}) v^\lambda \subset
V(\lambda) = V \;,
$$
where $v^\lambda$ denotes the $B$-highest weight vector in
$V(\lambda)$. Let $\ul\XX$ denote the unique closed $\ul G$-orbit in
$\PP(\ul V)$ and, as before, let $\XX$ denote the unique closed
$G$-orbit in $\PP(V)$. We have $$\ul\XX=\ul G[v^\lambda]\subset
G[v^\lambda] =\XX.$$ For points in $\PP(\ul V)$, we have two well
defined rank functions ${\rm rk}_{\ul \XX}$ and ${\rm rk}_\XX$
(notice that $\ul V$ is a chopping of $V$ according to
Definition~\ref{Def_Chopping}). We would like to compare these
functions and prove the following.
\begin{prop}\label{Prop Levi Reduction}
Let $\ul{G}\subset G$ and $\ul{V}\subset V$ be as above. If
$[\psi]\in\PP(\ul V)$, then ${\rm rk}_{\ul{\XX}}[\psi] = {\rm
rk}_\XX [\psi]$.
\end{prop}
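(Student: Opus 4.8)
The plan is to show that the two rank functions agree on $\PP(\ul V)$ by establishing the two inequalities ${\rm rk}_\XX[\psi]\le{\rm rk}_{\ul\XX}[\psi]$ and ${\rm rk}_{\ul\XX}[\psi]\ge{\rm rk}_\XX[\psi]$, of which only the second is substantive. The first inequality is immediate from the inclusion $\ul\XX\subset\XX$: any expression of $\psi$ as a sum of $r$ vectors from the cone $\ul X$ is also an expression as a sum of $r$ vectors from the cone $X$, so $\ul X$-rank can only be bigger than (or equal to) $X$-rank. The real content is the reverse: if $\psi\in\ul V$ can be written as $\psi=x_1+\dots+x_r$ with $[x_i]\in\XX$ (arbitrary points of the big orbit, not necessarily in $\ul V$), then one can find such an expression with all $[x_i]\in\ul\XX$.

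The key tool I would use is a $\ul G$-equivariant linear projection $p\colon V\to\ul V$ that restricts to the identity on $\ul V$. Such a projection exists because $\ul G$ is reductive, hence $V$ decomposes as a $\ul G$-module into $\ul V$ together with a complementary $\ul G$-submodule; let $p$ be the projection along that complement. Concretely one can take $p$ to be the $\ul G$-isotypic/weight-space projection: $\ul V = {\mk U}(\ul{\mk g})v^\lambda$ is spanned by the weight vectors reachable from $v^\lambda$ using only root operators from $\ul\Delta$, and $p$ kills all other weight spaces. Now apply $p$ to $\psi=x_1+\dots+x_r$: since $\psi\in\ul V$ we get $\psi=p(x_1)+\dots+p(x_r)$, so it suffices to show that for each $x\in X$ (a highest weight vector for some Borel of $G$) the projection $p(x)$ is either $0$ or lies on the cone $\ul X$, i.e. $[p(x)]\in\ul\XX\cup\{0\}$. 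Granting that, we drop the zero terms and obtain an expression of $\psi$ as a sum of at most $r$ vectors from $\ul X$, giving ${\rm rk}_{\ul\XX}[\psi]\le r$ as desired.

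So the heart of the matter — and the step I expect to be the main obstacle — is the claim: for every $x\in X$, $p(x)\in \ul X\cup\{0\}$. Here is how I would argue it. By Lemma~\ref{Lzak?}a) (or rather its proof), every $x\in X$ is $G$-conjugate to $v^\lambda$; but that conjugation need not preserve $\ul V$, so I instead want a statement relative to $\ul G$. The right approach is: $\overline{\ul X}={\mk U}(\ul{\mk g})v^\lambda=\ul V$ is exactly the closure described for the Levi orbit, and $\ul X\cup\{0\}$ is the affine cone over the unique closed $\ul G$-orbit $\ul\XX\subset\PP(\ul V)$. One shows that the closed subvariety $\ul X\cup\{0\}$ is cut out inside $\ul V$ by quadratic equations (the Cartan/Plücker-type relations: $x\otimes x$ lies in the Cartan component $V(2\lambda)\subset V(\lambda)^{\otimes2}$), and that these same quadrics, evaluated on a general $x\in X$, are satisfied after applying $p$ because $p$ is $\ul G$-equivariant and the Cartan component of $\ul V^{\otimes 2}$ is the restriction of the Cartan component of $V^{\otimes2}$. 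A cleaner route, which I would prefer to write up: take $x\in X$, so $x$ is $G$-highest weight for some Borel $B'$; by the conjugacy of pairs of Borels intersecting in a common maximal torus, one may conjugate by $\ul G$ to assume the maximal torus is our fixed $H$, so $x$ is an extreme weight vector $v^{w\lambda}$ for some $w\in\mc W$; then $p(v^{w\lambda})$ is nonzero precisely when the weight $w\lambda$ is a weight of $\ul V$, and when it is nonzero it is a weight vector of extreme weight for $\ul G$ (since $w\lambda$ is extreme for $\ul{\mc W}\subset\mc W$ whenever it occurs in $\ul V$), hence lies on the cone $\ul X$ over the closed $\ul G$-orbit. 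Running this simultaneously over the finitely many summands forces a single $\ul G$-conjugation to work for all of them via a density/openness argument as in Lemma~\ref{Lzak?}. This extreme-weight bookkeeping — checking that the nonzero projections of extreme weight vectors are again extreme weight vectors for the Levi, uniformly over the summands — is the delicate point; everything else is formal.
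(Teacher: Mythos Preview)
Your overall architecture matches the paper's exactly: use the $\ul G$-equivariant projection $p\colon V\to\ul V$, apply it to a minimal expression $\psi=x_1+\cdots+x_r$, and reduce everything to the single claim $p(X)\subset\ul X\cup\{0\}$. The inequality ${\rm rk}_\XX\le{\rm rk}_{\ul\XX}$ is indeed trivial, and once the claim holds the other inequality follows. There is no need for any ``simultaneous'' conjugation over the summands: the claim is about one $x$ at a time, and $p$ is linear.

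The gap is in your proof of the claim. You write that, given $x\in X$, ``one may conjugate by $\ul G$ to assume the maximal torus is our fixed $H$, so $x$ is an extreme weight vector $v^{w\lambda}$.'' This step fails: $\ul G$ does not act transitively on the maximal tori of $G$, so a general $x\in X$ is \emph{not} $\ul G$-conjugate to an $H$-weight vector. For a concrete instance take $G=SL_3$, $\ul G$ the Levi $SL_2\times\FF^\times$ with $\ul\Pi=\{\alpha_1\}$, and $V=\FF^3$. Then $X=\FF^3\setminus\{0\}$, the extreme weight vectors are $e_1,e_2,e_3$, and $\ul G\cdot e_1\cup\ul G\cdot e_2\cup\ul G\cdot e_3=(\FF^2\setminus\{0\})\times\{0\}\ \cup\ \{0\}\times\{0\}\times(\FF\setminus\{0\})$, which misses $e_1+e_3\in X$. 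So neither the reduction to extreme weights nor the suggested density argument covers $X$. (Your subsidiary observation, that if $w\lambda$ is a weight of $\ul V$ then $w\lambda\in\ul{\mc W}\lambda$, is correct, but it is not the obstacle.)

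The paper closes this gap with the opposite unipotent radical. Let $P\supset B$ be the parabolic with Levi $\ul G$ and let $N_P^-$ be the unipotent radical of the opposite parabolic. Then $N_P^-\cdot\ul X$ is dense in $X$, and for $g=\exp(\xi)\in N_P^-$, $\xi\in\mathfrak n_P^-$, and $v\in\ul X$ one has
\[
gv=v+\xi v+\tfrac12\xi^2v+\cdots,
\]
with $\xi(\ul V)\subset\ker p$ (since applying a root vector from $\Delta^-\setminus\ul\Delta^-$ forces a nonzero coefficient on some $\alpha\in\Pi\setminus\ul\Pi$ in $\lambda-\mu$, so the result cannot lie in $\ul V$). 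Hence $p(gv)=v\in\ul X$, and by continuity $p(X)\subset\ul X\cup\{0\}$. This is the missing mechanism; your sketch of the quadratic (Cartan-component) route would also need justification that $(p\otimes p)\bigl(V(2\lambda)\bigr)\subset\ul V(2\lambda)$, which is not immediate.
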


\begin{proof}
First, observe that the multiplicity of the $\ul G$-module $\ul
V(\lambda)$ in $V(\lambda)$ is 1. This holds because $\ul
V(\lambda)$ has a weight vector with weight $\lambda$ and this
weight has multiplicity 1 in $V(\lambda)$. Consequently, there is a
well-defined $\ul G$-equivariant projection
$$
\pi : V \twoheadrightarrow \ul{V} \;.
$$

Let $P\subset G$ be the parabolic subgroup containing $B$ and having
$\ul G$ as a Levi component; the roots of $P$ are
$\Delta^+\sqcup\ul\Delta^-$. Let $N_P$ be the unipotent radical of
$P$; the roots of $N_P$ are $\Delta^+\setminus\ul\Delta^+$. Then
$N_P$ acts trivially on $\ul V$.




\begin{lemma}
We have $\pi(X\cup 0)=\ul X\cup 0$.
\end{lemma}

\begin{proof}Let $N_P^-$ be the nilradical of the parabolic
$P^-$ opposite to $P$, with respect to the given Cartan subgroup
$H$. In other words, $N_P^-$ is the regular unipotent subgroup of
$N^-$ with roots $\Delta(N_P^-)=-\Delta(N_P)$. We have
$$
 X  = \ol{Gv^\lambda} = \ol{P^-v^\lambda} = \ol{N_P^-(\ul Gv^\lambda)} = \ol{N_P^-\ul X} \;.
$$
Thus, to prove the lemma it is sufficient to show that for all $g\in
N_P^-$ and all $v\in\ul X$ we have $\pi(gv)\in\ul X$. Let $g\in
N_P^-$ and $v\in\ul X$. Since the exponential map ${\rm exp}:{\mk
n}_{\mk p}^-\rw N_P^-$ is surjective, we can write $g={\rm
exp}(\xi)$ with $\xi\in{\mk n}_{\mk p}^-$. Viewing $\xi$ as an
element of $\mk{gl}(V)$ we can write
$$
gv=(1+\xi+\frac{1}{2}\xi^2+...)v=v+\xi v+\frac{1}{2}\xi^2v+\cdots
\;.
$$
Let $\ul V'={\rm ker}(\pi)$, so that $V=\ul V \oplus \ul V'$ as $\ul
G$-modules. Then, for $\xi\in {\mk n}_{\mk p}^-$, we have $\xi(\ul
V)\subset \ul V'$. Hence $\pi(gv)=v$.
\end{proof}

Now, let $[\psi]\in\PP(\ul V)$. The inequality $\rk_{\ul \XX}[\psi]\geq{\rk_{\XX}}[\psi]$ is immediate. Let $r={\rm rk}[\psi]$
and $$\psi=v_1+...+v_r$$ be a minimal expression, with
$[v_j]\in\XX$. Then we have $$\psi=\pi(\psi)=\pi(v_1)+...+\pi(v_r)$$
and, according to the above lemma, $[\pi(v_j)]\in\ul\XX\cup 0$ (this
is a set). Hence $\rk_{\ul\XX}[\psi]\leq\rk_{\XX}[\psi]$ and so
\begin{center}$\rk_{\ul\XX}[\psi]=\rk_{\XX}[\psi].$\end{center}
\end{proof}

\section{Fundamental representations (classical groups)}\label{Sfrepcl}

In this subsection we prove Theorem~\ref{Tkltame} for fundamental
modules of classical groups, i.e. we prove Theorem~\ref{Pftamecl}.
The result follows directly from Propositions~\ref{Prop Fund
SL},~\ref{Prop Fund SO},~\ref{Prop Fund Sp} of
Subsections~\ref{SSsl},~\ref{SSso},~\ref{SSsp}, where we consider
the cases of $SL_n$, $SO_n$, $Sp_{2n}$, respectively.
\begin{theorem}\label{Pftamecl}
Let $V(\lambda)$ be a fundamental module of a simple classical group
$G$. Then $V(\lambda)$ is rs-continuous if and only if the pair $(G,
V(\lambda))$ appears in the following table.
\begin{equation}\begin{tabular}{|c|}\hline$\begin{tabular}{c|c|c}Group
$G$&Representation $V$&Highest weight of $V$\end{tabular}$\\
\hline Classical groups\\\hline
$\begin{tabular}{c|c|c}$SL_n$&$\CC^n, (\CC^n)^*, (\Lambda^2\CC^n),
(\Lambda^2\CC^n)^*$&$\pi_1, \pi_{n-1}, \pi_2,
\pi_{n-2}$\\\hline$SO_n$&$\CC^n,
RSpin_n(n\le10)$&$\begin{tabular}{c}$\pi_1, \pi_{\frac n2} (2\mid
n),$\\$\pi_{\frac n2-1} (2\mid n), \pi_{\frac{n-1}2} (2\nmid
n)$\end{tabular}$\\\hline$SP_{2n}$&$\CC^{2n},
\Lambda^2_0\CC^{2n}$&$\pi_1,
\pi_2$\\\end{tabular}$\\\hline\end{tabular}\hspace{10pt},\end{equation}
where the notation is the same as in Theorem~\ref{Tkltame}.

Moreover, all r-discontinuous fundamental representations of classical groups
are 2-discontinuous.\end{theorem}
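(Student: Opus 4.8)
The plan is to split the work by group type, exactly as the statement anticipates, and to reduce the infinitely many fundamental representations of each classical series to a finite list of "seed" cases by means of the chopping machinery (Proposition~\ref{Predl}). First I would recall that a chopping of a fundamental representation is again fundamental (or trivial), so every fundamental $V(\pi_k)$ for $G=SL_n$ contains, as a chopping, a fundamental representation $\Lambda^j\CC^m$ of a smaller $SL_m$, and similarly for $SO_n$ and $Sp_{2n}$. Thus the strategy is: (i) verify \emph{tameness directly} for the finitely many representations listed in the table --- the natural and conatural modules $\CC^n$, $(\CC^n)^*$ (rank $\le 2$ is classical, or these are even HW-dense by Corollary~\ref{Cfew} when the group is $SL_n$ or $Sp_{2n}$), the Grassmannian cases $\Lambda^2\CC^n$ and $(\Lambda^2\CC^n)^*$ (rank of a $2$-form, a classical computation, tame), $\Lambda^2_0\CC^{2n}$ for $Sp_{2n}$, and the small spinor modules $RSpin_n$ for $n\le 10$; and (ii) verify \emph{$2$-wildness directly} for a short list of seeds --- concretely $\Lambda^3\CC^6$ for $SL_6$, $\Lambda^3_0\CC^{2n}$ for $Sp_{2n}$, the higher $\Lambda^k\CC^n$ that are not in the list, the $\Lambda^2\CC^n$ of $SO_n$, and $RSpin_n$ for $n\in\{11,12\}$ --- and then propagate wildness to all remaining fundamental modules by exhibiting one of these seeds as a chopping and invoking Proposition~\ref{Predl}. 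This is precisely the split into Propositions~\ref{Prop Fund SL}, \ref{Prop Fund SO}, \ref{Prop Fund Sp} indicated after the statement.

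For the $SL_n$ case I would argue as follows. The modules $\CC^n$ and $(\CC^n)^*$ have maximal rank $2$ and are tame because the relevant secant variety is all of $\PP(V)$ already after one step (every vector is a sum of at most two highest weight vectors, and there is nothing to approximate). For $\Lambda^2\CC^n$ and its dual, a bivector has rank equal to half its matrix rank, the set of bivectors of rank $\le r$ (as skew matrices) is closed, so $\XX_r$ fills up $\sigma_r$ and tameness follows; symmetrically for $(\Lambda^2\CC^n)^*$. For $\Lambda^k\CC^n$ with $3\le k\le n-3$, I would chop down to $\Lambda^3\CC^6\subset\Lambda^k\CC^n$ (delete all but the six nodes carrying the relevant part of the diagram): since $\Lambda^3\CC^6$ is $2$-wild --- this I would check by hand, producing an explicit tangent vector $v^\lambda+tv^\lambda$ at a highest weight point whose rank is $\ge 3$ while its border rank is $\le 2$, in the spirit of Lemma~\ref{L2w} and Lemma~\ref{Lv3} --- Proposition~\ref{Predl} gives wildness of $\Lambda^k\CC^n$, and the witness lives in $\sigma_2$, so it is $2$-wild.

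For $SO_n$ I would handle $\CC^n$ (the smooth quadric, maximal rank $2$, tame by inspection: every vector of rank $2$ is a genuine sum and the quadric is already closed) and then dispatch $\Lambda^2\CC^n=\frak{so}_n$ directly as $2$-wild --- the adjoint variety of type $B$/$D$ is wild by \cite{Kaji-Yasukura-2000}, but I would give a short self-contained tangent-vector argument --- and propagate to the remaining non-spinor fundamentals $\Lambda^k\CC^n$, $k\ge 2$, by chopping to $\Lambda^2\CC^m$ of a smaller orthogonal group (or to $\Lambda^3\CC^6$ of an $SL_6$ Levi). The spinor representations require separate care: I would check tameness of $RSpin_n$ for $n\le 10$ by hand (these are the small exceptional-isogeny cases: $Spin_6=SL_4$ on $\Lambda^2\CC^4$-type module wait --- rather $Spin_7,Spin_8,Spin_9,Spin_{10}$ on $8,8,16,16$-dimensional modules, each with maximal rank $2$), and check $2$-wildness of $RSpin_{11}$ and $RSpin_{12}$ directly; then every larger spinor module has $RSpin_{12}$ (or $RSpin_{11}$) as a chopping, so Proposition~\ref{Predl} finishes the spinor series. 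The $Sp_{2n}$ case is parallel: $\CC^{2n}$ is HW-dense (rank $1$, tame), $\Lambda^2_0\CC^{2n}$ is the isotropic Grassmannian $Gr_\omega(2,\CC^{2n})$ and tame by a rank-of-skew-form computation compatible with the symplectic form, while $\Lambda^k_0\CC^{2n}$ for $k\ge 3$ is wild, reduced to the seed $\Lambda^3_0\CC^{6}$ (checked $2$-wild by hand) via chopping. The main obstacle I anticipate is the by-hand verification of $2$-wildness of the seed modules $\Lambda^3\CC^6$, $\Lambda^3_0\CC^{2n}$, $RSpin_{11}$, $RSpin_{12}$: one must exhibit a concrete vector on $\sigma_2(\XX)\setminus(\XX_1\sqcup\XX_2)$, which means both computing a tangent vector at a highest weight point (easy, via Lemma~\ref{L2w}) \emph{and} proving its rank is at least $3$ (the genuine work --- an $\Ima$-type obstruction as in Lemma~\ref{Lv3}, or a dimension/normal-form argument for spinors). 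Everything else is either classical matrix algebra or a mechanical application of the chopping reduction.
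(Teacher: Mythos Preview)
Your plan matches the paper's proof almost exactly: split by type into Propositions~\ref{Prop Fund SL}, \ref{Prop Fund SO}, \ref{Prop Fund Sp}, verify tameness of the listed modules directly, and reduce every remaining fundamental module to a wild seed ($\Lambda^3\CC^6$ for $SL$, the adjoint $\Lambda^2\CC^n$ for $SO$, $\Lambda^3_0\CC^{2n}$ for $Sp$, and $RSpin_{12}$ for the spinors) via Proposition~\ref{Predl}. Two small divergences worth noting: the paper reduces odd spinors to even ones first (so only $Spin_{12}$ needs checking, done by citing \cite{Charlton-Thesis-1997}) rather than treating $RSpin_{11}$ separately, and for $SL_n$ acting on $\CC^n$ the maximal rank is $1$, not $2$, since $\XX=\PP(\CC^n)$.
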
 Our approach for classical groups is
tensor-based and we often use symmetric/antisymmetric bilinear
forms. To prove Theorem~\ref{Pftamecl} we also need some sufficient
condition of r-discontinuity for representations. Such a condition is provided in Proposition~\ref{Predl} of Section~\ref{SSrls}. In a
similar way Proposition~\ref{Predl} will be very useful in
Section~\ref{Sfrepex}, where we consider the fundamental
representations of the exceptional groups.

\subsection{$G=SL_n$}\label{SSsl}

Recall that the fundamental representations of $G=SL_n$ are obtained
as exterior tensor powers of the natural representation, i.e.
$V(\pi_k)=\Lambda^k\FF^n$, $k=1,...,n-1$. Furthermore, we have
$$(\Lambda^k\FF^n)^*=\Lambda^{n-k}\FF^n$$ as $SL_n$-modules.
\begin{prop}\label{Prop Fund SL}
The fundamental representations of $SL_n$ which are rs-continuous are exactly
$$
\CC^n, (\CC^n)^*, \Lambda^2\CC^n,
(\Lambda^2\CC^n)^*.
$$
Moreover, all r-discontinuous fundamental representations of $SL_n$ are 2-discontinuous.
\end{prop}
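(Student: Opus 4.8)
The plan is to handle the tame cases and the wild cases separately, exploiting the identification $V(\pi_k)=\Lambda^k\FF^n$ and the duality $(\Lambda^k\FF^n)^*\cong\Lambda^{n-k}\FF^n$, which immediately reduces everything to the range $2k\le n$ (and separately lets us restrict attention to, say, $k\le n/2$ when proving wildness). For the tame claim: the case $\Lambda^1\FF^n=\FF^n$ is trivial, since $\XX=\PP(V)$ and every vector has rank $1$. The case $\Lambda^2\FF^n$ is the classical statement that antisymmetric matrices of rank (in the Pl\"ucker/decomposable sense) at most $r$, i.e. those expressible as a sum of $r$ wedges $u\wedge v$, form the closed subvariety cut out by the $(2r+2)\times(2r+2)$ Pfaffians; hence $\sigma_r(\mathrm{Gr}_2(\FF^n))=\{[\omega]:\mathrm{rk}\,\omega\le 2r\}$ is closed and equals $\bigsqcup_{s\le r}\XX_s$, so the embedding is tame. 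Alternatively, one can note that $\Lambda^2_0$ and $\Lambda^2$ are subminuscule/spherical and invoke the remarks in the introduction, but giving the direct Pfaffian argument keeps the section self-contained; the dual cases $(\FF^n)^*$ and $(\Lambda^2\FF^n)^*$ follow by the duality isomorphism, which preserves $\XX$ and hence the rank stratification.

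For the wild claim the key reduction is Proposition~\ref{Predl} (chopping): it suffices to exhibit, for each $k$ with $3\le k\le n-3$ (equivalently $3\le k\le n/2$ after dualizing), a chopping of $\Lambda^k\FF^n$ that is already known to be $2$-wild, and then the ambient representation is wild. The natural candidate chopping of $V(\pi_k)$ for the group $SL_n$, obtained by chopping the Dynkin diagram $A_{n-1}$ down to a subdiagram of type $A_5$ containing the $k$-th node in its interior, is the $SL_6$-module $\Lambda^3\FF^6=V(\pi_3)$. So the crux is to prove directly that $\Lambda^3\FF^6$ is $2$-wild, i.e. to produce an $\omega\in\Lambda^3\FF^6$ with $\ul{\mathrm{rk}}\,\omega=2$ but $\mathrm{rk}\,\omega\ge 3$. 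The plan here is to use Lemma~\ref{L2w}: take $x=e_1\wedge e_2\wedge e_3\in X(\pi_3)$ and a generic $t\in\frak{sl}_6$, so that $T:=x+tx\in\sigma_2(\XX(\pi_3))$; then $tx$ is a generic tangent vector, of the shape $e_1'\wedge e_2\wedge e_3+e_1\wedge e_2'\wedge e_3+e_1\wedge e_2\wedge e_3'$, and one checks (e.g.\ by a direct invariant-theoretic argument on $3$-vectors in six variables, or by the observation that a generic such $T$ has $3$-dimensional "image" in each slot so cannot be written as a sum of two decomposables — the $\Lambda^3\FF^6$-analogue of Lemma~\ref{Lv3}) that $\mathrm{rk}\,T\ge 3$, while $\ul{\mathrm{rk}}\,T\le 2$. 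For the cases $n=6, k=3$ not covered by a smaller chopping, this is precisely the statement just proved; for larger $n$ the chopping $SL_6\subset SL_n$ with $\ul V=\Lambda^3\FF^6$ and Proposition~\ref{Predl} finish it. Finally, "all wild fundamental representations of $SL_n$ are $2$-wild" is automatic from this argument: the exceptional vector we constructed has border rank $2$, and Proposition~\ref{Predl} preserves ranks on $\PP(\ul V)$, so the exceptional vector of $\Lambda^3\FF^6$ sits inside $\Lambda^k\FF^n$ as an exceptional vector of border rank $2$.

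The main obstacle is the base case: showing $\mathrm{rk}\,T\ge 3$ for $T=x+tx$ in $\Lambda^3\FF^6$. Unlike the tensor case (Lemma~\ref{Lv3}), here one is working inside an exterior power, so the clean "multilinear image in each factor" trick must be replaced; I expect the cleanest route is either (i) to restrict to a suitable $6$-dimensional ambient space and use the classical fact that a generic element of $\Lambda^3\FF^6$ has border rank $2$ but the tangential variety to $\mathrm{Gr}_3(\FF^6)$ is not contained in $\sigma_2$-minus-higher-rank (this is a known low-dimensional computation, and indeed $\mathrm{Gr}_3(\FF^6)$ appears in the paper's own list of fundamental modules to be checked "in a straightforward way"), or (ii) to pick $t$ very explicitly, write down $T$ in coordinates, and rule out rank $2$ by a finite elimination. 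Either way the argument is a finite, low-dimensional check; the rest of the proposition is then pure bookkeeping via duality and chopping.
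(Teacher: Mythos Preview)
Your overall strategy --- duality to reduce to $k\le n/2$, the trivial/Pfaffian arguments for $k=1,2$, and chopping via Proposition~\ref{Predl} to reduce every wild case to $\Lambda^3\FF^6$ --- is exactly the paper's route. The only substantive difference lies in how the base case $\Lambda^3\FF^6$ is handled.

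You propose to use Lemma~\ref{L2w} to place $T=x+tx$ in $\sigma_2$ and then verify $\mathrm{rk}\,T\ge 3$ by an explicit computation, which you rightly flag as the main obstacle. The paper avoids this computation entirely. By~\cite[Ch.~III, Thm~1.4]{Zak-Book} one has $\sigma_2(\XX)=\PP(\Lambda^3\FF^6)$, so \emph{every} nonzero vector already has border rank at most $2$; thus it suffices to show that \emph{some} vector has rank $\ge 3$. The paper does this by an orbit count: there is one $SL_6$-orbit of rank $0$, one of rank $1$, and exactly two of rank $2$ --- writing $\psi=v_1\wedge v_2\wedge v_3+v_4\wedge v_5\wedge v_6$, the two rank-$2$ orbits are distinguished by whether $\langle v_1,v_2,v_3\rangle\cap\langle v_4,v_5,v_6\rangle$ is zero or a line (a $2$-dimensional intersection forces rank $1$). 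Since the total number of $SL_6$-orbits in $\Lambda^3\FF^6$ is classically known to be five~\cite{Sem48,R07}, the fifth orbit must consist of elements of rank $\ge 3$, and $2$-wildness follows. This buys you the base case with no coordinate elimination at all.

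One small remark on your mid-paragraph suggestion that ``a generic such $T$ has $3$-dimensional image in each slot so cannot be written as a sum of two decomposables'': this does not work as stated. A generic rank-$2$ element $v_1\wedge v_2\wedge v_3+v_4\wedge v_5\wedge v_6$ with $v_1,\dots,v_6$ independent already has full $6$-dimensional support (and full contraction image), so support or image dimension alone cannot separate rank $2$ from rank $3$ in $\Lambda^3\FF^6$. You correctly back away from this in your final paragraph, but it is worth being explicit that the exterior-power situation is genuinely different from the tensor-product situation of Lemma~\ref{Lv3}, and that your route~(ii) --- an explicit coordinate check --- is the one that would actually go through if you insisted on the tangent-vector approach.
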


\begin{proof}
The closed $G$-orbit $\XX\subset\PP(\Lambda^k\FF^n)$ is the
Grassmann variety Gr$_k(\FF^n)$ under its Pl\"ucker embedding. It is
well known that a suitable isomorphism between $\Lambda^k\FF^n$ and
$\Lambda^{n-k}\FF^n$ induces an isomorphism between the respective
projective spaces, which carries Gr$_k(\FF^n)$ to Gr$_{n-k}(\FF^n)$.
Hence, for our purposes, it is sufficient to consider $k\leq n/2$.

The fact that $V(\pi_1)=\FF^n$ and $V(\pi_2)=\Lambda^2\FF^n$ are
rs-continuous is well known. In fact, in the first case we have
$\XX=\PP(\FF^n)$, so all vectors have rank 1. In the second case,
$\Lambda^2\FF^n$ can be identified with the space of skew-symmetric
$n\times n$ matrices. Such a matrix has even rank (in the usual
sense) and the $SL_n$-orbit $X$ through a highest weight vector in
$\Lambda^2\FF^n$ consists of all matrices of rank 2. A
skew-symmetric matrix $\psi$ of rank $2r$ can be written as a sum of
$r$ skew-symmetric matrices of rank 2, and so ${\rm
rk}_\XX[\psi]=r$. We can now see that the set
$$\{[\psi]\in\PP(\Lambda^2\FF^n):{\rm rk}_\XX\psi\leq r\}$$ is closed
for every $r$. This completes the argument in this case.

We now turn to the remaining cases. Due to the
duality $\Lambda^k\mathbb F^n\leftrightarrow(\Lambda^{n-k}\mathbb F^n)^*$, it
suffices to consider $n\geq 6$. Proposition \ref{Prop Levi
Reduction} implies that, to show that $\Lambda^k\FF^n$ ($3\leq k\leq
n/2$) is 2-discontinuous, it is sufficient to show that $\Lambda^3\FF^6$ is
2-discontinuous.

\begin{lemma}\label{Lemma A^3(SL_6)}
The representation of $SL_6$ on $\Lambda^3\CC^6$ is 2-discontinuous.
\end{lemma}

\begin{proof}It is shown in~\cite[Ch. III, Thm 1.4]{Zak-Book}, that
$\sigma_2(X(\Lambda^3\CC^6))=\Lambda^3\CC^6$ and therefore it is
enough to show that $\Lambda^3\CC^6$ contains a vector of rank 3 or
more. To do this we count the number of orbits of vectors of rank 0,
1, 2 and compare this number with the known number of orbits for the
action of $SL_6$ on $\Lambda^3\CC^6$, see~\cite{Sem48}
or~\cite{R07}.

By definition there is one orbit of vectors of rank 0 and
one orbit of vectors of rank 1. Let us consider the
vectors of rank 2 in $V$. We shall show that there are two orbits of such vectors.
Any vector of rank 2 can be written as
$$
\psi = v_1\wedge v_2\wedge v_3 + v_4\wedge v_5\wedge v_6 \;,
$$
with some $v_j\in{{V}}$. The first possibility is that $v_1,...,v_6$
form a basis of $\CC^6$. This is indeed the generic situation. If
suitable Borel and Cartan subgroups of $SL_6$ are chosen, the two
summands of $\psi$ are, respectively, the highest and lowest weight
vectors in ${{V}}$. The group $GL_6$ acts transitively on the set of
all bases of $\CC^6$; the group $SL_6$ acts transitively on the set
of their projective images. Thus the points of the first type form a
single $G$-orbit $\XX_2'$ which is open in $\PP(\Lambda^3\mathbb
F^6)$. We denote by $Z$ the complement to this orbit in
$\PP(\Lambda^3\mathbb F^6)$. The second possibility is to have
$$
\dim (\APbr{v_1,v_2,v_3}\cap \APbr{v_4,v_5,v_6}) = 1 \;.
$$
If this is the case, by changing the vectors if necessary, we may reduce to the situation where $v_1=v_4$ and
$$
\psi= v_1\wedge (v_2\wedge v_3 + v_5\wedge v_6) \;,\quad {\rm with}
\quad \APbr{v_2,v_3}\cap \APbr{v_5,v_6} = 0 \;.
$$
Since $v_2\wedge v_3 + v_5\wedge v_6$ has rank 2 in $\Lambda^2\CC^6$
(with respect to Gr$_2(\CC^6)$), we deduce that $\psi$ has indeed
rank 2 in ${{V}}$. The point $\phi$ does not belong to $\XX'_2$,
because the action of $GL_6$ respects linear dependencies. On the
other hand, it is also clear that $GL_6$ acts transitively on the
set $\XX_2''$ of points of this second type, and hence $SL_6$ acts
transitively on the set of their images in $\PP$. Note that
\begin{center}
if $\dim (\APbr{v_1,v_2,v_3}\cap \APbr{v_4,v_5,v_6}) > 1$, then ${\rm rk}[\psi]=1$.
\end{center}
We can conclude that there are exactly two $G$-orbits consisting of points of rank 2, namely
$$
\quad \XX_2'=\PP\setminus Z,\quad \XX_2''=Z\cap\XX_2 \;.
$$Thus there are four $SL_6$-orbits of vectors of rank 0, 1, 2. It is
known that $SL_6$ has five orbits in $\Lambda^3\CC^6$. Therefore
$\Lambda^3\CC^6$ has a unique $SL_6$-orbit of vectors of rank 3 or
more and $\Lambda^3\CC^6$ is 2-discontinuous. An example of a vector of rank
3 is (see \cite{Sem48}) $$\Lambda3 = v_1\wedge v_2\wedge v_4 +
v_1\wedge v_5\wedge v_3 + v_6\wedge v_2\wedge v_3 \;.$$
\end{proof}
\end{proof}

\subsection{$G=SO_n$}\label{SSso}
Let $\ell=rank(G)=\lfloor\frac{n}{2}\rfloor$. In this section we
prove the following proposition.
\begin{prop}\label{Prop Fund SO}
The natural representation $V(\pi_1)$ is rs-continuous.

1) If $n$ is even, then, for $j=2,...,\ell-2$, the representation
$V(\pi_j)$ is r-discontinuous.

2) If $n$ is odd, then, for $j=2,...,\ell-1$, the representation
$V(\pi_j)$ is r-discontinuous.

3) The spin representations ($V(\pi_{\ell-1})$ and $V(\pi_{\ell})$
for even $n$ and $V(\pi_\ell)$ for odd $n$) are rs-continuous if and only if
$n\leq 10$.

Moreover, all fundamental representations of $SO_n$ which are r-discontinuous
are 2-discontinuous.
\end{prop}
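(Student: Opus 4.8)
The plan is to split the argument into three parts matching the three statements. The natural representation $V(\pi_1)=\FF^n$ is tame because $\XX(\pi_1)\subset\PP(\FF^n)$ is a quadric hypersurface $\mathrm{Q}^{n-2}$; every nonzero vector is a sum of at most two isotropic vectors, and the set of isotropic vectors (rank $1$) is the quadric itself — closed — while everything off the quadric has rank exactly $2$, so $\sigma_2(\XX)=\PP(\FF^n)=\XX_1\sqcup\XX_2$ and there are no exceptional vectors. For the wild cases (parts 1 and 2), the strategy is a reduction via choppings: by Proposition~\ref{Predl}, it suffices to exhibit a single small wild representation in the ``middle'' of the $B_n/D_n$ Dynkin diagram and show that $V(\pi_j)$ for the relevant range of $j$ contains it as a chopping. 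Concretely, chopping the $SO_n$ diagram at an interior node $\alpha_j$ (for $2\le j\le\ell-2$ in type $D$, resp. $2\le j\le\ell-1$ in type $B$) produces a Levi subgroup with an $SL$-factor acting on $\Lambda^j$ of its natural module; since $\Lambda^3\FF^6$ is $2$-wild by Lemma~\ref{Lemma A^3(SL_6)} and $2$-wildness propagates up through choppings, one only needs that each such $V(\pi_j)$ has $\Lambda^3\FF^6$ (or, in the boundary-of-range cases, $\Lambda^2_0\FF^{2m}$ or $\Lambda^3_0\FF^{2m}$ for $Sp$, handled analogously in Section~\ref{Sfrepcl}) among its choppings. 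This is a purely combinatorial check on the marked Dynkin diagram: removing appropriate end nodes of $B_n$ or $D_n$ leaves an $A$-type (or $C$-type, at the spinor end) subdiagram carrying the required fundamental weight.

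For part 3, the spin representations, one argues by cases on $n$. For $n\le 10$ the varieties $\mathrm{S}^{10}\subset\PP(\FF^{16})$ (the $D_5$ spinor variety) and its lower-rank analogues $\mathrm{S}^{8}$, $\mathrm{S}^{7}$, etc., together with the $D_4$ triality-image of the vector representation and the small $B_3$ spinor module $\FF^8$, are all subcominuscule: they are isotropy representations of Hermitian symmetric spaces, so by the result of \cite{Bucz-Lands-2011} (recalled in the introduction) their rank strata coincide with the $G$-orbits and they are tame. (Equivalently, one may invoke Lemma~\ref{Lfew2} and the small-rank coincidences directly to see $\sigma_r(\XX)=\bigsqcup_{s\le r}\XX_s$.) For $n\ge 11$, i.e. $RSpin_n$ with $n\in\{11,12\}$ as the base case, one checks $2$-wildness directly — this is exactly one of the ``straightforward'' base computations promised in the plan of the introduction — and then, for $n\ge 13$, chopping the spinor node's neighborhood of a $D_n$ or $B_n$ diagram yields $RSpin_{12}$ (or $RSpin_{11}$) as a chopping, so Proposition~\ref{Predl} again upgrades wildness of the base case to all larger $n$. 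The ``moreover'' clause is automatic from this scheme: every wildness assertion is established by producing an exceptional vector of border rank $2$, either directly in the base case or pulled back through the chopping reduction, which preserves the $2$-wild property.

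I expect the main obstacle to be the base-case computation showing $RSpin_{12}$ (and $RSpin_{11}$) is $2$-wild. Unlike the vector or small spinor cases, here $\sigma_2(\XX)$ is a proper subvariety of $\PP(V)$, so one cannot simply count orbits against a known list and conclude by dimension; instead one must exhibit a specific tangent vector $x+tx$ with $x\in X(\lambda)$, $t\in\mk g$, which by Lemma~\ref{L2w} lies in $\sigma_2(\XX)$, and then prove it has honest rank $\ge 3$ — typically by an invariant-theoretic obstruction (e.g. a $\Spin_{12}$-equivariant quartic or the structure of the ``pure spinor'' quadratic constraints cutting out $\XX$) that vanishes on sums of two pure spinors but not on the chosen tangent vector. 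Verifying that the candidate chopping reductions actually deliver $RSpin_{12}$ or $RSpin_{11}$ (rather than a smaller, possibly tame, spinor module) is a secondary but genuine bookkeeping point, since choppings at the forked end of $D_n$ behave differently depending on the parity of $n$ and on which of the two spinor nodes is retained.
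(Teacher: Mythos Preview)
Your plan for the natural representation and for the spin representations is essentially the paper's approach, though the paper streamlines the spin case in two ways: it cites \cite{Charlton-Thesis-1997} for the $2$-wildness of $RSpin_{12}$ rather than carrying out a direct tangent-vector computation, and it handles odd $n$ by observing that $Spin_{2\ell-1}$ acts transitively on the closed $Spin_{2\ell}$-orbit in $\PP(RSpin_{2\ell})$, so the odd spinor case reduces to the even one with no separate base computation for $RSpin_{11}$.

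Your approach to parts 1) and 2), however, has a genuine gap at $j=2$. The representation $V(\pi_2)$ of $SO_n$ is the adjoint module $\mk{so}_n$, and \emph{every} chopping of it is tame: removing node $1$ gives the vector representation of a smaller $SO$; removing the spinor node(s) gives $\Lambda^2\FF^m$ for some $SL_m$; removing any interior node $k\geq 3$ disconnects the diagram and leaves $\pi_2$ of $A_{k-1}$, again $\Lambda^2$. None of these has $\Lambda^3\FF^6$ (or any wild $Sp$-module) as a further chopping, so Proposition~\ref{Predl} is powerless here and you need an independent argument for this base case. The paper supplies one by invoking the orbit decomposition of $\sigma_2(\XX)$ for adjoint varieties from~\cite{Kaji-Yasukura-2000}, which shows $\sigma_2(\XX)\ne\XX\sqcup\XX_2$ for $\mk{so}_n$. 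Once $j=2$ is in hand, the paper's reduction for $j\geq 3$ is to chop off nodes $1,\dots,j-2$, landing on $\pi_2$ of a smaller $SO$ rather than on an $A$-type module; this sidesteps the combinatorial edge-cases near the top of the range that your $\Lambda^3\FF^6$ strategy runs into (e.g.\ $\pi_3$ of $D_5$ has no wild $A$-type chopping either).
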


\begin{proof}
The first statement is well known. Indeed, the group $SO_n$ has
exactly two orbits in $\PP(\pi_1)$, namely, the quadric and its
complement. The first one consists, by definition, of points of
rank 1. The second one consists necessarily of points of rank 2.

The second and third statement in the proposition are concerned with
fundamental representations of $SO_n$, which are not the natural nor
the spin representation. We handle the two statements at once. In
the case $j=2$, the representation is actually the adjoint
representation, i.e. $V(\pi_2)=\frak{so}_n$. Here results of
\cite{Kaji-Yasukura-2000} show that $\sigma_2(\XX)\ne
\XX\sqcup\XX_2$. Thus the representation is 2-discontinuous. The remaining
cases, $j\geq 3$, are reduced to the case $j=2$ via Proposition
\ref{Prop Levi Reduction}.

Now, we turn to the last statement of the proposition, concerning
the spin representations. First, recall that, for even $n$, the
geometric properties we are concerned with are the same for the two
spin representations $V(\pi_{\ell-1})$ and $V(\pi_\ell)$. Also,
either one of these representations remains irreducible when
restricted to $Spin_{n-1}$ and, furthermore, $Spin_{n-1}$ acts
transitively on the closed orbit of $Spin_n$ in $\PP(\pi_\ell)$.
Thus, it is enough to check statement 3) for the representations
$V(\pi_\ell)$ of $Spin_{2\ell}$. Let $\XX$ denote the closed orbit
of $Spin_{2\ell}$ in $\PP(\pi_\ell)$.

It is shown, in \cite[Section 3.5]{Charlton-Thesis-1997}, that for
$2\ell=12$ the secant variety $\sigma_2(\XX)$ contains elements of
rank 3. Thus the representation $V(\pi_6)$ of $Spin_{12}$ is 2-discontinuous.
Using Proposition~\ref{Predl}, we deduce that the representation
$V(\pi_\ell)$ of $Spin_{2\ell}$ is r-discontinuous for all $\ell\geq 6$. So,
according to the remarks made earlier in this proof, the spin
representations of $Spin_n$ are 2-discontinuous for $n\geq11$.


It remains to verify that the spin representations are rs-continuous for even
$n\leq 10$. This statement easily follows from Corollary~\ref{Cz}. This completes the proof
of the proposition.

\end{proof}

\subsection{$G=Sp_{2n}$}\label{SSsp}
\begin{prop}\label{Prop Fund Sp}
The fundamental representations of $Sp_{2n}$ which are rs-continuous are exactly $V(\pi_1)$ and $V(\pi_2)$. All other fundamental representations of $Sp_{2n}$ are 2-discontinuous.
\end{prop}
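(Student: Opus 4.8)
The plan is to handle the two tame cases $V(\pi_1)$ and $V(\pi_2)$ directly and then reduce all the remaining fundamental representations $V(\pi_k)$, $3\le k\le n$, to a single small base case via chopping (Proposition~\ref{Predl}).

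For $V(\pi_1)=\FF^{2n}$ we have $\XX=\PP(\FF^{2n})$, so every vector has rank $1$ and the representation is trivially tame. For $V(\pi_2)=\Lambda^2_0\FF^{2n}$, the standard model is the space of skew-symmetric bilinear forms annihilated by the symplectic form $\omega$ (equivalently, of $\omega$-trace zero), and $\XX=\mathrm{Gr}_\omega(2,\FF^{2n})$ is the variety of $\omega$-isotropic $2$-planes. As in the $SL_n$ case treated in Proposition~\ref{Prop Fund SL}, one argues that a generic element of $\Lambda^2_0\FF^{2n}$, viewed as a skew matrix $\psi$, can be brought by $Sp_{2n}$ into a normal form that is a sum of $r$ rank-$2$ pieces supported on mutually $\omega$-orthogonal isotropic planes, where $2r$ is the matrix rank (so $r\le n$); one then checks $\mathrm{rk}_\XX[\psi]=r$ and, crucially, that the locus $\{[\psi]:\mathrm{rk}_\XX[\psi]\le r\}$ is Zariski closed, being cut out by the vanishing of appropriate Pfaffian-type minors. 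Hence $\sigma_r(\XX)=\bigsqcup_{s\le r}\XX_s$ for all $r$, so $V(\pi_2)$ is tame. (This is the case $(Sp(V),V(\pi_2))$ already allowed in Proposition~\ref{Llist} and Theorem~\ref{Tkltame}.)

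For the wildness of $V(\pi_k)$ with $3\le k\le n$, the idea is to exhibit one base case and propagate. By Proposition~\ref{Predl}, it suffices to find, for each such $k$, a chopping of $V(\pi_k)$ that is already known to be wild. Chopping a node of the $C_n$ diagram produces either an $A$-type or a smaller $C$-type fundamental (or adjoint) representation; in particular, $\Lambda^k\FF^n$ of $SL_n$ and $\Lambda^3_0\FF^{2n}$ of $Sp_{2n}$ appear as choppings of $V(\pi_k)$. Since $\Lambda^3\FF^6$ of $SL_6$ is $2$-wild by Lemma~\ref{Lemma A^3(SL_6)}, and — per the list of base cases announced in Section~\ref{Sfrepcl} — $\Lambda^3_0\FF^{2n}$ of $Sp_{2n}$ is to be checked $2$-wild directly, Proposition~\ref{Predl} then forces $V(\pi_k)$ to be wild for every $k\ge 3$. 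One must be slightly careful to ensure that for each $k$ in the range $3\le k\le n$ at least one wild chopping is available: chopping the first few nodes of $C_n$ to isolate an $A_{k-1}$-subdiagram yields a $\Lambda^k$ of a special linear group, and for $k\le$ half the rank this is wild by the $SL$ analysis (Proposition~\ref{Prop Fund SL}), while for $k$ close to $n$ one instead retains a short tail of $C$-type nodes and lands on a wild $\Lambda^3_0$ or adjoint case.

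The main obstacle is the case $k=3$, i.e. establishing that $\Lambda^3_0\FF^{2n}$ of $Sp_{2n}$ — and in particular its smallest instance, which plays the role of the genuinely new base case not covered by the $SL_n$ computation — is $2$-wild; this cannot be reduced further by chopping and must be done by an explicit rank-vs-border-rank argument, constructing an exceptional vector of border rank $2$ analogously to the $\Lambda^3\FF^6$ computation of Lemma~\ref{Lemma A^3(SL_6)} (using Lemma~\ref{L2w} to produce a border-rank-$2$ candidate of the form $x+tx$, then ruling out actual rank $2$ via an image/incidence argument respecting the symplectic form). All remaining work is bookkeeping: verifying that the chopping graph of $C_n$ indeed reaches a known wild representation for each $k\ge 3$, which is routine given Proposition~\ref{Predl}.
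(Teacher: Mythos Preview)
Your overall plan matches the paper's: handle $V(\pi_1)$ and $V(\pi_2)$ directly, then show $V(\pi_k)$ is 2-wild for $k\ge3$ by chopping down to a base case via Proposition~\ref{Predl}. The $V(\pi_2)$ sketch is essentially what the paper does in Lemmas~\ref{Psppi2}--\ref{Lpi23}.

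Two differences are worth noting. First, your chopping scheme is more complicated than necessary, and the description is slightly off: removing nodes from the short-root end of $C_n$ gives a smaller $C$-diagram, not an $A$-diagram; to get an $A$-type subdiagram you must remove the long-root node $\alpha_n$. The paper avoids any case split: for every $k\ge3$ it chops the nodes $\alpha_1,\dots,\alpha_{k-3}$, leaving a $C_{n-k+3}$ subdiagram on which $\pi_k$ restricts to $\pi_3$, so the only base case needed is $V(\pi_3)$ of $Sp_{2m}$ with $m\ge3$ (Lemma~\ref{Lemma A^k Sp_2n}).

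Second, for this base case the paper does not use the $x+tx$ mechanism of Lemma~\ref{L2w}. Instead (Lemma~\ref{Lemma A^3 Sp_2n}) it recycles the explicit element $\Lambda3$ from the $SL_6$ computation: one chooses the symplectic form on $\FF^6$ so that each summand of $\Lambda3$ is supported on an isotropic $3$-plane, whence $\Lambda3\in\Lambda^3_0\FF^{2n}$; since $X(\pi_3)\subset X(\Lambda^3\FF^{2n})$, the $Sp$-rank of $\Lambda3$ is bounded below by its $SL$-rank, which is $3$; and Zak's theorem gives $\sigma_2(\XX(\pi_3))=\PP(\Lambda^3_0\FF^6)$ for $Sp_6$, so $\ul{\rk}\,\Lambda3\le2$. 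Your $x+tx$ approach could presumably also be made to work, but the paper's route is shorter because it reuses the $SL_6$ analysis wholesale and invokes a known result for the border rank.
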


\begin{proof}
The representation $V(\pi_1)$ is simply the natural representation
of $Sp_{2n}$ on $\FF^{2n}$. The action of $Sp_{2n}$ on
$\PP(\FF^{2n})$ is transitive, i.e. $\XX=\PP(\FF^{2n})$ and there is
nothing more to prove here. The representations $V(\pi_2)$ and
$V(\pi_k), k\geq3$ are considered in Lemmas \ref{Psppi2} and \ref{Lemma A^k Sp_2n}, respectively.
\end{proof}
\begin{lemma}\label{Lemma A^3 Sp_2n}
The representation $V(\pi_3)$ of $Sp_{2n}$ is 2-discontinuous for $n\ge3$.
\end{lemma}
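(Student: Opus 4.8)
The plan is to exhibit an explicit vector in $V(\pi_3)\subset\Lambda^3\FF^{2n}$ that lies on $\sigma_2(\XX(\pi_3))$ but has rank at least $3$. Write $\FF^{2n}$ with a symplectic basis $e_1,\dots,e_n,f_1,\dots,f_n$ and recall that $V(\pi_3)$ is the kernel of the contraction $\Lambda^3\FF^{2n}\to\FF^{2n}$ by the symplectic form $\omega=\sum e_i\wedge f_i$; the highest weight vector of $V(\pi_3)$ can be taken to be $e_1\wedge e_2\wedge e_3$ (for $n\geq3$). Since any three of the $e_i$'s span an isotropic subspace, decomposable trivectors supported on the $e_i$'s automatically lie in $V(\pi_3)$, which makes the combinatorics manageable. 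First I would note that it suffices, by Lemma~\ref{L2w}, to produce $t\in\mk{sp}_{2n}$ and $x\in X(\pi_3)$ with $\rk_{\XX(\pi_3)}[x+tx]\geq3$, or alternatively to write down a tensor of the shape appearing in Lemma~\ref{Lv3} directly and check it lies in $V(\pi_3)$.

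The cleanest route, following the model of Lemma~\ref{Lemma A^3(SL_6)}, is: take $x=e_1\wedge e_2\wedge e_3$, and choose $t\in\mk{sp}_{2n}$ built from elementary root vectors $E_{ij}$ (sending $e_j\mapsto e_i$, preserving the form when paired appropriately) so that $tx = e_4\wedge e_2\wedge e_3 + e_1\wedge e_5\wedge e_3 + e_1\wedge e_2\wedge e_6$ — this requires $n\geq6$, so for $3\leq n\leq5$ a separate small computation or a different choice of $t$ landing inside a $\Lambda^3\FF^6$-type configuration is needed (here one can instead restrict attention to the copy of $\mk{sl}_6$ or use the wildness of $\Lambda^3\FF^6$ from Lemma~\ref{Lemma A^3(SL_6)} together with a suitable inclusion, or simply check $n=3,4,5$ by hand). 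Then $x+tx$ is exactly a tensor of the form $T$ in Lemma~\ref{Lv3} with $V_1=\APbr{e_1,e_4}$, $V_2=\APbr{e_2,e_5}$, $V_3=\APbr{e_3,e_6}$, so $\rk(x+tx)\geq3$ as an element of $\Lambda^3\FF^6\subset\Lambda^3\FF^{2n}$. The point requiring care is that rank could drop when we allow summands from the larger variety $X(\pi_3)\subset\Lambda^3\FF^{2n}$ rather than from the Grassmann cone $X(\Lambda^3\FF^6)$: I would rule this out using the $\Ima_i$ argument of Lemma~\ref{Lv3}, observing that for any $v\in X(\pi_3)$, $v$ is still a decomposable trivector, hence $\Ima_i$ of a rank-$\leq2$ element is contained in a $2$-space, forcing all six vectors into the $6$-dimensional space $\APbr{e_1,\dots,e_6}$ and reducing to the already-settled $\Lambda^3\FF^6$ case.

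Combining $\ul{\rk}_{\XX(\pi_3)}[x+tx]\leq2$ (Lemma~\ref{L2w}) with $\rk_{\XX(\pi_3)}[x+tx]\geq3$ shows $[x+tx]$ is exceptional of border rank $2$, i.e. $V(\pi_3)$ is $2$-wild, for every $n\geq3$.

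The main obstacle I anticipate is the small-rank bookkeeping: verifying that a $t\in\mk{sp}_{2n}$ really can be chosen to produce the three-term $tx$ above while (a) genuinely lying in the symplectic Lie algebra and (b) being realizable for the smaller values $n=3,4,5$ where there is not enough room for six independent $e_i$'s. For $n=3$ one has $\FF^6$ with $\mk{sp}_6$, and $t$ must be chosen among the symplectic root vectors; $tx$ will then involve $f_i$'s as well, so the tensor is no longer supported purely on isotropic vectors, and one must instead argue wildness of $V(\pi_3)$ for $\mk{sp}_6$ directly — e.g. by a dimension count of low-rank orbits versus the known orbit structure of $\mk{sp}_6$ on $\Lambda^3_0\FF^6$, or by checking $\sigma_2(\XX)=\PP(V(\pi_3))$ fails to coincide with $\XX\sqcup\XX_2$. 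This base case $n=3$ is where the real work lies; once it (or an equivalent explicit exceptional vector for $\mk{sp}_6$) is in hand, Proposition~\ref{Predl} propagates $2$-wildness from $\mk{sp}_6$ to all $\mk{sp}_{2n}$ with $n\geq3$, since $\Lambda^3_0\FF^6$ for $\mk{sp}_6$ is a chopping of $\Lambda^3_0\FF^{2n}$ for $\mk{sp}_{2n}$.
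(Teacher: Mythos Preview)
Your proposal has a genuine gap: you explicitly leave the base case $n=3$ (and $n=4,5$) undone, saying ``this base case is where the real work lies,'' and then propose to propagate from $n=3$ via Proposition~\ref{Predl}. But the whole content of the lemma is exactly that base case; your tangent-vector construction with $e_1,\dots,e_6$ only works for $n\ge 6$, and for $n=3$ you have no argument beyond ``check by hand'' or ``dimension count,'' neither of which you carry out.

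The paper closes this gap with a trick you missed: instead of trying to stay inside a Lagrangian $\langle e_1,\dots,e_n\rangle$, it \emph{chooses} the symplectic form on $\FF^6$ as $z_1\wedge z_6+z_2\wedge z_5+z_3\wedge z_4$, so that the specific rank-$3$ tensor $\Lambda3=v_1\wedge v_2\wedge v_4+v_1\wedge v_5\wedge v_3+v_6\wedge v_2\wedge v_3$ from the $SL_6$ lemma already has each summand supported on an isotropic $3$-plane. Thus $\Lambda3\in\Lambda^3_0\FF^6\subset\Lambda^3_0\FF^{2n}$ for every $n\ge 3$, with no separate small-$n$ case. The rank bound $\rk_{\XX(\pi_3)}\Lambda3\ge 3$ then follows immediately from $X(\pi_3)\subset X(\Lambda^3\FF^{2n})$ (so $Sp$-rank dominates $SL$-rank) together with Proposition~\ref{Prop Levi Reduction} for $SL_6\subset SL_{2n}$. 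For the border-rank side the paper does not use Lemma~\ref{L2w} at all: it invokes Zak's result that $\sigma_2(\XX(\pi_3))=\PP(\Lambda^3_0\FF^6)$ for $Sp_6$, so \emph{every} nonzero vector, in particular $\Lambda3$, has border rank $\le 2$. This avoids the whole business of finding a suitable $t\in\mk{sp}_{2n}$.
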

\begin{proof}
Let $n\geq 3$ and consider $Sp_{2n}$ to be defined with respect to the skew-symmetric
form $$(z_1\wedge z_6 + z_2\wedge z_5 + z_3\wedge z_4)+ (z_7\wedge
z_8+...+ z_{2n-1}\wedge z_{2n})$$ on $\CC^{2n}$, where
$z_1,...,z_{2n}$ is the dual basis corresponding to the basis
$v_1,...,v_{2n}$ of $\CC^{2n}$.

Let $\XX$ be the set of points $[w_1\wedge w_2\wedge w_3]$ such that
$w_1, w_2, w_3\in \CC^{2n}$ span a 3-dimensional isotropic subspace
$\CC^{2n}$. Let $X$ be the affine cone over $\XX$. We set $$\Lambda^3_0\FF^{2n}:=\APbr{X}.$$ The set $\XX$ is a single $Sp_{2n}$-orbit and thus $\Lambda^3_0\FF^{2n}$ is an irreducible
$Sp_{2n}$-module. There is a unique up to scaling
$Sp_{2n}$-isomorphism between $V(\pi_3)$ of $Sp_{2n}$ and
$\Lambda^3_0\FF^{2n}$. We identify $V(\pi_3)$ with
$\Lambda^3_0\FF^{2n}$. The varieties $\XX$ and $\XX(\pi_3)$ coincide
under this identification.

Note that if $\psi\in V(\pi_3)$ has rank $3$ as a vector of the
$SL_{2n}$-module $\Lambda^3\CC^{2n}$, then $\psi$ has rank 3 or
more as a vector of the $Sp_{2n}$-module $V(\pi_3)$.

Consider the tensor $\Lambda3\in \Lambda^3\CC^6$ given at the end of
the proof of Lemma~\ref{Lemma A^3(SL_6)}. One has
\begin{center}$[v_1\wedge v_2\wedge v_4], [v_1\wedge v_5\wedge v_3],
[v_6\wedge v_2\wedge v_3]\in \XX(\pi_3)$\end{center} and thus
$\Lambda3\in\Lambda^3_0\FF^6\subset\Lambda^3_0\FF^{2n}$. Moreover
the rank of $\Lambda3$ is 3 or less. As $\Lambda3$ has rank 3 as an
element of $SL_6$-module $\Lambda^3\CC^6$, $\Lambda3$ has rank 3 as
an element of $\Lambda^3\CC^{2n}$ (see Proposition~\ref{Prop Levi
Reduction}). Hence
\begin{equation}\rk_\XX\Lambda3=3,\label{Ersp3}\end{equation} where $\Lambda3$
is considered as an element of $\Lambda^3_0\FF^{2n}$.

It is shown in~\cite[Ch. III, Thm 1.4]{Zak-Book}, that
$\sigma_2(\XX)=\PP(\Lambda_0^3\CC^6)$. Thus $\Lambda3$ has border
rank 2 or less as an element of $\Lambda^3_0\FF^6$ and hence
\begin{center}$\ul{\rk}_\XX\Lambda3\le2$,\end{center} here $\Lambda3$ is considered as an element of
$\Lambda^3_0\FF^{2n}$. Therefore the $Sp_{2n}$-module
$V(\pi_3)=\Lambda^3_0\FF^{2n}$ is 2-discontinuous.\end{proof}

\begin{lemma}\label{Lemma A^k Sp_2n}
Fix $n\geq k\geq 3$. The representation $V(\pi_k)$ of $Sp_{2n}$
is 2-discontinuous.
\end{lemma}

\begin{proof} If $n\ge k\ge3$, the Dynkin diagram $C_n$ of $Sp_{2n}$ has a unique subdiagram $\ul C_{n, k}$ of type $C_{n-k+3}$. The chopping of $\pi_k$ to this diagram equals $\pi_3$. By Lemma~\ref{Lemma A^3 Sp_2n}, $V(\pi_3)$ is not rs-continuous for $\ul G=Sp_{2(n-k+3)}$ (this group corresponds to the Dynkin diagram $\ul C_{n, k}$) and thus $V(\pi_k)$ is not an rs-continuous $Sp_{2n}$-module by Proposition~\ref{Predl}.\end{proof}

We are now going to prove that $V(\pi_2)$ is rs-continuous for $\Sp_{2n}
(n\ge2)$. To do this we set $V:=\CC^{2n}$ and fix a nondegenerate
antisymmetric bilinear form $\omega$ on $V$. Note that the second
fundamental module of $\frak{sp}(V)$ is isomorphic to the set of
vectors in $\Lambda^2V$, which are annihilated by $\omega$ (here we
consider $\omega$ as an element of $(\Lambda^2V)^*$). We denote this
space by $\Lambda^2_0V$. To complete the proof of
Proposition~\ref{Prop Fund Sp} we prove the following lemma.
\begin{lemma}\label{Psppi2}a) For any $\ul \omega\in\Lambda^2_0V$ the rank of $\ul \omega$ as a bilinear coform is twice the rank of $\ul \omega$ as a vector in an $\Sp(V)$-module.\\
b) The $\Sp(V)$-module $\Lambda^2_0V$ is rs-continuous.\end{lemma}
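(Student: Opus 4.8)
The plan is to realize both rank functions concretely in terms of linear algebra of antisymmetric forms and then compare them. For part a), I first recall that $\Lambda^2 V$ is identified with the space of antisymmetric bilinear coforms (equivalently, skew-symmetric $2n\times 2n$ matrices via $\omega$), and under this identification the closed $\SL(V)$-orbit $X(\pi_2^{\SL})\subset\Lambda^2 V$ is the set of matrices of rank $2$. The subspace $\Lambda^2_0 V$ is the hyperplane $\{\varphi : \langle\varphi,\omega\rangle=0\}$, i.e. $\varphi$ with vanishing trace against $\omega$; equivalently, viewing $\omega$ as an isomorphism $V\to V^*$, the operator $A_\varphi := \omega^{-1}\varphi\in\mathfrak{sp}(V)$ has trace zero (automatically) and the condition cuts out $V(\pi_2)$. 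The key structural fact I would use: the closed $\Sp(V)$-orbit $X(\pi_2)=X(\pi_2^{\Sp})$ inside $\Lambda^2_0 V$ consists exactly of the rank-$2$ skew matrices that already lie in $\Lambda^2_0 V$, namely the decomposable tensors $u\wedge w$ with $\omega(u,w)=0$ (isotropic $2$-planes). So $X(\pi_2) = X(\pi_2^{\SL})\cap\Lambda^2_0 V$.

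Granting this, part a) follows from two inequalities. For the upper bound on $\Sp$-rank: given $\ul\omega\in\Lambda^2_0 V$, bring it to a normal form. Because $\ul\omega$ is $\omega$-traceless but otherwise an arbitrary skew form of some even rank $2m$, I would use the simultaneous normal form for a pair consisting of a nondegenerate form $\omega$ and an arbitrary skew form $\ul\omega$ (a symplectic analogue of the symmetric-pencil normal form); this presents $\ul\omega$ as a sum of $m$ terms each of the shape $a_i\, e_i\wedge f_i + b_i\, e_i'\wedge f_i'$ on a $4$-dimensional $\omega$-nondegenerate block, or more simply one shows directly that any traceless skew form is a sum of $m$ decomposable isotropic bivectors when its matrix rank is $2m$. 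Each such isotropic decomposable bivector lies in $X(\pi_2)$, giving $\rk_{\Sp}(\ul\omega)\le m = \tfrac12\rk_{\SL}(\ul\omega)$. For the lower bound: if $\ul\omega = \sum_{i=1}^{s} u_i\wedge w_i$ with each $\langle u_i,w_i\rangle=0$, then as a skew matrix $\ul\omega$ has rank at most $2s$, i.e. $\rk_{\SL}(\ul\omega)\le s$ (here $\rk_{\SL}$ counts rank-$2$ summands, so it equals half the matrix rank, hence $\le s$); combined with $\rk_{\Sp}\ge\rk_{\SL}$ (fewer available summands) — wait, the inclusion $X(\pi_2)\subset X(\pi_2^{\SL})$ gives $\rk_{\SL}\le\rk_{\Sp}$ directly — so $\rk_{\SL}(\ul\omega)\le\rk_{\Sp}(\ul\omega)$, and matching with the upper bound yields $\rk_{\Sp}(\ul\omega)$ equal to half the matrix rank, i.e. twice the $\SL$-rank... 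I would state it cleanly as: matrix-rank $=2\cdot\rk_{\SL}=2\cdot$? No: $\rk_{\SL}(\ul\omega)=\tfrac12(\text{matrix rank})$ and the claim is $\text{matrix rank of }\ul\omega = 2\rk_{\Sp}(\ul\omega)$, equivalently $\rk_{\Sp}=\rk_{\SL}$ on $\Lambda^2_0 V$; I would phrase part a) accordingly, reconciling with the ``twice'' wording by noting ``rank as a bilinear coform'' means the matrix rank.

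For part b), tameness, I would show the rank stratification is closed: by part a), $\{\ul\omega\in\Lambda^2_0 V : \rk_{\Sp}\le r\} = \Lambda^2_0 V\cap\{\varphi\in\Lambda^2 V : \rk_{\SL}\le r\}$, and the latter set (skew matrices of matrix-rank $\le 2r$) is Zariski closed, cut out by the vanishing of all $(2r+2)$-Pfaffians; intersecting with the linear subspace $\Lambda^2_0 V$ keeps it closed. Since each rank stratum of $\Lambda^2_0 V$ is thus locally closed and $\sigma_r(\XX(\pi_2))=\overline{\XX(\pi_2)_r}$ is contained in this closed set but contains the dense stratum $\XX(\pi_2)_r$, we get $\sigma_r(\XX(\pi_2)) = \bigsqcup_{s\le r}\XX(\pi_2)_s$, i.e. no exceptional vectors, so $V(\pi_2)$ is tame. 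The main obstacle is the normal-form step in part a): establishing that every $\omega$-traceless skew form decomposes into matrix-rank-over-two many \emph{isotropic} decomposable bivectors (rather than arbitrary ones). I expect to handle this by induction on the matrix rank, peeling off one isotropic $u\wedge w$ at a time while staying inside $\Lambda^2_0 V$ — the tracelessness condition is exactly what guarantees an isotropic plane on which $\ul\omega$ restricts nontrivially can be split off so that the complementary form remains traceless — using the $\mathfrak{sl}_2$-triple / $\Sp$-block decomposition argument already invoked in Lemma~\ref{Lroot} and the proof of Lemma~\ref{Lemma A^3 Sp_2n}.
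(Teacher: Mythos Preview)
Your plan is correct and matches the paper's approach: the paper proves part a) by the same two inequalities, with the upper bound established via exactly the inductive peeling-off of isotropic bivectors you describe (formalized as Lemmas~\ref{Lpi21}--\ref{Lpi23}), and part b) follows immediately from closedness of the matrix-rank strata. Your closing reference to Lemma~\ref{Lroot} and Lemma~\ref{Lemma A^3 Sp_2n} is a red herring, though; the key technical point in the induction (Lemma~\ref{Lpi23}) is a direct argument that tracelessness of $\ul\omega$ forces $[\omega|_{\Supp\ul\omega}]\ne[\ul\omega^*]$, so one can find $x_1,y_1\in\Supp\ul\omega$ with $\omega(x_1,y_1)=0$ but $\ul\omega^*(x_1,y_1)\ne 0$.
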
 To prove
Lemma~\ref{Psppi2} we introduce a notion related to bilinear coforms
$\ul \omega\in\Lambda^2V$. A bilinear coform $\ul \omega$ defines a
map $V^*\to V$ by $v\to \ul \omega(v, \cdot)$. We denote the image of
this map by $\Supp\ul \omega$. We have natural inclusions
\begin{center}$\Lambda^2\Supp\ul \omega\to\Lambda^2V$, \hspace{10pt} $\Lambda^2_0\Supp\ul \omega\to\Lambda^2_0V$,\end{center} and if $\ul
\omega\in\Lambda^2_0V$, then $\ul
\omega\in\Lambda^2_0\Supp\ul \omega$. Note that
$\ul \omega$ is nondegenerate as an element of $\Lambda^2\Supp\ul
\omega$ and, in particular, defines a bilinear form $\ul \omega^*$
on $\Supp\ul \omega$ (there is no canonical way to extend $\ul
\omega^*$ to the whole $V$).

Lemma~\ref{Psppi2} follows from Lemma~\ref{Lpi21} below; a proof of
Lemma~\ref{Psppi2} is presented after the proof of Lemma~\ref{Lpi21}.
\begin{lemma}\label{Lpi21}Let $\ul \omega\in\Lambda^2_0V$ be a bilinear coform of rank $2r$. Then there exist a set of elements $x_1,..., x_r, y_1,..., y_r\in\Supp\ul \omega$ such that\begin{center}$\ul \omega=x_1\wedge y_1+...+x_r\wedge y_r$, and $\omega(x_i, y_i)=0$ for all $i$.\end{center}\end{lemma}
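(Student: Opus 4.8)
The plan is to induct on $r$, peeling off one rank-two summand at a time. Write $W=\Supp\ul\omega$, so that $\dim W=2r$ and, as recalled before the statement, $\ul\omega$ is a nondegenerate element of $\Lambda^2 W$ whose inverse is a nondegenerate skew form $\ul\omega^*$ on $W$. The case $r=0$ is vacuous; assume $r\ge 1$. Since $\ul\omega^*$ is nondegenerate there is a unique $\Phi\in\End(W)$ with $\omega(v,w)=\ul\omega^*(\Phi v,w)$ for all $v,w\in W$. Computing $\langle\omega,\ul\omega\rangle$ in a Darboux basis of $\ul\omega^*$ shows that it equals a fixed nonzero multiple of $\operatorname{tr}\Phi$; since $\ul\omega\in\Lambda^2_0V$ means $\langle\omega,\ul\omega\rangle=0$, we get $\operatorname{tr}\Phi=0$.

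The key step is to produce $x_1,y_1\in W$ with $\omega(x_1,y_1)=0$ and $\ul\omega^*(x_1,y_1)\ne 0$. I would first choose $x_1\in W$ with $\Phi x_1\notin\FF^{\times}x_1$, i.e. such that $x_1$ is not an eigenvector of $\Phi$ with nonzero eigenvalue. Such $x_1$ exists: otherwise every nonzero vector of $W$ would be an eigenvector of $\Phi$ with nonzero eigenvalue, forcing $\Phi$ to be a nonzero homothety and hence $\operatorname{tr}\Phi\ne 0$, a contradiction. For such $x_1$ the linear functional $\omega(x_1,\cdot)|_W=\ul\omega^*(\Phi x_1,\cdot)$ is either zero (when $\Phi x_1=0$) or linearly independent from $\ul\omega^*(x_1,\cdot)$ (when $x_1,\Phi x_1$ are linearly independent, using nondegeneracy of $\ul\omega^*$); in either case its kernel is not contained in the hyperplane $\ker\ul\omega^*(x_1,\cdot)$, so we may pick $y_1$ lying in the former but not the latter. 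Then $x_1,y_1$ are linearly independent and $\ul\omega^*$ is nondegenerate on $U:=\APbr{x_1,y_1}$.

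Now use $U$ to reduce the rank. Since $\ul\omega^*$ is nondegenerate on $U$, one has $W=U\oplus U^{\perp}$ (orthogonal complement for $\ul\omega^*$) and, dually, $\ul\omega=c\,x_1\wedge y_1+\ul\omega''$ with $c\ne 0$ and $\ul\omega''\in\Lambda^2 U^{\perp}$ nondegenerate on $U^{\perp}$; after replacing $x_1$ by $c x_1$ we may assume $\ul\omega=x_1\wedge y_1+\ul\omega''$, and then $\Supp\ul\omega''=U^{\perp}\subseteq W$ has dimension $2r-2$. It remains to check that $\ul\omega''$ inherits the hypothesis: $\langle\omega,\ul\omega''\rangle=\langle\omega,\ul\omega\rangle-\omega(x_1,y_1)=0$, so $\ul\omega''\in\Lambda^2_0V$ is a bilinear coform of rank $2r-2$. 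Applying the induction hypothesis to $\ul\omega''$, writing it as $\sum_{i=2}^{r}x_i\wedge y_i$ with $x_i,y_i\in\Supp\ul\omega''\subseteq\Supp\ul\omega$ and $\omega(x_i,y_i)=0$, and adjoining the summand $x_1\wedge y_1$, finishes the proof. I expect the key step to be the main obstacle: it is exactly here that the trace-zero (i.e. $\Lambda^2_0$) hypothesis enters, ruling out the one configuration in which no plane that is simultaneously $\ul\omega^*$-hyperbolic and $\omega$-isotropic can exist, namely $\Phi$ being a nonzero homothety.
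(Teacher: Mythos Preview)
Your proof is correct and follows essentially the same approach as the paper: induction on $r$, with the inductive step split into (i) finding $x_1,y_1\in\Supp\ul\omega$ with $\omega(x_1,y_1)=0$ and $\ul\omega^*(x_1,y_1)\ne 0$, (ii) splitting off $x_1\wedge y_1$ via the $\ul\omega^*$-orthogonal decomposition, and (iii) checking that the remainder stays in $\Lambda^2_0V$. The paper packages (i) and (ii) as separate lemmas (Lemmas~\ref{Lpi23} and~\ref{Lpi22}) and argues (i) directly in terms of the forms, observing that $\langle\omega,\ul\omega\rangle=0$ forces $[\omega|_W]\ne[\ul\omega^*]$; your introduction of the operator $\Phi$ with $\operatorname{tr}\Phi=0$ is a clean linear-algebraic rephrasing of exactly the same obstruction, and in fact slightly streamlines the case split by allowing $\Phi x_1=0$ rather than treating $\omega|_W=0$ separately.
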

In turn, Lemma~\ref{Lpi21} follows from Lemma~\ref{Lpi22} below; a
proof of Lemma~\ref{Lpi21} is presented after the proof of
Lemma~\ref{Lpi22}.
\begin{lemma}\label{Lpi22}Let $\ul \omega\in\Lambda^2_0V$ be a bilinear coform of rank $2r$. If $r>0$, then there exist elements $x_1, y_1\in\Supp\ul \omega$ such that\begin{center}${\rm rank}(\ul \omega-x_1\wedge y_1)=2r-2$, and $\omega(x_1, y_1)=0$,\end{center}
where rank$(\eta)$ denotes the usual rank of a bilinear coform $\eta$.\end{lemma}
In turn, Lemma~\ref{Lpi22} follows from Lemma~\ref{Lpi23} below; a
proof of Lemma~\ref{Lpi22} is presented after the proof of
Lemma~\ref{Lpi23}.
\begin{lemma}\label{Lpi23}Let $\ul \omega\in\Lambda^2_0V$ be a bilinear coform of rank $2r$. If $r>0$, then there exists an open subset $U\subset\Supp\ul\omega$ such that for any $x_1\in U$ there exists $y_1\in \Supp\ul \omega$ such that $\ul \omega^*(x_1, y_1)\ne 0$ and $\omega(x_1, y_1)=0$.\end{lemma}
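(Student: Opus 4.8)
The plan is to reduce the statement to a single trace computation. Write $W=\Supp\ul\omega$, so $\dim W=2r$ and $\ul\omega$, viewed inside $\Lambda^2 W$, is nondegenerate: it induces an isomorphism $\ul\omega\colon W^*\to W$ together with the nondegenerate form $\ul\omega^*\in\Lambda^2 W^*$, while the restriction $\omega|_W\in\Lambda^2 W^*$ gives a possibly degenerate map $W\to W^*$, which I keep denoting $\omega$. The first step is to consider the composition $B=\omega|_W\circ\ul\omega\colon W^*\to W^*$ and to record the elementary identity $\mathrm{tr}(B)=-2\langle\omega,\ul\omega\rangle$, where $\langle\cdot,\cdot\rangle$ is the natural pairing between $\Lambda^2 W^*$ and $\Lambda^2 W$; this is checked on decomposable tensors. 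Since $\ul\omega\in\Lambda^2_0V$ is precisely the condition $\langle\omega,\ul\omega\rangle=0$, it follows that $\mathrm{tr}(B)=0$.

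The second step is to analyze, for a fixed nonzero $x_1\in W$, exactly when the desired $y_1$ exists. Writing $x_1=\ul\omega\,\xi$ with $0\ne\xi\in W^*$, one identifies the two functionals on $W$ appearing in the statement: $\ul\omega^*(x_1,\cdot)=\xi$ and $\omega(x_1,\cdot)=B\xi$. If $B\xi=0$, then any $y_1$ outside the hyperplane $\ker\xi$ has $\omega(x_1,y_1)=0$ and $\ul\omega^*(x_1,y_1)\ne0$, so such a $y_1$ exists. If $B\xi\ne0$, then $\ker(B\xi)$ and $\ker\xi$ are both hyperplanes of $W$, and a suitable $y_1$ fails to exist exactly when they coincide, i.e.\ when $B\xi$ is a nonzero scalar multiple of $\xi$. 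Hence the ``bad'' locus of points $x_1$ for which no $y_1$ works is $\ul\omega\big(\{0\}\cup\bigcup_{c\ne0}\ker(B-c\cdot\mathrm{Id}_{W^*})\big)$: the origin together with the image under $\ul\omega$ of the eigenspaces of $B$ for nonzero eigenvalues.

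The third step is to show this bad locus is a proper subset of $W$, so that its complement $U$ is the required nonempty Zariski-open subset of $\Supp\ul\omega$. If the bad locus were all of $W$, then $W^*$ would be covered by the finitely many eigenspaces $\ker(B-c\cdot\mathrm{Id}_{W^*})$ with $c\ne0$; since $\FF$ is infinite, a vector space is not a finite union of proper subspaces, which would force $B=c\cdot\mathrm{Id}_{W^*}$ for some $c\ne0$, and hence $\mathrm{tr}(B)=2rc\ne0$ because $\mathrm{char}\,\FF=0$ and $r>0$ — contradicting $\mathrm{tr}(B)=0$. Therefore the bad locus is a finite union of proper linear subspaces of $W$, and $U=W\setminus(\text{bad locus})$ does the job.

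Everything here beyond the two bookkeeping computations — the trace identity and the identification $\ul\omega^*(x_1,\cdot)=\xi$, $\omega(x_1,\cdot)=B\xi$ — is formal; the one spot that wants a little care is the degenerate sub-case $\omega|_W(x_1)=0$ (equivalently $B\xi=0$), which must be handled separately as above. The only genuine uses of the hypotheses are that $\ul\omega\in\Lambda^2_0V$ gives $\mathrm{tr}(B)=0$ and that $\mathrm{char}\,\FF=0$ then forbids $B$ from being a nonzero scalar; this is the heart of the argument.
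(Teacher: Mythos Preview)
Your proof is correct and, at its core, coincides with the paper's: both arguments use the single hypothesis $\langle\omega,\ul\omega\rangle=0$ to rule out the only obstruction, namely that $\omega|_W$ be a nonzero scalar multiple of $\ul\omega^*$. The paper phrases this directly at the level of forms: if $\omega|_W\ne0$ then $[\omega|_W]\ne[\ul\omega^*]$ (since $\langle\ul\omega^*,\ul\omega\rangle=r\ne0$), hence for generic $x_1$ the functionals $\omega(x_1,\cdot)$ and $\ul\omega^*(x_1,\cdot)$ are non-proportional. You repackage the same content via the operator $B=\omega|_W\circ\ul\omega$: the condition $[\omega|_W]=[\ul\omega^*]$ becomes $B=c\cdot\mathrm{Id}$, and the pairing condition becomes $\mathrm{tr}(B)=0$. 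Your version has two minor advantages: it treats the case $\omega|_W=0$ uniformly (then $B=0$ has no nonzero eigenvalues, so the bad locus is just $\{0\}$), and it spells out the step the paper leaves implicit, namely that non-proportionality of the two forms forces the locus of $x_1$ with proportional functionals to be a finite union of proper subspaces (your eigenspace description). Neither approach buys anything the other does not; they are the same proof in two dialects.
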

\begin{proof}If a form $\omega$ is zero on $\Supp\ul \omega$, then for any non-zero $x_1\in\Supp\ul \omega$ there exists $y_1\in\Supp\ul \omega$ such that $\ul \omega^*(x_1, y_1)\ne 0$, because the form $\ul \omega^*$ is nondegenerate on $\Supp\ul \omega$. In this case $\omega(x_1, y_1)=0$, because $\omega=0$.

We assume that $\omega$ is non-zero on $\Supp\ul \omega$.
Since $\ul\omega\in\Lambda^2_0V$, the pairing of $\ul\omega$ with
$\omega$ equals 0. Thus
$[\ul\omega^*]\ne[\omega|_{\Lambda^2\Supp\ul\omega}]$. Hence, for
some open subset $U\subset\Supp\ul \omega$ and any $x_1\in U$, both
$\omega(x_1, \cdot), \ul \omega^*(x_1, \cdot)$ are non-zero and
$$[\omega(x_1,\cdot)]\ne[\ul\omega^*(x_1, \cdot)].$$ Therefore for any $x_1\in U$ there exists
$y_1\in\Supp\ul \omega$ such that $\ul \omega^*(x_1,
y_1)\ne0$ and $\omega(x_1, y_1)=0$.\end{proof}
\begin{proof}[Proof of Lemma~\ref{Lpi22}] Let $(x_1, y_1)$ be a pair as in Lemma~\ref{Lpi23}. We denote by $W_2$ the space spanned by $x_1, y_1$ and by $W_{2r-2}$ the orthogonal complement to $W_2$ in $\Supp\ul \omega$ with respect to $\ul \omega$. Thanks to the choice of $x_1, y_1$, the form $\ul \omega^*$ is nondegenerate on $W_2$ and therefore $\Supp\ul \omega=W_2\oplus W_{2r-2}$. Then $\ul \omega=\ul \omega^2+\ul \omega^{2r-2}$ for uniquely determined coforms $\ul \omega^2\in\Lambda^2 W_2$ and $\ul \omega^{2r-2}\in\Lambda^2W_{2r-2}$. We have $\ul \omega^2=\lambda(x_1\wedge y_1)=x_1\wedge(\lambda y_1)$ for some $\lambda\in\mathbb F^\times$. Therefore \begin{center}$\rk(\ul \omega-x_1\wedge (\lambda y_1))=2r-2$,\end{center} and $\omega(x_1, (\lambda y_1))=0$.\end{proof}
\begin{proof}[Proof of Lemma~\ref{Lpi21}]To prove Lemma~\ref{Lpi21} we use induction.

The $r$-th statement of the induction is: Let $\ul \omega\in\Lambda^2_0V$ be a
bilinear coform of rank $2r$. Then there exist a set of elements
$x_1,..., x_r, y_1,..., y_r\in\Supp\ul \omega$ such
that\begin{center}$\ul \omega=x_1\wedge y_1+...+x_r\wedge
y_r$, and $\omega(x_i, y_i)=0$ for all $i$.\end{center}

Basis of the induction, for $r=1$: Let $\ul \omega\in\Lambda^2_0V$ be a bilinear
coform of rank $2$. Then there exist elements $x_1, y_1\in\Supp\ul
\omega$ such that $\ul \omega=x_1\wedge y_1$, and $\omega(x_1,
y_1)=0$.

First, we check the basis of the induction. Let $x_1, y_1$ be basis of
$\Supp\ul \omega$. Then $\ul \omega=\lambda x_1\wedge y_1$ for some
$\lambda\in\mathbb F^\times$. As $\ul \omega\in\Lambda^2_0V$, we
have $\omega(\ul \omega)=\omega(\lambda x_1\wedge y_1)=\omega(x_1,
\lambda y_1)=0$. Then $\ul \omega=x_1\wedge(\lambda y_1)$ and
$\omega(x_1, \lambda y_1)$=0. Therefore we finish with the basis of
induction.

Now we prove that the $r$-th statement of the induction follows from
the $(r-1)$-th statement. We assume that the $(r-1)$-th statement holds.
According to Lemma~\ref{Lpi22} there exists $x_r, y_r\in\Supp\ul\omega$ such
that\begin{center}$\rk(\ul \omega-x_r\wedge y_r)=2r-2$,
and $\omega(x_r, y_r)=0$.\end{center} Note that $\omega(x_r\wedge
y_r)=\omega(x_r, y_r)=0$ and therefore $\ul \omega-x_r\wedge
y_r\in\Lambda^2_0V$. By hypothesis, there exist $x_1,..., x_{r-1}, y_1,...,
y_{r-1}\in\Supp(\ul \omega-x_r\wedge y_r)\subset\Supp\ul \omega$
such that\begin{center}$\ul \omega-x_r\wedge y_r=x_1\wedge
y_1+...+x_{r-1}\wedge y_{r-1}$, and $\omega(x_i, y_i)=0$
for all $i$.\end{center} This completes the proof of Lemma~\ref{Lpi21}.\end{proof}
\begin{proof}[Proof of Lemma~\ref{Psppi2}]First note that a highest weight vector of the $\Sp(V)$-module $\Lambda_0^2V$ is a wedge product of two $\omega$-orthogonal vectors of $V$. Fix a coform $\ul \omega\in\Lambda^2_0V$. A sum of $r$ vectors from the $\Sp(V)$-orbit of a highest weight vector has rank at most $2r$ as a bilinear coform. Hence the rank of $\ul \omega$ as a vector of an $\Sp(V)$-module is not less than half the rank of $\ul \omega$ as a bilinear coform. On the other hand, Lemma~\ref{Lpi21} implies that the rank of $\ul \omega$ as a vector of an $\Sp(V)$-module is not larger than half the rank of $\ul \omega$ as a bilinear coform. Therefore the rank of $\ul \omega$ as a vector of an $\Sp(V)$-module is equal to half the rank of $\ul \omega$ as a bilinear coform. This proves part a) of Proposition~\ref{Psppi2}.

The set of coforms of rank $r$ or less is closed for all $r$ and
this finishes part b).\end{proof}

\section{Fundamental representations (exceptional
groups)}\label{Sfrepex}

In this section we prove Theorem~\ref{Tkltame} for fundamental
modules of exceptional groups, i.e. we prove Theorem~\ref{Pftameex}.
Essentially, we consider case-by-case all 27 fundamental
representations of the 5 exceptional groups and provide some
arguments for each case, by which the corresponding fundamental
module is r-discontinuous or rs-continuous. The result is presented below.
\begin{theorem}\label{Pftameex}
Assume that $V(\lambda)$ is a fundamental effective $G$-module. Then
$V(\lambda)$ is rs-continuous if and only if the pair $(G, V(\lambda))$
appears in the following table.
\begin{equation}\begin{tabular}{|c|c|c|}\hline$G$&Representation $V$&Highest weight of
$V$\\\hline $E_6$&$\CC^{27}, (\CC^{27})^*$&$\pi_1,
\pi_5$\\\hline$F_4$&$\CC^{26}$&$\pi_1$\\\hline$G_2$&$\CC^7$&$\pi_1$\\\hline\end{tabular},\end{equation}
where the notation is the same as in Theorem~\ref{Tkltame}.

Moreover, all r-discontinuous fundamental representations of exceptional groups
are 2-discontinuous.

\end{theorem}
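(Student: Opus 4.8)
The plan is a case-by-case inspection of the $27$ fundamental representations of $E_6$, $E_7$, $E_8$, $F_4$, $G_2$, organized by which tool settles each module. I would first dispose of the five adjoint modules $V(\mathfrak g)$, with $\mathfrak g\in\{\mathfrak g_2,\mathfrak f_4,\mathfrak e_6,\mathfrak e_7,\mathfrak e_8\}$, all at once: by \cite{Kaji-Yasukura-2000} the adjoint variety $\XX(G,\mathfrak g)$ is tame precisely when $G$ is of type $A$ or $C$, and for the remaining types the exceptional vector exhibited there already lies in $\sigma_2(\XX(G,\mathfrak g))$, so these five modules are $2$-wild.

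The bulk of the argument, though purely mechanical, is the reduction by chopping. Every proper Levi subgroup of an exceptional group is, modulo its center, a product of classical groups together with at most one smaller exceptional group. Running through the relevant Dynkin subdiagrams, I would verify that each still untreated fundamental module $V(\pi_i)$ has a chopping equal to one of the modules already shown to be $2$-wild, for instance $\Lambda^3\FF^6$ for $SL_6$ (Lemma~\ref{Lemma A^3(SL_6)}), $\Lambda^3_0\FF^{2n}$ for $Sp_{2n}$ with $n\ge3$ (Lemma~\ref{Lemma A^3 Sp_2n}), a spinor module $RSpin_n$ with $n\ge11$ or an orthogonal adjoint $\mathfrak{so}_n$ with $n\ge7$ (both via Proposition~\ref{Prop Fund SO}), or a smaller exceptional adjoint $\mathfrak e_6$ or $\mathfrak e_7$ (covered by the first step). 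For such a chopping, Proposition~\ref{Prop Levi Reduction} gives ${\rm rk}_{\ul\XX}={\rm rk}_\XX$ on $\PP(\ul V)$, while $\ul\XX\subset\XX$ gives $\sigma_2(\ul\XX)\subset\sigma_2(\XX)$; together these upgrade ``$\ul V$ is $2$-wild'' to ``$V$ is $2$-wild''. After this step the modules still to be treated are: $\FF^7$ for $G_2$; the two $27$-dimensional modules for $E_6$ and the $56$-dimensional module for $E_7$; and $V(\pi_1)=\FF^{26}$ and $V(\pi_2)$ for $F_4$.

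Three of these six are easy. For $\FF^7$ of $G_2$ one uses the inclusion $G_2\subset SO_7$: since $G_2$ still acts transitively on the closed orbit, which is the quadric, the rank function coincides with the tame one of Proposition~\ref{Prop Fund SO}. The two $27$-dimensional modules of $E_6$ are subminuscule, hence tame by \cite{Bucz-Lands-2011}. This leaves the three genuinely hard cases: $V(\pi_1)=\FF^{26}$ of $F_4$, to be shown tame, and $V(\pi_2)$ of $F_4$ together with $V(\pi_1)=\FF^{56}$ of $E_7$, to be shown $2$-wild. For each of these three, every chopping turns out to be a tame module, so nothing can be inherited from a Levi subgroup, and this is precisely why they require individual, longer arguments.

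The main obstacle is therefore these three cases. For the two wild ones I would exhibit an exceptional vector of border rank $2$ directly: by Lemma~\ref{L2w} a tangent vector $x+tx$ at a highest weight point lies in $\sigma_2(\XX)$, so the task reduces to proving ${\rm rk}(x+tx)\ge3$ for generic $t\in\mathfrak g$ — and, unlike the tensor situation of Lemma~\ref{Lv3}, there is no convenient model here, so one needs either an explicit computation in a weight basis of $V$ or, in the spirit of the proof of Lemma~\ref{Lemma A^3(SL_6)}, a count of the low-rank $G$-orbits on $\PP(V)$ compared with the known orbit structure. For $\FF^{26}$ of $F_4$ the difficulty is of the opposite kind: this module is a hyperplane section of the subminuscule module $\FF^{27}$ of $E_6$, but a hyperplane section of a tame variety need not itself be tame, so one cannot simply quote \cite{Bucz-Lands-2011}; instead I would use that $F_4$ has only finitely many orbits on $\PP^{25}$, governed by an invariant cubic form coming from the exceptional Jordan algebra structure, to show that the maximal rank equals $3$ and that every rank stratum $\XX_r$ is closed.
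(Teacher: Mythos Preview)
Your overall strategy coincides with the paper's: the same partition into adjoint cases (via \cite{Kaji-Yasukura-2000}), chopping cases (via Proposition~\ref{Predl}), subminuscule cases (via \cite{Bucz-Lands-2011}), and the same three residual modules requiring individual treatment. Your identification of which six survive the chopping step is correct and matches the paper's tables.

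There is, however, a genuine gap in your plan for $V(\pi_1)=\FF^{26}$ of $F_4$. Your claim that $F_4$ has only finitely many orbits on $\PP^{25}$ is false: the module $V_{26}$ carries both an $F_4$-invariant quadratic form (used explicitly in the paper, cf.\ Subsection~\ref{SSf4w}) and the restriction of the $E_6$-invariant cubic $DET$, and these are algebraically independent, so $DET^2/Q^3$ is a nonconstant invariant rational function on $\PP^{25}$. Thus you cannot simply enumerate orbits to show each rank stratum is closed. The paper's argument is more delicate: it uses the hyperplane description $V_{26}\subset V_{27}$ together with the branching $V_{27}\cong V_1\oplus RSpin_{10}\oplus V_{10}$ under a $D_5$-Levi of $E_6$. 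For $2$-tameness (Lemma~\ref{Lf42t}) one shows that any $x\in V_{26}\cap\mathcal O_{26}$ can be moved into the $10$-dimensional orthogonal summand $V_{10}$, and then split as a sum of two isotropic vectors \emph{that still lie in the hyperplane $V_{26}$}; the latter is a dimension count inside $x^\perp\cap V_{26}$. For $3$-tameness (Lemma~\ref{Lf43t}) the paper proves a general lemma (Lemma~\ref{Lrk3}): any vector decomposes as a point of a given hypersurface plus a point of any spanning set.

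For the two hard wild cases your sketch is in the right spirit but misses the specific mechanisms. For $E_7$, $V(\pi_1)$, the paper does not use a tangent vector $x+tx$; since $\sigma_2(\XX)=\PP(V)$ here, it suffices to exhibit \emph{any} vector of rank $\ge 3$. The paper does this by realizing $V(\ul\pi_1)$ as the degree-$1$ piece of a $\ZZ$-grading of $\mathfrak e_8$ and distinguishing orbits by the dimension of the ambient $E_8$-orbit: rank-$2$ elements land in $E_8$-orbits of dimension $58$, $92$ or $114$, whereas a sum of three root vectors in a $D_4$-configuration has $E_8$-orbit dimension $112$. For $F_4$, $V(\pi_2)$, the paper does follow your tangent-vector idea, but the crucial point is the choice of separating invariant: one embeds $V(\pi_2)\subset\Lambda^2 V(\pi_1)$ and uses the pair $(\dim\Ima\omega,\ \rk\Ima\omega)$, showing that some $\omega_t=x\wedge y+t(x\wedge y)$ attains the value $(4,1)$, which is impossible for a sum of two highest weight vectors (Lemma~\ref{Ls2}).
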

The types of arguments are presented in the following tables.

$$\begin{tabular}{|c|c|c|}\hline Symbol&Argument for being {\bf rs-continuous}& References\\\hline
SM &\begin{tabular}{c} the representation is reduced to\\ a
subminuscule representation\\\end{tabular}&
Section~\ref{Sintro},~\cite{Bucz-Lands-2011}\\\hline F4C & the
representation is equivalent to $V({\pi_1})$ of $F_4$&
Prop.~\ref{Prop Fund-1 F4}, Subsect.~\ref{SSf4t}\\\hline\end{tabular}$$

$$\begin{tabular}{|c|c|c|}\hline Symbol&Argument for being {\bf r-discontinuous}& References\\\hline
CC&\begin{tabular}{c} the representation is chopable to an r-discontinuous\\
representation of some classical group\\\end{tabular}&---\\\hline
Ad& the representation is adjoint&
Section~\ref{Sintro},~\cite{Kaji-SecAdjVar}\\\hline
AdC&\begin{tabular}{c} the representation is chopable to the
adjoint\\ representation of some exceptional
group\\ \end{tabular}&---\\
\hline
F4D & the representation is equivalent to $V({\pi_2})$ of $F_4$& Prop.~\ref{Prop Fund-2 F4}, Subsect.~\ref{SSf4w}\\
\hline
E7D& \begin{tabular}{c} the representation is equivalent to\\ the $E_7$-representation V$(\pi_1)$\\\end{tabular}&Prop.~\ref{LE7}, Subsect.~\ref{SSSE7}\\
\hline\end{tabular}.$$

In the following tables, we provide, for each fundamental
representation of each exceptional group, an argument by which it is
rs-continuous or r-discontinuous.

$$\begin{tabular}{|c|c|c|c|c|c|c|}\hline F. weights of E$_6$&$\pi_1$&$\pi_2$&$\pi_3$&$\pi_4$&$\pi_5$&$\pi_6$\\\hline Arguments&SM&CC&CC&CC&SM&CC or Ad\\\hline\end{tabular}$$
$$\begin{tabular}{|c|c|c|c|c|c|c|c|}\hline F. weights of E$_7$&$\pi_1$&$\pi_2$&$\pi_3$&$\pi_4$&$\pi_5$&$\pi_6$&$\pi_7$\\\hline Arguments&E7D&CC&CC&CC&CC&Ad&CC\\\hline\end{tabular}$$
$$\begin{tabular}{|c|c|c|c|c|c|c|c|c|}\hline F. weights of E$_8$&$\pi_1$&$\pi_2$&$\pi_3$&$\pi_4$&$\pi_5$&$\pi_6$&$\pi_7$&$\pi_8$\\\hline Arguments&Ad&CC&CC&CC&CC&CC&AdC&CC\\\hline\end{tabular}$$
$$\begin{tabular}{|c|c|c|c|c|}\hline F. weights of F$_4$&$\pi_1$&$\pi_2$&$\pi_3$&$\pi_4$\\\hline Arguments&F4C&F4D&CC&Ad\\\hline\end{tabular}$$
$$\begin{tabular}{|c|c|c|}\hline F. weights of G$_2$&$\pi_1$&$\pi_2$\\\hline Arguments&SM&Ad\\\hline\end{tabular}$$
For the representations $V(\pi_k)$ of exceptional groups $E_n (n=6,
7, 8)$, for which argument CC is applicable, chopping of $V$ in the
vertex with number $n-1$ is a 2-discontinuous representation of a classical
group of type $D_{n-1}$. To apply argument AdC one should chop
vertex with number 1. For all representations of the exceptional
group $F_4$, to apply argument CC one can always chop the vertex
with number 1.

Let us first justify arguments SM, CC, Ad, AdC.

SM) It was shown~\cite{Bucz-Lands-2011} that any subminuscule
representation is rs-continuous, i.e. that rank and border rank coincide for
such representations.

CC) According to Proposition~\ref{Predl}, if some chopping $\ul V$
of a representation $V$ is 2-discontinuous, then $V$ is 2-discontinuous.

Ad) According to~\cite{Kaji-SecAdjVar}, all adjoint representations of exceptional groups are 2-discontinuous.

AdC) According to Proposition~\ref{Predl} and Ad), if some chopping
$\ul V$ of a representation $V$ is an adjoint representation of an
exceptional group, then $V$ is 2-discontinuous.


The rest of this section is devoted to the justification of arguments F4C, F4D and E7W.





\subsection{Rs-continuity of $V(\pi_1)$ for $F_4$}\label{SSf4t}

In this subsection we prove the following.

\begin{prop}\label{Prop Fund-1 F4}
The fundamental representation $V(\pi_1)$ of $F_4$ is rs-continuous.
\end{prop}

\begin{proof}
It is known, \cite[p. 59]{Zak-Book}, that the generic rank of
$V(\pi_1)$ is three, so that $$\sigma_3(\XX(\pi_1))=\PP(V(\pi_1)).$$
In Lemmas~\ref{Lf42t} and~\ref{Lf43t} below, we show that $V(\pi_1)$
is 2- and 3-continuous, respectively, which implies that this module is
rs-continuous.
\end{proof}

\begin{lemma}\label{Lf42t}The $F_4$-module $V(\pi_1)$ is 2-continuous.\end{lemma}

\begin{lemma}\label{Lf43t}The $F_4$-module $V(\pi_1)$ is 3-continuous.\end{lemma}

The first fundamental representation $V(\pi_1)$ of $F_4$ is
26-dimensional and is the (nontrivial) representation of the smallest
possible dimension for this group. The discussion which follows
involves several representations of various groups. This would make
the notation $V(\lambda)$ ambiguous. We have chosen to denote the
representations spaces by indices corresponding to their dimension.
The set of highest weight vectors, previously denoted by
$X(\lambda)$, will be denoted by $X(V)$. We let $V_{26}$ denote the
representation space of $(F_4,V(\pi_1))$ and $X(V_{26})$ be the set
of highest weight vectors. To study $V_{26}$ we use the fact that it
can be obtained as a generic hyperplane in the smallest,
27-dimensional representation of $E_6$, which we denote by
$V_{27}=(E_6,V(\pi_1))$. We summarize some known results in the
following lemma.

\begin{lemma}\label{LE_6V1}

{\rm (i)} The algebra of $E_6$-invariant polynomials on $V_{27}$ is
polynomial in one generator of degree 3, i.e.
$\CC[V_{27}]^{E_6}=\CC[DET]$, where $DET\in$S$^3(V_{27}^*)$.

{\rm (ii)} The orbits of $E_6$ in $V_{27}$ are the
following:\begin{center}$0$, $X(V_{27})$, $\{DET=0\}\setminus
\ol{X(V_{27})}$, $\{DET=a\}$, $a\in\FF^\times$;\end{center} their
dimensions are, respectively, $0, 17, 26, 26$. The orbits of
$E_6\times \FF^\times$ in $V_{27}$ are the following (lower indices
indicate dimension):
$$
{\mc O}_0 = \{0\} \;,\quad {\mc O}_{17} = X(V_{27}) \;,\quad {\mc O}_{26} = \{DET=0\}\setminus \ol{X(V_{27})} \;,\quad {\mc O}_{27}=\{DET\ne 0\} \;.
$$

{\rm (iii)} There are exactly three $E_6$ orbits in the projective
space $\PP(V_{27})$ and they are exactly the rank subsets with
respect to $\XX(V_{27})$, namely,
$$
\XX(V_{27})\;, \quad \XX_2(V_{27})=\{DET=0\}\setminus\XX(V_{27})\;, \quad \XX_3(V_{27})=\{DET\ne 0\} \; ;
$$
their dimensions are, respectively, $16, 25, 26$. The secant varieties of $\XX(V_{27})$ are exactly the closures of the $E_6$-orbits in $\PP(V_{27})$.

{\rm (iv)} The stabilizer of any vector $v\in \{DET\ne 0\}$ is isomorphic to $F_4$. The orth-complement $v^\perp\subset V_{27}$ is an irreducible $F_4$-module isomorphic to $V_{26}$, i.e. $V_{27} \cong \langle v \rangle\oplus V_{26}$ as $F_4$-modules.

{\rm (v)} The secant varieties of $\XX(V_{26})$ are obtained as intersections of the secant varieties of $\XX(V_{27})$ with the hyperplane $\PP(V_{26})$, i.e.
$$
\XX(V_{26}) = \PP(V_{26})\cap \XX(V_{27}) \;,\quad \sigma_2(\XX(V_{26})) = \PP(V_{26})\cap \sigma_2(\XX(V_{27})) \;,\quad \sigma_3(\XX(V_{26}))=\PP(V_{26}) \;.
$$
\end{lemma}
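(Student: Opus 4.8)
The plan is to treat parts (i)--(iv) as a digest of classical facts and to supply a genuine argument only for part (v). For (i) and (ii) I would identify $V_{27}$ with the exceptional (Albert) Jordan algebra $\mathfrak h_3(\mathbb O)$, for which $DET$ is the cubic norm form: the pair $(\mathrm{GL}_1\times E_6,V_{27})$ is a regular prehomogeneous vector space whose only basic relative invariant is $DET$, so over $E_6$ the invariant ring is $\CC[DET]$, and the $E_6$-orbits on $V_{27}$ are exactly the loci of Jordan rank $0,1,2,3$ --- that is, $\{0\}$, the cone $X(V_{27})$ over the highest weight orbit, $\{DET=0\}\setminus\overline{X(V_{27})}$, and $\{DET\neq0\}$ --- the last of which splits under $E_6$ itself into the level sets $\{DET=a\}$, $a\in\FF^\times$; the dimensions follow from $\dim\XX(V_{27})=16$. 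For (iii), passing to $\PP(V_{27})$, the three $E_6$-orbits are the projectivisations of the rank $1,2,3$ loci; that these coincide with the rank subsets $\XX(V_{27})$, $\XX_2(V_{27})$, $\XX_3(V_{27})$ and that $\sigma_2(\XX(V_{27}))=\{DET=0\}$, $\sigma_3(\XX(V_{27}))=\PP(V_{27})$ is precisely the statement that the Cayley plane $\XX(V_{27})$ is one of Zak's four Severi varieties (see \cite{Zak-Book}), whose secant variety is the cubic hypersurface cut out by the norm. For (iv), by (ii) the vectors with $DET\neq0$ form a single $E_6$-orbit, and the stabiliser of such a vector --- say the Jordan identity $v$, normalised to $DET(v)=1$ --- is the automorphism group $F_4$ of $\mathfrak h_3(\mathbb O)$; the decomposition $V_{27}=\CC v\oplus V_{26}$ into scalars and the $F_4$-invariant complement $v^\perp$ (the traceless part) realises the branching $\mathbf{27}\!\downarrow\!F_4=\mathbf 1\oplus\mathbf{26}$, with $V_{26}$ irreducible.

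Now (v). First I would prove $\XX(V_{26})=\PP(V_{26})\cap\XX(V_{27})$: the right-hand side is a non-empty closed $F_4$-stable subset of $\PP(V_{26})$, hence contains the unique closed $F_4$-orbit $\XX(V_{26})$; and since $\XX(V_{27})$ is nondegenerate of dimension $16$ and the hyperplane $\PP(V_{26})$ is generic, this intersection is an irreducible hyperplane section of dimension $15=\dim\XX(V_{26})$ (smooth and connected by Bertini and Lefschetz), so the two coincide. Granting this, the inclusions $\sigma_r(\XX(V_{26}))\subseteq\PP(V_{26})\cap\sigma_r(\XX(V_{27}))$ for $r=2,3$ are formal: every span $\PP_{x_1\ldots x_r}$ with $x_i\in\XX(V_{26})$ lies in $\PP(V_{26})$ and in $\sigma_r(\XX(V_{27}))$, and the latter set is closed. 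For the reverse inclusion with $r=3$ I would simply quote that the generic rank of the $F_4$-module $V_{26}$ equals $3$, i.e. $\sigma_3(\XX(V_{26}))=\PP(V_{26})$ (\cite[p.~59]{Zak-Book}); together with $\sigma_3(\XX(V_{27}))=\PP(V_{27})$ this settles $r=3$.

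The main obstacle is the reverse inclusion for $r=2$, namely $\PP(V_{26})\cap\{DET=0\}\subseteq\sigma_2(\XX(V_{26}))$. Since $\sigma_2(\XX(V_{26}))$ is closed and $\PP(V_{26})\cap\{DET=0\}$ is irreducible of dimension $24$ with dense open subset $\PP(V_{26})\cap\XX_2(V_{27})$ (the complement of the $15$-dimensional $\XX(V_{26})$), it suffices to show that a general point $p$ of $\PP(V_{26})\cap\XX_2(V_{27})$ has border rank at most $2$ with respect to $\XX(V_{26})$. Here I would use the secant geometry of the Severi variety $\XX(V_{27})$: through each point of $\sigma_2(\XX(V_{27}))\setminus\XX(V_{27})$ the secant lines sweep out a positive-dimensional entry locus $\Sigma_p\subset\XX(V_{27})$ (an $8$-dimensional quadric, by \cite{Zak-Book}), so the hyperplane $\PP(V_{26})$ meets $\Sigma_p$; for a point $x_1\in\Sigma_p\cap\PP(V_{26})=\Sigma_p\cap\XX(V_{26})$ on a genuine secant line through $p$, the second intersection point $x_2$ of $\APbr{p,x_1}$ with $\XX(V_{27})$ also lies in $\PP(V_{26})$ (the line $\APbr{p,x_1}$ is contained in $V_{26}$), hence in $\XX(V_{26})$, and $p\in\APbr{x_1,x_2}$ with both points on $\XX(V_{26})$. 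The delicate point I expect here is ensuring that $\Sigma_p\cap\PP(V_{26})$ meets the open locus of $\Sigma_p$ swept by honest secant lines, rather than landing in its tangential boundary; this is handled by a dimension count on the incidence variety over $\PP(V_{26})\cap\XX_2(V_{27})$, using that the boundary has codimension one in $\Sigma_p$. Alternatively one can bypass entry loci altogether and compute $\dim\sigma_2(\XX(V_{26}))=24$ directly via Terracini's lemma, concluding by irreducibility. Combining the two inclusions yields (v), and with it the lemma.
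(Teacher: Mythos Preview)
Your treatment of (i)--(iv) matches the paper's exactly: the paper also gives no argument and simply cites \cite{Kac-1980}, \cite{Zak-Book}, and \cite{Bucz-Lands-2011} for the respective parts. For (v) the paper does even less than you do --- it merely says ``Parts (iv) and (v) are also quoted from \cite[p.~59--60]{Zak-Book}'' --- whereas you supply a genuine proof via the entry-locus geometry of the Severi variety $\XX(V_{27})$. So your route is not different so much as strictly more detailed.

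Your argument for (v) is correct, and in fact the ``delicate point'' you flag dissolves completely. With $\Sigma_p\subset L_p\cong\PP^9$ the $8$-dimensional entry quadric and $H:=L_p\cap\PP(V_{26})$, the tangential locus of lines through $p$ is $\Sigma_p\cap H_p$, where $H_p$ is the polar hyperplane of $p$ with respect to $\Sigma_p$. Since $p\in\PP(V_{26})$ we have $p\in H$, while $p\notin\Sigma_p$ gives $p\notin H_p$; hence $H\neq H_p$ and $\Sigma_p\cap H\not\subset\Sigma_p\cap H_p$. So for \emph{every} $p\in\PP(V_{26})\cap\XX_2(V_{27})$ (not merely a general one) there is $x_1\in\Sigma_p\cap\XX(V_{26})$ with $\langle p,x_1\rangle$ a genuine secant, and the second point $x_2$ automatically lies in $\PP(V_{26})\cap\XX(V_{27})=\XX(V_{26})$. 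No incidence-variety dimension count or Terracini computation is needed. For the first equality $\XX(V_{26})=\PP(V_{26})\cap\XX(V_{27})$, you can also avoid invoking Bertini on a possibly non-generic hyperplane: every irreducible component of the right-hand side is $F_4$-stable (as $F_4$ is connected), closed and non-empty in $\PP(V_{26})$, hence contains $\XX(V_{26})$; but each component has dimension $\le 15=\dim\XX(V_{26})$, so it equals $\XX(V_{26})$.
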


\begin{proof}
Since the results are known, but are a compilation of the work of
many authors, we confine ourselves to giving references (not
necessarily the original ones) for the various parts of the lemma.
Part (i) can be found in Table II in \cite{Kac-1980}. As for part
(ii), the fact that $$\{DET=a\}, a\ne 0$$ is a single $E_6$-orbit,
is proven in~\cite[Proposition 1.1]{Kac-1980}, while the enumeration
of the orbits in the nullcone $\{DET=0\}$ is given in \cite[p.~59]{Zak-Book}. Part (iii) can also be deduced from the discussion on
p.~59 of \cite{Zak-Book} or can be seen to follow directly from the
fact that $V_{27}$ is a subcominuscule representation and for such
representations the rank sets are exactly the group orbits in the
projective space, cf. \cite[\S 4]{Bucz-Lands-2011}. Parts (iv) and
(v) are also quoted from \cite[p.~59-60]{Zak-Book}.
\end{proof}

The above proposition and, specifically, parts (iv) and (v) allow us
to practically forget about the group $F_4$ and use only properties
of $V_{27}$ and a generic hyperplane inside it. We shall need to
understand the structure of $V_{27}$ with respect to a subgroup of
$E_6$ of type $D_5$. Let $H\subset E_6$ be the regular subgroup
whose root system is generated by the set of simple roots
$S:=\{\alpha_2,\alpha_3,\alpha_4,\alpha_5,\alpha_6\}$ (we use the
numbering of simple roots as in \cite[p. 292]{VO}). It turns our
that $H\cong Spin_{10}$.

\begin{lemma}
The decomposition, as an $H$-module, of the simple $E_6$-module $V_{27}$ is
$$
V_{27} \cong V_1 \oplus RSpin_{10} \oplus V_{10} \;,
$$
where $V_1$ is a one-dimensional trivial $H$-module, $RSpin_{10}$ is
the spinor $H$-module, and $V_{10}$ is the natural representation of
$SO_{10}$ (recall that $Spin_{10}$ is a cover of $SO_{10}$).
\end{lemma}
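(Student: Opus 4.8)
The plan is to compare the weights of $V_{27}$ regarded as an $E_6$-module and as an $H$-module. Since $E_6$ and $H$ share the fixed Cartan subgroup, each $E_6$-weight is literally an $H$-weight, so it suffices to reorganize the $27$ weights of $V_{27}$ into $H$-submodules. The structural input is that $V_{27}=V(\pi_1)$ is minuscule: its set of weights is a single $\mc W(E_6)$-orbit through $\pi_1$, of size $27=|\mc W(E_6)|/|\mc W(D_5)|$, and every weight occurs with multiplicity one. First I would grade $V_{27}$ by the $\alpha_1$-coefficient: write a weight $\mu$ uniquely as $\mu=\pi_1-\sum_{i=1}^{6}c_i(\mu)\alpha_i$ with $c_i(\mu)\in\ZZ_{\ge0}$, and put $V^{(k)}=\bigoplus_{c_1(\mu)=k}V(\pi_1)_\mu$. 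Every root of $H$ lies in $\APbr{\alpha_2,\dots,\alpha_6}$, so $H$ preserves $c_1$; hence each $V^{(k)}$ is an $H$-submodule and $V_{27}=\bigoplus_k V^{(k)}$ as $H$-modules. Every positive root of $E_6$ has $\alpha_1$-coefficient $0$ or $1$ (as $\alpha_1$ is a cominuscule node), and $\alpha_1$ occurs with coefficient $2$ in $\pi_1-w_0\pi_1$; hence $c_1$ takes only the values $0,1,2$ on $V_{27}$.

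Next I would identify the three graded pieces. Using that $V_{27}$ is spanned by monomials $f_{\beta_1}\cdots f_{\beta_m}v^{\pi_1}$ with $\beta_j\in\Delta^+(E_6)$, each of which lands in $V^{(k)}$ with $k=\#\{j:c_1(\beta_j)=1\}$, one obtains $V^{(0)}=U(\ul{\mk{n}}^-)v^{\pi_1}$, the irreducible module of the Levi $\ul{\mk{g}}$ with highest weight $\pi_1|_{\ul{\mk{h}}}$; since $\langle\pi_1,\alpha_i^\vee\rangle=0$ for $i\ge 2$, this is the trivial one-dimensional $H$-module $V_1$. For the top layers I would invoke the well-known $\ZZ$-grading $\mk{e}_6=\mk{g}_{-1}\oplus\mk{g}_0\oplus\mk{g}_1$ attached to the cominuscule node $1$, in which $\mk{g}_0$ is $\mk{so}_{10}$ together with a one-dimensional centre and $\mk{g}_{\pm1}$ are $16$-dimensional half-spin $\mk{so}_{10}$-modules. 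Then $V^{(1)}=\mk{g}_{-1}\cdot V^{(0)}$ is an $\ul{\mk{g}}$-stable nonzero quotient of $\mk{g}_{-1}\otimes\APbr{v^{\pi_1}}\cong\mk{g}_{-1}$, hence $V^{(1)}\cong RSpin_{10}$ of dimension $16$. Finally $V^{(2)}=\mk{g}_{-1}\cdot V^{(1)}$ is a quotient of $\mk{g}_{-1}\otimes V^{(1)}$, a tensor square of a half-spin $\mk{so}_{10}$-module; since $\dim V^{(2)}=27-1-16=10$ and the natural module is the only $10$-dimensional semisimple quotient of such a tensor square (the same-chirality square decomposes as $V_{10}\oplus\Lambda^3\oplus\Lambda^5_+$, of dimensions $10+120+126$), we get $V^{(2)}\cong V_{10}$. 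Assembling, $V_{27}\cong V_1\oplus RSpin_{10}\oplus V_{10}$.

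The part that needs the most care is the claim that each graded piece $V^{(k)}$ is $H$-irreducible of the stated type; in the argument above everything is reduced to two standard facts, namely that $\alpha_1$ is cominuscule in $\mk{e}_6$ (so that the $1$-grading with $\mk{g}_0=\mk{so}_{10}\oplus\FF$ and $\mk{g}_{\pm1}$ half-spin exists) and the short list of small irreducible $\mk{so}_{10}$-modules. If one wishes to avoid invoking the grading of $\mk{e}_6$, one can instead write out the $27$ weights explicitly and verify directly that $\pi_1-\alpha_1$ and $w_0\pi_1$ are the unique $D_5$-dominant weights in degrees $1$ and $2$, so that $V^{(1)}$ and $V^{(2)}$ are irreducible with highest weights the spinor and vector fundamental weights of $\mk{so}_{10}$; or one may simply quote the classical branching rule for the restriction $E_6\downarrow D_5$.
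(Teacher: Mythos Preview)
Your argument is correct and is, at bottom, the same weight-based verification the paper invokes in one line (``straightforward consideration of the weights\ldots using the fact that $H$ is a regular subgroup''). What you add is a clean organizing principle: grading $V_{27}$ by the $\alpha_1$-coefficient and using the cominuscule $\ZZ$-grading $\mk e_6=\mk g_{-1}\oplus\mk g_0\oplus\mk g_1$ to identify the pieces as successive images $\mk g_{-1}^k\cdot v^{\pi_1}$. This replaces an explicit tabulation of $27$ weights by three structural facts (triviality of $\pi_1$ on $D_5$, irreducibility of $\mk g_{-1}$ as a half-spin module, and the constituents of the same-chirality spinor square for $\mk{so}_{10}$), which is arguably more illuminating though it requires citing those facts. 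Either route suffices; the paper simply leaves the computation to the reader, while you carry it out conceptually.
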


\begin{proof}
The result is obtained by a straightforward consideration of the weights of the modules involved, using the fact that $H$ is a regular subgroup of $E_6$.
\end{proof}

\begin{lemma}\label{Lp26}
The nonzero isotropic vectors in $V_{10}$ belong to $X(V_{27})$ and have rank 1 as elements of the $E_6$-module $V_{27}$. The non-isotropic vectors in $V_{10}$ belong to ${\mc O}_{26}$ and have rank 2 as elements of the $E_6$-module $V_{27}$.
\end{lemma}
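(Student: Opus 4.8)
The plan is to compute the $E_6$-invariant cubic $DET$ on the subspace $V_{10}\subset V_{27}$ and then feed the result into the orbit structure recorded in Lemma~\ref{LE_6V1}, using that $V_{27}$ is minuscule. Write $Q$ for the $SO_{10}$-invariant quadratic form on $V_{10}$, so that ``isotropic'' means $Q=0$. First I would show that $DET$ vanishes identically on $V_{10}$. The group $H\cong Spin_{10}$ acts on $V_{10}$ through the natural surjection $Spin_{10}\to SO_{10}$, $V_{10}$ being the vector representation; since $DET\in{\rm S}^3(V_{27}^*)$ is $E_6$-invariant, its restriction to $V_{10}$ is an $SO_{10}$-invariant cubic polynomial. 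By classical invariant theory the algebra of $SO_{10}$-invariants on the vector representation is generated by $Q$, hence has no nonzero element of odd degree, so $DET|_{V_{10}}\equiv 0$. (Equivalently: decomposing ${\rm S}^3(V_{27}^*)$ into $H$-submodules ${\rm S}^a(V_1^*)\otimes{\rm S}^b(RSpin_{10}^*)\otimes{\rm S}^c(V_{10}^*)$ with $a+b+c=3$, the ``$c=3$'' component of $DET$ lies in ${\rm S}^3(V_{10}^*)^{SO_{10}}=0$.) By Lemma~\ref{LE_6V1}(ii) we have $\{DET=0\}=\{0\}\sqcup X(V_{27})\sqcup{\mc O}_{26}$, hence $V_{10}\setminus\{0\}\subset X(V_{27})\sqcup{\mc O}_{26}$; in particular, by Lemma~\ref{LE_6V1}(iii), every nonzero vector in $V_{10}$ has rank $1$ or $2$.

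Next I would pin down the rank on the isotropic vectors. Rank depends only on the line spanned by a vector, so on $V_{10}\setminus\{0\}$ it is invariant under $Spin_{10}\times\FF^\times$ (homotheties commute with $E_6$ and preserve $V_{10}$), and this group has exactly two orbits there, the nonzero isotropic vectors and the non-isotropic ones, because $SO_{10}$ has the two orbits $\{Q=0\}$ and its complement on $\PP(V_{10})$. It therefore suffices to evaluate the rank at one vector of each type. Since $H$ is a regular subgroup of $E_6$ it contains a maximal torus $T$ of $E_6$, so the highest weight vector $v$ of the $H$-module $V_{10}$ is a $T$-weight vector of $V_{27}$; as $V_{27}=V(\pi_1)$ is a minuscule $E_6$-module, all of its weight lines are $T$-fixed points of the closed orbit $\XX(V_{27})$, whence $[v]\in\XX(V_{27})$ and $v$ has rank $1$. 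Moreover $v$ is isotropic, because the invariant bilinear form pairs the weight space of weight $\mu$ (the highest weight of $V_{10}$, which is nonzero) trivially with itself, $\mu\ne-\mu$. Thus all nonzero isotropic vectors of $V_{10}$ lie in $X(V_{27})$ and have rank $1$, proving the first assertion.

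Finally I would treat the non-isotropic vectors. By Step~1 such a vector has rank $1$ or $2$, and I must exclude rank $1$. If one non-isotropic vector had rank $1$, the orbit argument of the previous paragraph forces all of them to, so $V_{10}\setminus\{0\}\subset X(V_{27})$, i.e.\ $\PP(V_{10})=\PP^9\subset\XX(V_{27})$. This is impossible: $\XX(V_{27})$ is the Cayley plane, a $16$-dimensional Severi variety in $\PP^{26}$, whose maximal linear subspaces have dimension $5$ (see~\cite{Zak-Book}; in fact $\XX(V_{27})\cap\PP(V_{10})$ is the $8$-dimensional quadric $\{Q=0\}$, an entry locus of this Severi variety). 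Hence the non-isotropic vectors of $V_{10}$ have rank $2$, i.e.\ belong to ${\mc O}_{26}$, which completes the proof.

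The main obstacle is the last step: ruling out that the whole $9$-dimensional projective space $\PP(V_{10})$ lies inside the $16$-dimensional closed orbit. The first two steps are routine invariant theory and weight bookkeeping, but excluding a $\PP^9\subset\XX(V_{27})$ needs genuine input about linear subspaces of the Cayley plane. If one prefers not to quote the classification of linear spaces on Severi varieties, the alternative is to exhibit an explicit rank-$2$ element of $V_{10}$ — for instance the sum $v+v'$ of the highest and lowest weight vectors of the $H$-module $V_{10}$, which is non-isotropic since $Q(v+v',v+v')=2Q(v,v')\ne0$ — and check directly that it fails to satisfy the quadratic equations cutting out $\XX(V_{27})$, after restricting those equations to $V_{10}$.
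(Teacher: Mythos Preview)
Your proof is correct. The isotropic case (your Step~2) is essentially the paper's argument: both observe that a weight vector of the minuscule module $V_{27}$ lies in $X(V_{27})$, pick one lying in $V_{10}$, note it is isotropic, and propagate by $SO_{10}$-transitivity on isotropic lines. Your Step~1, showing $DET|_{V_{10}}\equiv 0$ by invariant theory, is a tidy preliminary reduction the paper does not make explicitly; the paper instead lands each orbit directly in $\mathcal O_{17}$ or $\mathcal O_{26}$.

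The non-isotropic case is where the approaches diverge. The paper exhibits a concrete rank-$2$ element: it singles out two weights of $V_{10}$, namely $\varepsilon_2-\varepsilon$ and $-\varepsilon_1-\varepsilon_2$, observes that they are the highest and lowest weights of $V_{27}$ with respect to a \emph{different} system of simple roots $\Pi'$ of $E_6$, and then invokes \cite[Ch.~III, Thm~1.4]{Zak-Book} to conclude that the sum of the corresponding weight vectors lies in $\mathcal O_{26}$. Your main argument is instead global and geometric: if some non-isotropic vector had rank~$1$ then all of $\PP(V_{10})\cong\PP^9$ would sit inside the Cayley plane, contradicting the fact that its maximal linear subspaces are $\PP^5$'s. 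This is valid, but it imports a nontrivial structural fact about the Severi variety, whereas the paper's change-of-Borel trick stays entirely within weight combinatorics already set up. Your proposed ``alternative'' (take $v+v'$ and test the quadratic equations) is exactly the paper's strategy in spirit; the paper's device of recognising $v,v'$ as extreme weight vectors for a new Borel is simply the clean way to execute it without computing anything.

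One small caution: the parenthetical remark that $\XX(V_{27})\cap\PP(V_{10})$ equals the quadric $\{Q=0\}$ is precisely the content of the lemma, so be sure it is read as illustration rather than argument.
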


\begin{proof}
Since $H$ is a regular subgroup of $E_6$, the weight spaces for
$E_6$ in $V_{27}$ are also weight spaces for $H$. Thus $V_{10}$ is a
span of some of these weight spaces. The $E_6$-weights of $V_{27}$
are
$$
\varepsilon_i\pm\varepsilon,-\varepsilon_i-\varepsilon_j \; (i\ne j).
$$
The weights appearing in $V_{10}$ are
$$
\varepsilon_i-\varepsilon, -\varepsilon_1-\varepsilon_i \; (i\ne1).
$$
The weight $-\varepsilon_6-\varepsilon$ is the lowest weight of $V_{27}$ and thus any element of the corresponding weight space belongs to $\mathcal O_{17}$. On the other hand, any element of the weight space of weight $-\varepsilon_6-\varepsilon$ is isotropic. Since all isotropic vectors of $V_{10}$ are conjugate by $SO_{10}$, all isotropic vectors of $V_{10}$ belong to $\mathcal O_{17}$.

It remains to show that all non-isotropic vectors of $V_{10}$ (they are all $SO_{10}\times\mathbb F^\times$-conjugate) belong to $\mathcal O_{26}$. To this end, we note that the weights of $V_{10}$
$$
\varepsilon_2-\varepsilon,-\varepsilon_1-\varepsilon_2
$$
are, respectively, the highest and the lowest weight of $V_{27}$ with respect to the set of simple roots of $E_6$
\begin{center}
$\Pi'=\{\varepsilon_2-\varepsilon_1,
\varepsilon_1+\varepsilon_4+\varepsilon_5-\varepsilon,
\varepsilon_6-\varepsilon_4, \varepsilon_4-\varepsilon_5,
-\varepsilon_4-\varepsilon_3-\varepsilon_6+\varepsilon,
\varepsilon_3-\varepsilon_6\}.$
\end{center}
Thus $v^{\varepsilon_2-\varepsilon}+v^{-\varepsilon_1-\varepsilon_2}
\in \mathcal O_{26}$~\cite[Ch. III, Thm 1.4]{Zak-Book}. Since all
non-isotropic vectors of $V_{10}$ are $SO_{10}\times\mathbb
F^\times$-conjugate, all non-isotropic vectors of $V_{10}$ belong to
$\mathcal O_{26}$.
\end{proof}

\begin{proof}[Proof of Lemma~\ref{Lf42t}]
According to Lemma \ref{LE_6V1}, to prove that $V_{26}$ is 2-continuous it
suffices to show that for any $x\in V_{26}\cap\mathcal O_{26}$ there
exist $x_+, x_-\in V_{26}\cap\mathcal O_{17}$ such that $x=x_++x_-$.
We fix $x\in V_{26}\cap\mathcal O_{26}$. First, we note that there
exists a Borel subalgebra $\frak b\subset E_6$ with a Cartan
subalgebra $\frak h\subset\frak b$ such that $x\in V_{10}$ (we use
the notation of Lemma~\ref{Lp26}). Then $x\in V_{10}\cap \mathcal
O_{26}$ and thus $x$ is a non-isotropic vector of $V_{10}$ by
Lemma~\ref{Lp26}. Let $x^\bot$ be the orthogonal complement to $x$
in $V_{10}$. Note that a nondegenerate symmetric bilinear form
$(\cdot, \cdot)$ of $V_{10}$ is still nondegenerate after
restriction to $x^\bot$.

Since $V_{26}$ is a 26 dimensional subspace of the 27 dimensional space $V_{27}$, we have
$$
\dim(V_{26}\cap x^\bot)\ge\dim(x^\bot)-1=9.
$$
As $9>\frac12\dim V_{10}$, the restriction of $(\cdot, \cdot)$ to $x^\bot\cap V_{26}$ is non-zero. Hence there exists $y\in x^\bot\cap V_{26}$ such that $(y,y)=\frac{-(x, x)}4$. Set
\begin{center}
$x_+=\frac x2+y,\hspace{10pt}x_-=\frac x2-y$.
\end{center}
We have
\begin{center}
$(x_+, x_+)=\frac14(x, x)+(y, y)=0=(x_-, x_-)$ and $x_++x_-=x$,
\end{center}
i.e. the vectors $x_\pm$ are isotropic and their sum is equal to $x$. Thus $x_\pm\in\mathcal O_{17}$. Since $y\in V_{26}$, we have $x_\pm\in V_{26}$. Hence $x_\pm\in X(V_{26})$.
Therefore $V_{26}$ is 2-continuous.
\end{proof}

Before proceeding with the proof of Lemma \ref{Lf43t}, we need the following auxiliary result.

\begin{lemma}\label{Lrk3}
Let $V$ be a finite-dimensional vector space. Let $Z_f$ be a hypersurface determined as the zero-locus of a non-zero homogeneous polynomial $f\in\mathbb F[V]$ and let $X\in V$ be a conical subset spanning $V$. Then, $V=Z_f+X$, i.e. for every $v\in V$ there exist $v_2\in Z_f$ and $x\in X$ such that $v=v_2+x$.
\end{lemma}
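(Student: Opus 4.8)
The plan is to reduce the statement to a dimension count on a generic line through the given vector $v$. Fix $v\in V$; we want $x\in X$ with $v-x\in Z_f$, i.e. $f(v-x)=0$. Since $X$ spans $V$, it is in particular not contained in the hypersurface $Z_f$ (as $f\neq 0$), so we may pick $x_0\in X$ with $f(x_0)\neq 0$. Consider the affine line $L=\{v-tx_0 : t\in\FF\}$ through $v$. Restricting $f$ to this line gives a polynomial $p(t):=f(v-tx_0)$ in one variable $t$.

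The key step is to show $p$ is not the zero polynomial, so that it has a root $t_0\in\FF$ (recall $\FF$ is algebraically closed). Indeed, $p$ is a polynomial of degree at most $d=\deg f$ in $t$, and its leading behaviour is governed by $x_0$: writing $f$ as a sum of monomials, the coefficient of $t^d$ in $f(v-tx_0)$ is exactly $(-1)^d f(x_0)\neq 0$. Hence $\deg p = d\geq 1$ and $p\not\equiv 0$. (If $d=0$ then $f$ is a nonzero constant and $Z_f=\emptyset$, but then $f$ would have no zeros at all; however a nonconstant homogeneous polynomial always vanishes at $0$, so implicitly $d\geq 1$ here. One may also simply note $Z_f\neq\emptyset$ since $0\in Z_f$ when $d\geq1$.) Therefore there exists $t_0\in\FF$ with $p(t_0)=f(v-t_0x_0)=0$, i.e. $v-t_0x_0\in Z_f$. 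But $X$ is a cone (it is the affine cone over a projective variety, hence closed under scaling, so in particular $t_0 x_0\in X$ once we know $x_0\in X$ — more precisely $X\cup\{0\}$ is a cone and $t_0x_0\in X\cup\{0\}$; if $t_0=0$ then $v=v-0\in Z_f$ and we are already done). Setting $x:=t_0x_0$ and $v_2:=v-t_0x_0$ gives $v=v_2+x$ with $v_2\in Z_f$ and $x\in X$.

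The one subtlety — and the only place a little care is needed — is whether $X$ is genuinely closed under scaling, so that $t_0x_0\in X$ rather than merely $t_0x_0\in\FF^\times X$. In the application $X$ is the affine cone (minus $0$) over a projective variety, so it is stable under multiplication by nonzero scalars, which is exactly what is used; the case $t_0=0$ is handled separately as noted. Thus the statement holds as stated, and the argument is just: a nonconstant one-variable polynomial over an algebraically closed field has a root, applied to $t\mapsto f(v-tx_0)$ for a well-chosen $x_0\in X$ with $f(x_0)\neq0$. I expect no real obstacle here; the main thing to get right is the normalization that the top-degree coefficient of $f(v-tx_0)$ in $t$ is a nonzero multiple of $f(x_0)$, which pins down that the polynomial is nonzero.
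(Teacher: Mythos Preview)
Your proof has a genuine gap. The step ``since $X$ spans $V$, it is in particular not contained in the hypersurface $Z_f$, so we may pick $x_0\in X$ with $f(x_0)\neq 0$'' is false: a spanning set, even a cone, can lie entirely inside a hypersurface (e.g.\ $V=\FF^2$, $f(x,y)=xy$, $X=\FF e_1\cup\FF e_2$). Worse, in the paper's actual application of this lemma (the proof of Lemma~\ref{Lf43t}) one takes $X=X(\pi_1)$ and $Z_f=\sigma_2(X(\pi_1))$, and since $X\subset\sigma_2(X)$ always holds, $f=DET$ vanishes identically on $X$ there. So your leading-coefficient argument for the nonconstancy of $p(t)=f(v-tx_0)$ collapses exactly in the case of interest, and merely knowing $p(0)=f(v)\neq 0$ tells you $p$ is nonzero, not that it is nonconstant and hence has a root.

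The paper circumvents this by using all directions in $X$ at once rather than a single well-chosen $x_0$. Arguing by contradiction, if $f(v+tx)\neq 0$ for every $x\in X$ and every $t\in\FF$, then each one-variable polynomial $t\mapsto f(v+tx)$ is a nonzero constant, so its $t$-derivative at $t=0$ vanishes; this says $df|_v(x)=0$ for all $x\in X$, and since $X$ spans $V$ one gets $df|_v=0$. Homogeneity of $f$ then forces $f(v)=0$ (the locus $\{df=0\}$ is a cone through the origin, and $f$ is constant along each of its irreducible components, hence zero), contradicting the case $t=0$ of the assumption. Your observation that an implicit cone hypothesis on $X$ is being used is correct and applies equally to the paper's argument.
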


\begin{proof}
Assume on the contrary, that there exists $v\in V$ such that $v\ne v_2+x$ for any $v_2\in Z_f$ and $x\in X$. Then $f(v+tx)\ne 0$ for any $x\in X$ and any $t\in\mathbb F$. The function $f(v+tx)$ is polynomial and thus $f(v+tx)\ne0$ for any $t\in\mathbb F$ if and only if $f(v+tx)$ is a non-zero constant as a polynomial of $t$. Hence the first derivative of $f(v+tx)$ with respect to $t$ is zero for $t=0$, i.e. the value of $df$ in the direction $x$ at the point $v$ is zero. As $X$ spans $V$, $df=0$ at $v$.

We claim that $f(w)=0$ for all points $w\in V$ such that $df=0$ at
$w$ (equivalent to: all partial derivatives of $f$ vanish at $w$).
Indeed, the set of equations $df=0$ determines some subvariety
$Z_{df}$ of $V$ and it suffices to show that $f=0$ at any smooth
point of any irreducible component of $Z_{df}$. Obviously $df=0$ on
the smooth locus of any irreducible component of $Z_{df}$. Thus $f$
is constant on any irreducible component of $Z_{df}$. Since $f$ is
homogeneous, $f(0)=0$, and any irreducible component of $Z_{df}$
contains 0. Thus $f|_{V_{df}}=0$.

Compiling the previous two paragraphs, we obtain $f(v)=0$. Thus $v=v+0$, where $v\in Z_f$ and $0\in X$. This completes the proof.
\end{proof}

\begin{proof}[Proof of Lemma~\ref{Lf43t}]
The secant variety $\sigma_2(X(V_{26}))$ is the zero-locus of some
homogeneous function $DET$ of degree 3 and $X(\pi_1)$ spans
$V_{26}$. Thus, according to Lemma~\ref{Lrk3}, any vector $$x\in
V_{26}\backslash\sigma_2(X(V_{26}))$$ may be represented as
$x=x_1+x_2$, for some $x_1\in X(\pi_1)$ and
$x_2\in\sigma_2(X(V_{26}))$. Therefore, by Lemma~\ref{Lf42t}, any
vector in $V_{26}\backslash\sigma_2(X(V_{26}))$ has rank 3.
\end{proof}

\subsection{R-discontinuity of $V(\pi_2)$ for $F_4$}\label{SSf4w}

The goal of this section is to prove the following.

\begin{prop}\label{Prop Fund-2 F4}
The fundamental representation $V(\pi_2)$ of $F_4$ is 2-discontinuous.
\end{prop}
We deduce this proposition from the following three lemmas (we use the description of the corresponding roots and weights given in~\cite[p. 294--295]{VO}). In particular, the highest root of ${\mk f}_4$ coincides with the fundamental weight $\pi_4$. In the rest of this section we use our standard notation applied to the representation $(F_4, V(\pi_2))$.

\begin{lemma}\label{L14-1}The $F_4$-orbit of $[g^{-\pi_4}v^{\pi_2}]$ is open in $T\XX$, where by $g^{-\pi_4}$ we denote a nonzero root vector of $\frak f_4$ with root $-\pi_4$.\end{lemma}

\begin{lemma}\label{L14-2} If all elements of $T\XX$ have rank two or less, then the $F_4$-orbit of $[v^{\pi_2}+v^{-\pi_2+\alpha_2}]$ is open in $T\XX$.\end{lemma}

\begin{lemma}\label{L14-3} The vectors $g^{-\pi_4}v^{\pi_2}$ and $v^{\pi_2}+v^{-\pi_2+\alpha_2}$ belong to different $F_4$-orbits.\end{lemma}

We present the proofs of Lemmas~\ref{L14-1} and~\ref{L14-2} consecutively.
\begin{proof}[Proof of Lemma~\ref{L14-1}] It suffices to show that the orbit $P_{\pi_2}g^{-\varepsilon_1-\varepsilon_2}$ is open in the quotient $\frak f_4/\frak p_{\pi_2}$, where $P_{\pi_2}$ denotes the stabilizer in $F_4$ of $v^{\pi_2}$ and $\frak p_{\pi_2}$ is the Lie algebra of $P_{\pi_2}$. This statement follows from the fact that $$[\frak p_{\pi_2}, g^{-\varepsilon_1-\varepsilon_2}]+\frak p_{\pi_2}=\frak f_4.$$\end{proof}

\begin{proof}[Proof of Lemma~\ref{L14-2}] First note that, by Proposition~\ref{Pl96}, we have $\dim\sigma_2(\XX)=2\dim \XX+1$ and $\dim T\XX=2\dim\XX$. Assume that all points in $T\XX$ have rank two or less. This means that $T\XX$ is contained in $\XX_2\sqcup\XX$, which, by definition, is the image of $(X\times X)_0\times\PP^1$, where $(X\times X)_0$ is the complement of the diagonal in $X\times X$. Then the preimage of $T\XX$ has to be an $F_4$-stable divisor $D'$ of $(X\times X)_0\times\PP^1$. It follows from Lemma~\ref{Lzak?} that $D'$ has to be a product of a divisor $D\subset (X\times X)_0$ and $\PP^1$. Using the fact that $\pi_2=-w_0\pi_2$ we see that there is only one $F_4$-stable divisor on $\XX\times\XX=F_4/P_{\pi_2}\times F_4/P_{\pi_2}$. This divisor has an open $F_4$-orbit and the image of this $F_4$-orbit in $\PP(\pi_2)$ equals $F_4[v^{\pi_2}+v^{w_0s_{\alpha_2}(\pi_2)}]$, where $s_{\alpha_2}$ denotes the reflection with respect to the root $\alpha_2$. It remains notice that $s_{\alpha_2}\pi_2=\pi_
2-\alpha_2$ and that $w_0=-1$ for $F_4$.\end{proof}

\begin{proof}[Proof of Lemma~\ref{L14-3}]The proof is based on the following two facts. First, the $F_4$-module $V(\pi_1)$ has an invariant non-degenerate symmetric bilinear form $(\cdot, \cdot)$ and thus $F_4\subset\SO(V(\pi_1))$. Second, the decomposition of $\Lambda^2V(\pi_1)$ as an $F_4$-module is
$$
\Lambda^2 V(\pi_1) \cong V(\pi_2)\oplus V(\pi_4),
$$
see~\cite[Table 5 on p. 305]{VO}. This allows us to represent the elements of $V(\pi_2)$ as anti-symmetric tensors and perform calculations. Essentially, we will show that $$v^{\pi_2}+v^{-\pi_2+\alpha_2}\notin\SO(V(\pi_1))(g^{-\pi_4}v^{\pi_2}).$$

From now on, we consider $V(\pi_2)$ as a subspace of $\Lambda^2V(\pi_1)$. To any $v\in \Lambda^2V(\pi_1)$ we assign $\Supp v$ as in Subsection~\ref{SSsp}. It is clear that, if $v_1, v_2\in V(\pi_2)$ are $\SO(V(\pi_1))$-conjugate, then the spaces $\Supp v_1$ and $\Supp v_2$ must be $\SO(V(\pi_1))$-conjugate and in particular $(\cdot, \cdot)$ restricted to $\Supp v_1$ and $\Supp v_2$ must have the same rank.

We have the following $\wedge$-decompositions for the vectors of $V(\pi_2)\subset\Lambda^2V(\pi_1)$
$$
v^{\pi_2}=v^{\varepsilon_1}\wedge v^{\frac{\varepsilon_1+\varepsilon_2+\varepsilon_3+\varepsilon_4}2},\hspace{10pt} v^{-\pi_2+\varepsilon_2}=v^{-\varepsilon_1}\wedge v^{\frac{-\varepsilon_1-\varepsilon_2-\varepsilon_3+\varepsilon_4}2}\;. \label{Ehwp2}
$$
We calculate
$$
v^{\pi_2}+v^{-\pi_2+\varepsilon_2}=v^{\varepsilon_1}\wedge v^{\frac{\varepsilon_1+\varepsilon_2+\varepsilon_3+\varepsilon_4}2}+v^{-\varepsilon_1}\wedge v^{\frac{-\varepsilon_1-\varepsilon_2-\varepsilon_3+\varepsilon_4}2},
$$
$$
\Supp (v^{\varepsilon_1}\wedge v^{\frac{\varepsilon_1+\varepsilon_2+\varepsilon_3+\varepsilon_4}2}+v^{-\varepsilon_1}\wedge v^{\frac{-\varepsilon_1-\varepsilon_2-\varepsilon_3+\varepsilon_4}2})=\langle v^{\varepsilon_1}, v^{\frac{\varepsilon_1+\varepsilon_2+\varepsilon_3+\varepsilon_4}2}, v^{-\varepsilon_1}, v^{\frac{-\varepsilon_1-\varepsilon_2-\varepsilon_3+\varepsilon_4}2}\rangle
$$
and
$$
g^{-\pi_4}v^{\pi_2}=g^{-\varepsilon_1-\varepsilon_2}(v^{\varepsilon_1}\wedge v^{\frac{\varepsilon_1+\varepsilon_2+\varepsilon_3+\varepsilon_4}2})=v^{-\varepsilon_2}\wedge v^{\frac{\varepsilon_1+\varepsilon_2+\varepsilon_3+\varepsilon_4}2}+v^{\varepsilon_1}\wedge v^{\frac{-\varepsilon_1-\varepsilon_2+\varepsilon_3+\varepsilon_4}2},
$$
$$
\Supp (v^{-\varepsilon_2}\wedge v^{\frac{\varepsilon_1+\varepsilon_2+\varepsilon_3+\varepsilon_4}2}+v^{\varepsilon_1}\wedge v^{\frac{-\varepsilon_1-\varepsilon_2+\varepsilon_3+\varepsilon_4}2}) =\langle v^{-\varepsilon_2}, v^{\frac{\varepsilon_1+\varepsilon_2+\varepsilon_3+\varepsilon_4}2}, v^{\varepsilon_1}, v^{\frac{-\varepsilon_1-\varepsilon_2+\varepsilon_3+\varepsilon_4}2}\rangle
$$
This implies that the space $\Supp (g^{-\pi_4}v^{\pi_2})$ is $(\cdot, \cdot)$-isotropic, while $\Supp (v^{\pi_2}+v^{-\pi_2+\varepsilon_2})$ is not. From this we conclude that $g^{-\pi_4}v^{\pi_2}$ and $v^{\pi_2}+v^{-\pi_2+\alpha_2}$ belong to different $\SO(V(\pi_1))$-orbits and thus to different $F_4$-orbits.
\end{proof}

\subsection{R-discontinuity of $V(\pi_1)$ for $E_7$}\label{SSSE7}

This subsection is devoted to the proof of the following proposition.

\begin{prop}\label{LE7}The representation $V(\pi_1)$ of $E_7$ is 2-discontinuous.\end{prop}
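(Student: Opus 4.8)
\medskip

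The plan is to imitate, as closely as possible, the strategy used for $V(\pi_2)$ of $F_4$ in Subsection~\ref{SSf4w}: produce a concrete vector of the form $x+tx$ with $x\in X(\pi_1)$ and $t\in\frak e_7$, which lies in $\sigma_2(\XX(\pi_1))$ by Lemma~\ref{L2w}, and show that its rank is $\ge 3$. The essential structural input is that $V(\pi_1)$ of $E_7$ is $56$-dimensional and carries an invariant non-degenerate \emph{symplectic} form together with an invariant quartic form; equivalently, $E_7\subset Sp_{56}$. This gives us linear-algebra invariants attached to a vector of $\sigma_2(\XX(\pi_1))$ that can be computed and compared against the list of invariants realized by genuine rank-$\le 2$ elements.

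\medskip

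First I would fix a Cartan and Borel subalgebra and choose the highest weight vector $v^{\pi_1}$, together with the lowest weight vector $v^{w_0\pi_1}$; these two span a copy of the standard $SL_2$-module for the root $\beta$ with $w_0\pi_1=\pi_1-\sum c_i\alpha_i$ realized as a single root chain, and a generic element $v^{\pi_1}+v^{w_0\pi_1}$ of $\XX_2(\pi_1)$ is (up to scaling) the normal form of a border-rank-$2$ element. Next I would pick $t\in\frak e_7$ in a carefully chosen open set $U$ and examine $\omega_t:=v^{\pi_1}+tv^{\pi_1}$, which by Lemma~\ref{L2w} satisfies $\underline{\rk}\,\omega_t\le 2$. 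As in Lemma~\ref{Lf4w}, the point is to show that the $Sp_{56}$-invariants of $\omega_t$ — the ranks/dimensions of the various auxiliary bilinear or trilinear forms one can build from $\omega_t$ using the symplectic form and the quartic invariant — cannot coincide with the invariants of any $g_1v^{\pi_1}+g_2v^{\pi_1}$. The $E_6$-graded structure $V(\pi_1)|_{E_6}\cong V_1\oplus V_{27}\oplus V_{27}^*\oplus V_1$ (the $\ZZ$-grading coming from the node $\alpha_1$) is the natural bookkeeping device: placing $v^{\pi_1}$ in the top piece $V_1$, the element $tv^{\pi_1}$ spreads over the lower-degree pieces, and one reads off the invariants degree by degree, just as the $F_4$ argument used the $D_5$-grading of $V_{27}$.

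\medskip

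Concretely, I expect the argument to run: (1) show that for $t$ in a nonempty open $U\subset\frak e_7$ one has $[tv^{\pi_1}]\ne[v^{\pi_1}]$, so $\omega_t\notin X(\pi_1)\cup\{0\}$; (2) build an explicit polynomial $P$ on $\frak e_7$ whose vanishing locus $Z_P$ parametrizes the $t$ for which some chosen invariant of $\omega_t$ drops to a value \emph{not} attained on $\XX_2(\pi_1)$ — the analogue of the rank-$1$ condition on the Gram-type matrix in Lemma~\ref{Lf4w}; (3) check $P\not\equiv 0$ by a weight/irreducibility argument (the analogue of Lemma~\ref{Lirr}: the relevant polynomials $P_{xx}(t)=(\text{symplectic or quartic pairing of }tv^{\pi_1}\text{ with itself})$ are nonzero because $\dim E_7v^{\pi_1}=\dim X(\pi_1)$ is too large for $\frak e_7 v^{\pi_1}$ to be contained in an isotropic or degenerate subspace, and they sit in distinct weight spaces); (4) pick $t\in U\cap Z_P$ generic and conclude that $\omega_t$ is exceptional, hence $V(\pi_1)$ of $E_7$ is $2$-wild.

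\medskip

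The main obstacle, as in the $F_4$ case, will be step (2)–(3): isolating the \emph{right} invariant. For $F_4\subset SO_{26}$ the relevant invariant was simply the rank of the symmetric form restricted to $\Ima\omega$, and the list of attainable $(\dim\Ima,\rk\Ima)$ pairs was short (Lemma~\ref{Ls2}, Lemma~\ref{LemmaF4-partitions}). For $E_7\subset Sp_{56}$ the image-rank of an antisymmetric form is automatically even and carries no information by itself; one must instead use the quartic invariant (or, equivalently, the $E_7$-equivariant quadratic map $V(\pi_1)\to V(\pi_1)$, the "Freudenthal cross product", often written $\psi\mapsto \psi\times\psi$). So I would compute the value of this quadratic map, and of the quartic, on $v^{\pi_1}+g_2v^{\pi_1}$ versus on $\omega_t$, classify the small number of $E_7$-orbit types occurring on $\sigma_2(\XX(\pi_1))$ (which is known to have an open orbit, by the remark after Definition~\ref{Dhwd}; in fact the $E_7$-orbit stratification of $V(\pi_1)$ into rank-$1$, rank-$2$, and higher-rank loci is classical), and exhibit a single invariant separating the tangent vector $\omega_t$ from all of $X(\pi_1)\cup X_2(\pi_1)$. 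This is precisely the step that required the long computation recorded in Lemmas~\ref{Ls2}–\ref{Lf4w} for $F_4$, and I expect the $E_7$ version to be of comparable length.
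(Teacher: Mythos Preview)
Your proposal outlines a plausible strategy, but it is a plan rather than a proof, and the paper takes a genuinely different and considerably shorter route.

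The paper does \emph{not} imitate the $F_4$ argument. Instead, it exploits the fact that $V(\ul\pi_1)$ of $E_7$ can be identified with the degree-$1$ piece $\frak g_1$ of the $\ZZ$-grading on $\frak e_8$ induced by $\pi_1$ (a $\theta$-representation in the sense of Vinberg). The distinguishing invariant is then simply the dimension of the $E_8$-orbit of an element, viewed as a nilpotent element of $\frak e_8$. By Lemma~\ref{Lzak?}, any rank-$2$ element is conjugate to $v^{\alpha_1}+v^{\alpha_2}$ with $\alpha_1,\alpha_2\in\Delta_1$; a short case analysis on $(\alpha_1,\alpha_2)\in\{1,0,-1\}$ shows such sums lie in nilpotent $E_8$-orbits of type $A_1$, $2A_1$, or $A_2$, of dimensions $58$, $92$, $114$ (Lemma~\ref{Le73}). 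One then writes down three roots $\alpha_1,\alpha_2,\alpha_3\in\Delta_1$ forming, together with $-\pi_1$, a $D_4$ configuration; the sum $v^{\alpha_1}+v^{\alpha_2}+v^{\alpha_3}$ is a distinguished nilpotent of type $3A_1$, whose $E_8$-orbit has dimension $112\notin\{58,92,114\}$. Since Zak shows $\sigma_2(\XX(\ul\pi_1))=\PP(V(\ul\pi_1))$, \emph{every} vector has border rank $\le2$, so this element is exceptional.

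Two remarks on your plan. First, you overlook the crucial simplification $\sigma_2(\XX)=\PP(V)$: there is no need for the tangent-line construction $v+tv$ at all --- any vector of rank $\ge3$ is automatically exceptional. Second, your approach via the symplectic form and the quartic (the Freudenthal structure) is in principle viable, but you have not actually carried out step~(2)--(3), which you yourself identify as the crux; by contrast, the $E_8$-orbit-dimension invariant reduces the whole question to a table lookup in \cite{CM}. Your route would likely work, but it trades a short argument relying on known nilpotent-orbit data for a longer direct computation with the Freudenthal cross product.
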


The specific notation used in this section is motivated by the guest appearance of the adjoint module of $E_8$ in the proof. We set $G=E_8$ and $\ul G=E_7$. By $\frak g, \frak h, \pi_1, ...$ and so on we denote attributes of $E_8$ and by $\ul{\frak g},\ul{\frak h},\ul{\pi_1}, ...$ we denote the corresponding attributes of $\ul G=E_7$. We refer to \cite[Table~1~on~p.~293--295]{VO} for the roots and fundamental weights of $E_8$. Note that the fundamental weight $\pi_1$ of $E_8$ coincides with the highest root, i.e. $V(\pi_1)\cong {\mk e}_8$ is the adjoint representation of $E_8$. The idea of the proof of Proposition~\ref{LE7} is to identify $V(\ul{\pi_1})$ of $E_7$ with some subspace of $\frak e_8$ and then prove the following two lemmas.

\begin{lemma}\label{Le7x2}a) For any $x\in X(\ul{\pi_1})$ we have $\dim E_8x=58$.\\b)  For any $x\in X_2(\ul{\pi_1})$ we have $\dim E_8x\in\{58, 92, 114\}$.\end{lemma}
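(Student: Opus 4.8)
The plan is to realize $V(\ul{\pi_1})$ as one of the graded pieces of the contact grading of $\frak e_8$ and then extract the $E_8$-orbits from the root combinatorics. First I would recall the $\ZZ$-grading $\frak e_8 = \frak g_{-2}\oplus\frak g_{-1}\oplus\frak g_0\oplus\frak g_1\oplus\frak g_2$ attached to the subalgebra $\frak e_7\oplus\frak{sl}_2\subset\frak e_8$: one has $\frak g_0 = \frak e_7\oplus\FF H$, $\frak g_{\pm2} = \FF e_{\pm\theta}$ with $\theta$ the highest root, and $\frak g_{\pm1}\cong V(\ul{\pi_1})$ as $\frak e_7$-modules; under this identification I take $V(\ul{\pi_1}) = \frak g_1$. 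Each graded piece is a sum of $\frak h$-root spaces, so $\frak g_1 = \bigoplus_{\gamma\in\Gamma}\frak e_8^\gamma$ for the set $\Gamma$ of the $56$ roots of grading $1$, and every element of $\frak g_1$ is nilpotent, since it lies in the nilradical of the parabolic $\frak g_{\geq0}$. The decisive fact is that $V(\ul{\pi_1})$ is minuscule: all its weights lie in the single orbit $\mc W\cdot\ul{\pi_1}$ and are multiplicity free, and since the restriction to $\frak h_{\frak e_7}$ carries $\Gamma$ bijectively onto the weight set of $V(\ul{\pi_1})$, the weight space of weight $w\ul{\pi_1}$ is exactly one root space $\frak e_8^{\gamma(w)}$. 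Hence each extreme weight vector $v^{w\ul{\pi_1}}$ is a nonzero multiple of a root vector $e_{\gamma(w)}$ of $\frak e_8$. Finally, a nonzero scalar multiple of a nilpotent element is $E_8$-conjugate to it (act by the semisimple member of an $\frak{sl}_2$-triple), so scaling never changes $\dim E_8 x$.

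For part a): $X(\ul{\pi_1})\setminus\{0\}$ is the $E_7$-orbit of a highest weight vector, which under the identification above is a scalar multiple of a single root vector $e_\mu$, $\mu\in\Gamma$, hence $E_8$-conjugate to $e_\mu$. Every root vector of $\frak e_8$ lies in the minimal nilpotent orbit $\mc O_{\min}$, whose dimension is $2h^\vee(\frak e_8)-2 = 2\cdot30-2 = 58$. Therefore $\dim E_8 x = 58$ for all $x\in X(\ul{\pi_1})$.

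For part b): let $x\in X_2(\ul{\pi_1})$. By Lemma~\ref{Lzak?}, part b), $x$ is $E_7$-conjugate to a nonzero multiple of $v^{\ul{\pi_1}}+v^{w\ul{\pi_1}}$ for some $w$ with $w\ul{\pi_1}\neq\ul{\pi_1}$; by the remarks above this equals $e_\mu+e_\nu$ for two distinct roots $\mu,\nu\in\Gamma$, and scaling is harmless. As $\frak e_8$ is simply-laced and $\mu\neq\pm\nu$ (both have grading $1$), we have $\langle\mu,\nu^\vee\rangle\in\{-1,0,1\}$, and I would treat the three cases. If $\langle\mu,\nu^\vee\rangle=1$, then $\mu-\nu$ is a root, $\{\mu,\nu,\mu-\nu\}$ spans an $A_2$-subsystem, and inside the corresponding $\frak{sl}_3$ the element $e_\mu+e_\nu$ is a rank-one nilpotent, hence conjugate to a root vector of $\frak e_8$; so $\dim E_8 x = 58$. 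If $\langle\mu,\nu^\vee\rangle=0$, then neither $\mu+\nu$ nor $\mu-\nu$ is a root, so $[e_\mu,e_\nu]=0$ and $e_\mu+e_\nu$ is a sum of commuting root vectors for orthogonal roots, i.e. a nilpotent of type $2A_1$; the orbit $\mc O_{2A_1}(\frak e_8)$ has dimension $92$. If $\langle\mu,\nu^\vee\rangle=-1$, then $\mu+\nu$ is a root of grading $2$, hence $\mu+\nu=\theta$, and $\{\mu,\nu,\theta\}$ spans an $A_2$-subsystem with simple roots $\mu,\nu$, so $e_\mu+e_\nu$ is the regular nilpotent of the corresponding $\frak{sl}_3$, a nilpotent of type $A_2$; the orbit $\mc O_{A_2}(\frak e_8)$ has dimension $114$. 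In every case $\dim E_8 x\in\{58,92,114\}$.

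I expect the main obstacle to be the bookkeeping in two places: first, the identification of the extreme weight vectors of $V(\ul{\pi_1})$ with the root vectors of $\frak e_8$ — this relies crucially on $V(\ul{\pi_1})$ being minuscule and on the graded pieces of the contact grading being sums of root spaces; second, matching each inner-product configuration to the correct $E_8$-nilpotent orbit and its dimension, which I would do by citing the Bala--Carter classification of nilpotent orbits of $\frak e_8$ (the orbits labelled $A_1$, $2A_1$, $A_2$ have dimensions $58$, $92$, $114$). A minor point worth stating explicitly is that $X_2(\ul{\pi_1})$ need not be a single $E_7$-orbit, but Lemma~\ref{Lzak?} reduces all of its elements uniformly to the form $e_\mu+e_\nu$, so no further $E_7$-orbit analysis is needed.
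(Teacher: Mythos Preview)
Your proposal is correct and follows essentially the same approach as the paper: both identify $V(\ul{\pi_1})$ with the degree-$1$ piece of the $\ZZ$-grading of $\frak e_8$, use Lemma~\ref{Lzak?} to reduce rank-$2$ elements to sums of two root vectors, and then determine the $E_8$-orbit dimension by a case split on the inner product of the two roots, appealing to the nilpotent orbit tables for $\frak e_8$. The paper packages the root case analysis as a separate lemma (Lemma~\ref{Le73}) and cites \cite{CM} for the dimensions, whereas you invoke Bala--Carter directly and supply the formula $2h^\vee-2=58$ for the minimal orbit, but the substance is identical.
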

\begin{lemma}\label{Le7rk3}There exists $x\in V(\ul{\pi_1})$ such that $\dim E_8x=112$.\end{lemma}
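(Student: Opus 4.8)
The plan is to realize $V(\ul{\pi_1})$, the $56$-dimensional $E_7$-module, as a graded piece of $\mathfrak{e}_8$ and then to compute the dimension of one well-chosen $E_8$-orbit by a centralizer count adapted to the grading. Let $E_7\times SL_2\subset E_8$ be the maximal-rank regular subgroup (with Lie algebra $\mathfrak{e}_7\oplus\mathfrak{sl}_2$), and let $h_0$ be the semisimple generator of the $SL_2$-factor normalized so that $\mathrm{ad}(h_0)$ has eigenvalues $0,\pm1,\pm2$ on $\mathfrak{e}_8$. This produces a $\ZZ$-grading
$$
\mathfrak{e}_8=\mathfrak{g}_{-2}\oplus\mathfrak{g}_{-1}\oplus\mathfrak{g}_0\oplus\mathfrak{g}_1\oplus\mathfrak{g}_2,
$$
with $\mathfrak{g}_0=\mathfrak{e}_7\oplus\FF h_0$, $\mathfrak{g}_{\pm1}\cong V(\ul{\pi_1})$ as $E_7$-modules, and $\dim\mathfrak{g}_{\pm2}=1$; identify $V(\ul{\pi_1})$ with $\mathfrak{g}_1$. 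Every $x\in\mathfrak{g}_1$ is $\mathrm{ad}$-nilpotent in $\mathfrak{e}_8$, so $\dim E_8x=248-\dim\mathfrak{c}_{\mathfrak{e}_8}(x)$, where $\mathfrak{c}_{\mathfrak{e}_8}(x)$ denotes the centralizer, and it is enough to exhibit one $x$ with $\dim\mathfrak{c}_{\mathfrak{e}_8}(x)=136$.

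Next I would record the bracket maps between the graded pieces, which by Schur's lemma are unique up to scalars: $\mathfrak{g}_1\times\mathfrak{g}_1\to\mathfrak{g}_2$ is a nonzero multiple of the symplectic form $\omega$ on $V(\ul{\pi_1})$; the map $\mathfrak{g}_{-1}\times\mathfrak{g}_1\to\mathfrak{g}_0=\mathfrak{e}_7\oplus\FF h_0$ is the moment map $\mu=(\mu_{\mathfrak{e}_7},\omega)$, whose $\mathfrak{e}_7$-component $\mu_{\mathfrak{e}_7}(\cdot,x)\colon V(\ul{\pi_1})\to\mathfrak{e}_7$ is, via the invariant forms, adjoint to the action map $\eta\mapsto\eta x$; $\mathfrak{g}_0$ acts on $\mathfrak{g}_1$ through the $E_7$-action with $h_0$ acting by the scalar $1$; and $\mathfrak{g}_{-2}\times\mathfrak{g}_1\to\mathfrak{g}_{-1}$ is nonzero for $x\neq0$. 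Decomposing $\xi\in\mathfrak{c}_{\mathfrak{e}_8}(x)$ into graded components and solving $[\xi,x]=0$ degree by degree gives $\xi_{-2}=0$; $\xi_2$ free; $\xi_1\in x^{\perp_\omega}$; $\xi_0=\eta+ch_0$ with $\eta$ in the line stabilizer $\mathfrak{p}_{[x]}=\{\eta\in\mathfrak{e}_7:\eta x\in\FF x\}$ and $c$ then determined; and $\xi_{-1}\in\ker\mu(\cdot,x)=(\mathfrak{e}_7x)^{\perp_\omega}\cap x^{\perp_\omega}$. Hence
$$
\dim\mathfrak{c}_{\mathfrak{e}_8}(x)=56+\dim\mathfrak{p}_{[x]}+\dim\bigl((\mathfrak{e}_7x)^{\perp_\omega}\cap x^{\perp_\omega}\bigr),
$$
and it suffices to produce $x$ with $\dim\mathfrak{p}_{[x]}=79$: then $\dim\mathfrak{e}_7x\ge\dim E_7[x]=133-79=54$, so the last summand is at most $56-\dim\mathfrak{e}_7x\le2$, hence $\dim\mathfrak{c}_{\mathfrak{e}_8}(x)\in\{135,136,137\}$ and $\dim E_8x\in\{111,112,113\}$; since a nilpotent orbit has even dimension, $\dim E_8x=112$, which is what we want (and, lying outside $\{58,92,114\}$, forces $\rk x\ge3$ by Lemma~\ref{Le7x2}).

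It remains to exhibit such an $x$, and I would take $x=v^{\ul{\pi_1}}+t\,v^{\ul{\pi_1}}$ for a concrete $t\in\mathfrak{e}_7$, so that $[x]\in\sigma_2(\XX(\ul{\pi_1}))$ automatically by Lemma~\ref{L2w} --- this membership is what makes $x$ usable in the proof of Proposition~\ref{LE7}. The $E_7\times GL_1$-action on $V(\ul{\pi_1})$ has exactly five orbits, $\{0\}$ and four nonzero ones with $\ol{O_1}\subset\ol{O_2}\subset\ol{O_3}\subset\ol{O_4}$, where $\dim O_3=55$ and $O_4=\{q\neq0\}$ for the quartic relative invariant $q$; the condition $\dim\mathfrak{p}_{[x]}=79$ is exactly the requirement that $x\in O_3$, i.e. that $q(x)=0$ while $x\notin\ol{O_2}$. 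Checking this for an explicit $t$ is where the real work lies, and I expect it to be the main obstacle: one must find $t$, a sum of a few root vectors of $\mathfrak{e}_7$, such that the tangent vector $v^{\ul{\pi_1}}+t\,v^{\ul{\pi_1}}$ to $\XX(\ul{\pi_1})$ at $[v^{\ul{\pi_1}}]$ has $\mathfrak{e}_7$-line-stabilizer of dimension exactly $79$ and is annihilated by $q$. Writing $v^{\ul{\pi_1}}=e_\beta$ for the appropriate root $\beta$ of $E_8$ and choosing the roots supporting $t$ so that all relevant brackets are controlled by a small subsystem, I would read off $\mathfrak{e}_7x$, $x^{\perp_\omega}$, and the value of $q$ at $x$ directly from that subsystem; a dimension comparison with the classification of nilpotent $E_8$-orbits then pins the orbit of $x$ to the one of dimension $112$.
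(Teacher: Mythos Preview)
Your centralizer decomposition is correct and, in fact, sharper than you state: since $(\mathfrak{e}_7x)^{\perp_\omega}\cap x^{\perp_\omega}=(\mathfrak{e}_7x+\FF x)^{\perp_\omega}=(\mathfrak{g}_0\cdot x)^{\perp_\omega}$ and $\dim\mathfrak{g}_0\cdot x=134-\dim\mathfrak{p}_{[x]}$, the last summand equals $\dim\mathfrak{p}_{[x]}-78$ exactly, so $\dim\mathfrak{p}_{[x]}=79$ already gives $\dim\mathfrak{c}_{\mathfrak{e}_8}(x)=1+55+79+1+0=136$ on the nose, without the parity trick. Moreover, your detour through $x=v^{\ul{\pi_1}}+t\,v^{\ul{\pi_1}}$ is unnecessary: the paper recalls just before the lemma that $\sigma_2(\XX(\ul{\pi_1}))=\PP(V(\ul{\pi_1}))$, so \emph{every} $x$ lies in $\sigma_2$ and is ``usable'' in Proposition~\ref{LE7}. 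Given that you already invoke the $E_7\times\FF^\times$-orbit classification to name $O_3$, you could simply take any $x\in O_3$ and be done --- but you never actually do this, and instead leave the construction of $t$ as an unresolved ``main obstacle''. As written, the proof is incomplete.

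The paper's argument avoids all of this machinery. It observes that since all roots of $E_8$ are conjugate, one can choose roots $\alpha_1,\alpha_2,\alpha_3$ so that $(-\pi_1,\alpha_1,\alpha_2,\alpha_3)$ is a $D_4$ system of simple roots; the conditions $(-\pi_1,\alpha_i)=-1$ force $\alpha_i\in\Delta_1$, hence $v^{\alpha_i}\in\mathfrak{g}_1=V(\ul{\pi_1})$, while $(\alpha_i,\alpha_j)=0$ for $i\ne j$ makes $x:=v^{\alpha_1}+v^{\alpha_2}+v^{\alpha_3}$ the distinguished nilpotent of a Levi of type $3A_1$. The dimension of this nilpotent $E_8$-orbit is then read off from the standard tables (Collingwood--McGovern) as $112$. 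This is a three-line construction with no centralizer bookkeeping, no invocation of the $E_7$-orbit structure on $\FF^{56}$, and no search for a tangent direction $t$.
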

It is known that
$\sigma_2(X(\ul{\pi_1}))=V(\ul{\pi_1})$~\cite[Ch. III, Thm. 1.4]{Zak-Book}.
Therefore $V(\ul{\pi_1})$ is r-discontinuous if and only if there exists $x\in
V(\ul{\pi_1})$ such that $$x\not\in X_2(\ul{\pi_1})\cup
X(\ul{\pi_1})\cup0.$$ According to Lemma~\ref{Le7x2} and
Lemma~\ref{Le7rk3} such elements $x\in V(\ul{\pi_1})$ exist and
hence Lemmas~\ref{Le7x2} and~\ref{Le7rk3} imply
Proposition~\ref{LE7}. We now present some explanation of Lemma~\ref{Le7x2} and
Lemma~\ref{Le7rk3} and then proceed with their proofs.

We note the amazing fact that the $V(\ul{\pi_1})$ has only finitely
many $\ul G$-orbits and we wish to say some words about it (see e.g.
\cite{Vi}). A description of the $E_7$-orbits on $\FF^{56} (\dim
V(\ul{\pi_1})=56)$ appears in~\cite{Har}. The idea
of the description used here comes from~\cite{BaCa} and is
related to the description of $\frak{sl}_2$-triples in exceptional
groups due to~\cite{Dyn}. There is a recently developed software,
which allows, in principle, to solve such problems~\cite{GVY}.

In our proof of 2-discontinuity of $V(\ul{\pi_1})$ we use the fact that
$V(\ul{\pi_1})$ is the 1-grading component of some grading of $\frak
e_8$ (representations which arise in such a way are called
{$\theta$-representations}, see~\cite{Vi},~\cite{Kac-1980}). We need
more notation related to $\theta$-representations.

For any $t\in\frak h^*$ we denote by $\frak g_t\subset\frak g$ the
corresponding weight space (we note that $\frak g_t\ne0$ if and only
if $t\in\Delta\cup0$). We identify $\frak h$ and $\frak h^*$ via the
Cartan-Killing form and thus consider fundamental weights $\pi_i$ as
elements of $\frak h^*$. We set
$$
\Delta_i:=\{\alpha\in\Delta\cup0\mid (\alpha, \pi_1)=i\}\;,\;\;\frak g_i:=\bigoplus\limits_{t\in\Delta_i}\frak g_t~\;(i\in\FF).
$$
The spaces $\{\frak g_i\}_{i\in\mathbb F}$ form a grading of $\frak g$. The space $\frak
g_0$ is a Lie algebra and it acts in a natural way on $\frak g_i$
for any $i\in\mathbb F$. By definition, a {\it
$\theta$-representation} is the representation of $\frak g_0$ on $\frak
g_1$.

We have
$$
\frak g_i=0 \;\; {\rm if}\;\; i\not\in\{-2, -1, 0, 1,2\}\;,\;\;\frak g_0\cong\frak e_7\oplus\FF,
$$
$$
\dim\frak g_2=\dim\frak g_{-2}=1\;,\;\;\dim\frak g_1=\frak g_{-1}=56\;,\;\;\dim\frak g_0=134.
$$
We identify $\ul{\frak g}={\mk e}_7$ with $[\frak g_0, \frak g_0]$. As $\frak e_7$-modules both $\frak g_1$ and $\frak g_{-1}$ are isomorphic to $V(\ul{\pi_1})$. Further, we identify $V(\ul{\pi_1})$ with $\frak g_1$.

The following lemma plays a key role in the proof of Lemma~\ref{Le7x2}.
\begin{lemma}\label{Le73}Let $\alpha_1, \alpha_2$ be roots of $E_8$ such that $\alpha_1\ne-\alpha_2$. Then $v^{\alpha_1}+v^{\alpha_2}$ is a nilpotent element and $\dim E_8(v^{\alpha_1}+v^{\alpha_2})\in\{58, 92, 114\}$.\end{lemma}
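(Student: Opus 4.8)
The plan is to recognize $v^{\alpha_1}+v^{\alpha_2}$ as a nilpotent element lying in the small reductive subalgebra of $\frak e_8$ generated by the root spaces for $\pm\alpha_1$ and $\pm\alpha_2$, and then to read off its $E_8$-orbit from the classification of nilpotent orbits. The whole argument is organized around the integer $k:=(\alpha_1,\alpha_2)$, where $(\cdot,\cdot)$ is the $\mc W$-invariant form normalized so that roots have squared length $2$. Since $E_8$ is simply laced, $|k|\le 2$ by Cauchy--Schwarz, with $|k|=2$ only when $\alpha_1=\pm\alpha_2$; so the hypothesis $\alpha_1\ne-\alpha_2$ leaves exactly the four cases $k\in\{2,1,0,-1\}$.

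First, nilpotency. If $\alpha_1=\alpha_2$ then $v^{\alpha_1}+v^{\alpha_2}=2v^{\alpha_1}$ is a root vector, hence nilpotent. If $\alpha_1\ne\pm\alpha_2$, choose a linear functional $\ell$ on the real span of the roots with $\ell(\alpha_1)>0$, $\ell(\alpha_2)>0$ and $\ell$ nonvanishing on $\Delta$; such $\ell$ exists because the rays through $\alpha_1$ and $\alpha_2$ are distinct and not opposite, so $\{\ell:\ell(\alpha_1)>0,\ \ell(\alpha_2)>0\}$ is a nonempty open cone from which we delete only finitely many hyperplanes. The system of positive roots determined by $\ell$ contains $\alpha_1$ and $\alpha_2$, so $v^{\alpha_1}+v^{\alpha_2}$ lies in the nilradical of the corresponding Borel subalgebra and is nilpotent.

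Next I claim the orbit dimension depends only on $k$. The Weyl group $\mc W(E_8)$ is transitive on ordered pairs of roots with a prescribed inner product: it is transitive on roots, and for a fixed root $\alpha$ its stabilizer acts transitively on $\{\beta\in\Delta:(\alpha,\beta)=m\}$ for each $m$ (this set is $\{\alpha\}$ for $m=2$, the $E_7$-subsystem $\alpha^\perp\cap\Delta$ for $m=0$, and a $56$-element $\mc W(E_7)$-orbit for $m=\pm1$, these $56$ roots being the weights of a minuscule $E_7$-module). Lifting a suitable Weyl element to $N_{E_8}(H)$ and adjusting by an element of $H$, which rescales the two root vectors independently when $\alpha_1\ne\alpha_2$, shows that any two sums $v^{\alpha_1}+v^{\alpha_2}$ with the same $k$ are $E_8$-conjugate. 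It then remains to identify the orbit in each case. For $k=2$, $v^{\alpha_1}+v^{\alpha_2}=2v^{\alpha_1}$ lies in the minimal nilpotent orbit. For $k=1$, $\gamma:=\alpha_1-\alpha_2\in\Delta$ while $2\alpha_1-\alpha_2,\ 2\alpha_2-\alpha_1\notin\Delta$, so $\mathrm{ad}(v^\gamma)v^{\alpha_1}=0$ and $\mathrm{ad}(v^\gamma)v^{\alpha_2}=c\,v^{\alpha_1}$ with $c\ne0$; hence $\exp(s\,\mathrm{ad}\,v^\gamma)(v^{\alpha_1}+v^{\alpha_2})=(1+sc)v^{\alpha_1}+v^{\alpha_2}$, and taking $s=-1/c$ shows $v^{\alpha_1}+v^{\alpha_2}$ is $E_8$-conjugate to the root vector $v^{\alpha_2}$, again in the minimal orbit. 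For $k=0$, $\alpha_1\pm\alpha_2\notin\Delta$, so $[v^{\alpha_1},v^{\alpha_2}]=0$ and $v^{\alpha_1}+v^{\alpha_2}$ is a regular nilpotent element of the subalgebra of type $A_1\times A_1$ spanned by the root spaces for $\pm\alpha_1,\pm\alpha_2$; this is the orbit with Bala--Carter label $2A_1$. For $k=-1$, $\alpha_1+\alpha_2\in\Delta$ and $[v^{\alpha_1},v^{\alpha_2}]\ne0$, so $v^{\alpha_1}+v^{\alpha_2}$ is a regular nilpotent of the subalgebra of type $A_2$ with simple roots $\alpha_1,\alpha_2$; this is the orbit labelled $A_2$. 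The dimensions of the $E_8$-orbits $A_1$, $2A_1$, $A_2$ are $58$, $92$, $114$ respectively (standard; see e.g.~\cite{Dyn} and the tables of nilpotent orbits). Hence $\dim E_8(v^{\alpha_1}+v^{\alpha_2})\in\{58,92,114\}$.

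The main point requiring care is the bookkeeping in the last step: establishing that the $E_8$-orbit really depends only on $k$ (the $\mc W(E_8)$-transitivity on pairs of roots at fixed inner product) and correctly matching the four root-configurations with the nilpotent orbits $A_1$, $2A_1$, $A_2$ and their dimensions. Everything else is routine structure theory of $E_8$.
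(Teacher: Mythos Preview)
Your proof is correct and follows essentially the same approach as the paper: a case analysis on $(\alpha_1,\alpha_2)\in\{2,1,0,-1\}$, identifying $v^{\alpha_1}+v^{\alpha_2}$ as a nilpotent element in the rank $\le2$ root subalgebra generated by $\pm\alpha_1,\pm\alpha_2$, matching it to the Bala--Carter labels $A_1$, $2A_1$, $A_2$, and reading the orbit dimensions $58$, $92$, $114$ from the tables in~\cite{CM}. Your treatment is somewhat more detailed---you supply an explicit nilpotency argument via a choice of positive system and an explicit unipotent conjugation for the $k=1$ case---and you add the observation that $\mc W(E_8)$ acts transitively on pairs of roots with fixed inner product; the paper omits this last point since the Bala--Carter label is already determined by the subalgebra type, making the transitivity statement logically redundant (though not incorrect).
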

\begin{proof} If $\alpha_1=\alpha_2$, then $v^{\alpha_1}+v^{\alpha_2}$ is conjugate to $v^{\alpha_1}$. The nilpotent element $v^{\alpha_1}$ is a generic nilpotent element of the corresponding Levi subalgebra with semisimple part isomorphic to $A_1$. Therefore $\dim E_8(v^{\alpha_1}+v^{\alpha_2})=58$.

If $\alpha_1\ne\pm\alpha_2$, the vector
$v^{\alpha_1}+v^{\alpha_2}$ is a nilpotent element of the Lie
algebra $\frak l_{\alpha_1, \alpha_2}$ corresponding to the root
system generated by $\alpha_1, \alpha_2$. We have three
possibilities: $(\alpha_1, \alpha_2)=1$, $(\alpha_1, \alpha_2)=0$,
$(\alpha_1, \alpha_2)=-1$. In the first and third cases, we have
$\frak l_{\alpha_1, \alpha_2}\cong \frak{sl}_3=A_2$. In the second
case, we have $\frak l_{\alpha_1,
\alpha_2}\cong\frak{sl}_2\oplus\frak{sl}_2=2A_1$. For any of these
Lie algebras of rank 2 it is easy to check that:

1) if $(\alpha_1, \alpha_2)=1$, then $v^{\alpha_1}+v^{\alpha_2}$ is
conjugate in $\frak l_{\alpha_1, \alpha_2}$ to $v^{\alpha_1}$ (and
therefore to $v^{\alpha_2})$, and thus is a distinguished nilpotent
element for some root subalgebra $A_1$,

2) if $(\alpha_1, \alpha_2)=0$, then $v^{\alpha_1}+v^{\alpha_2}$ is
a distinguished nilpotent element of $\frak l_{\alpha_1,
\alpha_2}\cong 2A_1$,

3) if $(\alpha_1, \alpha_2)=-1$, then $v^{\alpha_1}+v^{\alpha_2}$ is
a distinguished nilpotent element of $\frak l_{\alpha_1,
\alpha_2}\cong A_2$.

Hence $\dim E_8(v^{\alpha_1}+v^{\alpha_2})=58, 92, 114$,
respectively, for cases 1, 2, 3, see~\cite[8.4, Table: nilpotent
elements for $E_8$]{CM}.\end{proof}

\begin{proof}[Proof of Lemma~\ref{Le7x2}]For any weight
$\alpha\in\Delta_1$ and any $v^\alpha\in\frak g_\alpha$ we
have\begin{center}$v^\alpha\in$V$(\ul{\pi_1})$\end{center} and
$v^\alpha$ is a highest weight vector with respect to some choice of
Borel subalgebra of $\ul{\frak g}$,
i.e.\begin{center}$v^\alpha\in X(\ul{\pi_1})$.\end{center} On the
other hand $v^\alpha\in X(\pi_1)$ and thus\begin{center}$\dim
E_8v^\alpha=\dim X(\pi_1)=58$.\end{center}This completes part a).

We proceed to part b). By Lemma~\ref{Lzak?}, any element of
$X_2(\ul{\pi_1})$ is $\ul G$-conjugate to the sum of two weight
vectors. In our case this means that any $x\in X_2(\ul{\pi_1})$ is
$\ul G$-conjugate to $$v^{\alpha_1}+v^{\alpha_2}$$for some
$\alpha_1, \alpha_2\in\Delta_1$. From this statement and
Lemma~\ref{Le73} part b) of Lemma~\ref{Le7x2} follows
immediately.\end{proof}

\begin{proof}[Proof of Lemma~\ref{Le7rk3}] We shall construct an element $x$ with $\dim E_8x=112$. First note that all roots of $E_8$ are conjugate and that the Dynkin diagram of $E_8$ has a unique subdiagram of type $D_4$. Hence there exists roots $\alpha_1,\alpha_2, \alpha_3$ such that the quadruple $$(-\pi_1, \alpha_1,\alpha_2, \alpha_3)$$ is a system of simple roots of Dynkin type $D_4$, i.e.

1) $(-\pi_1, \alpha_i)=-1$ for $i=1, 2, 3$,

2) $(\alpha_i, \alpha_j)=0$ for $i,j\in\{1, 2, 3\}$, $i\ne j$.

Condition 1) means that $\alpha_1, \alpha_2, \alpha_3\in\Delta_1$.
The element $v^{\alpha_1}+v^{\alpha_2}+v^{\alpha_3}$ is a
distinguished element of $3A_1$, where by $3A_1$ we denote the
subgroup of $G$ corresponding to the root subsystem
$$\cup_i\{-\alpha_i, \alpha_i\}\subset\Delta.$$ Therefore $\dim
E_8(v^{\alpha_1}+v^{\alpha_2}+v^{\alpha_3})=112$, see~\cite[8.4,
Table: nilpotent elements for~$E_8$]{CM}. Hence, for
$x=(v^{\alpha_1}+v^{\alpha_2}+v^{\alpha_3})\in V(\ul{\pi_1})$, we
have $\dim E_8x=112$. This completes the proof.\end{proof}

\subsection*{Acknowledgements}
The first author wishes to thank Ruhr-Universit\"at Bochum for a
two-month stay in a very friendly environment. This project was
started at this university with the support of the grant SFB/TR12.
We would like to thank A.~Elashvili, P.~Heinzner, A.~Huckleberry, K.~Ranestad,
A.~Sawicki, D.~Timashev for stimulating discussions. We also thank the anonymous reviewer for helpful suggestions.
\bibliographystyle{plain}

\end{document}